\documentclass[11pt]{preprint}
\usepackage[full]{textcomp}
\usepackage[osf]{newtxtext}
\usepackage{comment}

\usepackage{dsfont}
\usepackage{amssymb}
\usepackage{mathtools}
\usepackage{hyperref}
\usepackage{breakurl}
\usepackage{mhenvs}

\usepackage{mhequ}
\usepackage{mhsymb}
\usepackage{booktabs}
\usepackage{tikz}
\usepackage{textgreek}
\usepackage{listings}
\usepackage{bm}
\usepackage{orcidlink}

\newcommand{\difffrac}[2]{\mathchoice%
	{\dfrac{#1}{#2}}
	{\frac{#1}{#2}}
	{\frac{#1}{#2}}
	{\frac{#1}{#2}}
}

\usepackage{mathrsfs}
\usepackage{longtable}
\usepackage{graphicx,caption}
\usepackage{subcaption}

\usepackage{float}
\usepackage{microtype}
\usepackage{centernot}
\usepackage{enumitem}
\usepackage{stackrel}
\usepackage[utf8]{inputenc}

\usepackage[T1]{fontenc}
\usepackage{emptypage}
\usepackage{color}

\usepackage{accents}
\newcommand{\ubar}[1]{\underaccent{\bar}{#1}}

\DeclareMathAlphabet{\mcb}{U}{BOONDOX-calo}{m}{n}
\SetMathAlphabet{\mcb}{bold}{U}{BOONDOX-calo}{b}{n}

\DeclareSymbolFont{timesoperators}{T1}{ptm}{m}{n}
\SetSymbolFont{timesoperators}{bold}{T1}{ptm}{b}{n}

\makeatletter
\newcommand*\bcdot{\mathpalette\bcdot@{.5}}
\newcommand*\bcdot@[2]{\mathbin{\vcenter{\hbox{\scalebox{#2}{$\m@th#1\bullet$}}}}}
\makeatother

\makeatletter
\renewcommand{\operator@font}{\mathgroup\symtimesoperators}
\makeatother

\colorlet{symbols}{blue!30!black!50}
\definecolor{purple}{rgb}{0.55,0.05,0.8}

\let\oldskull\skull
\def\skull{\mathord{\oldskull}}

\def\crit{{\mathop{\mathrm{crit}}}}

\DeclareMathAlphabet{\mathbbm}{U}{bbm}{m}{n}

\DeclareFontFamily{U}{BOONDOX-calo}{\skewchar\font=45 }
\DeclareFontShape{U}{BOONDOX-calo}{m}{n}{
	<-> s*[1.05] BOONDOX-r-calo}{}
\DeclareFontShape{U}{BOONDOX-calo}{b}{n}{
	<-> s*[1.05] BOONDOX-b-calo}{}

\setlist{noitemsep,topsep=4pt}

\makeatletter 
\newcommand*{\bigcdot}{}
\DeclareRobustCommand*{\bigcdot}{%
	\mathbin{\mathpalette\bigcdot@{}}%
}
\newcommand*{\bigcdot@scalefactor}{.5}
\newcommand*{\bigcdot@widthfactor}{1.15}
\newcommand*{\bigcdot@}[2]{%
	\sbox0{$#1\vcenter{}$}
	\sbox2{$#1\cdot\m@th$}%
	\hbox to \bigcdot@widthfactor\wd2{%
		\hfil
		\raise\ht0\hbox{%
			\scalebox{\bigcdot@scalefactor}{%
				\lower\ht0\hbox{$#1\bullet\m@th$}%
			}%
		}%
		\hfil
	}%
}
\makeatother

\def\symbol#1{\textcolor{symbols}{#1}}
\def\1{\mathbf{\symbol{1}}}

\usepackage{mathalfa}
\DeclareMathAlphabet\mathzapf{T1}{pzc}{mb}{sc}

\def\eqdist{\stackrel{\text{\scriptsize d}}{=}}
\def\eqdef{\stackrel{\text{\tiny def}}{=}}

\def\init{\mathcal{I}}

\usepackage{stmaryrd}
\def\fancynorm#1{{\talloblong #1 \talloblong}}
\SetSymbolFont{stmry}{bold}{U}{stmry}{m}{n} 

\newcommand{\noise}[1]{\llbracket #1 \rrbracket}
\newcommand{\der}[1]{%
	\left\langle\mkern-3.7mu
	\left[#1\right]
	\mkern-3.7mu\right\rangle%
}

\newcommand{\mrd}{\mathrm{d}}
\newcommand{\mri}{\mathrm{i}}
\newcommand{\floor}[1]{\lfloor #1 \rfloor}

\newcommand{\roof}[1]{\lceil #1 \rceil}
\newcommand{\dd}{(2-d)/2}  

\def\bipar{\sigma}   

\colorlet{darkblue}{blue!90!black}
\colorlet{dgray}{green!20!darkgray}
\colorlet{darkgreen}{green!80!black}
\colorlet{dgreen}{darkgreen!90!cyan}
\colorlet{bgreen}{green!80!blue}
\colorlet{ggreen}{darkgreen!70!gray}

\newcommand{\e}{\varepsilon}

\def\${|\!|\!|}

\def\E{\mathbf{E}}
\def\T{\mathbf{T}}

\newcommand{\mfe}{\mathfrak{e}}

\newcommand{\mft}{\mathfrak{t}}

\newcommand{\mre}{\mathrm{e}}

\newcommand{\mcI}{\mathcal{I}}

\newcommand{\mcD}{\mathcal{D}}

\newcommand{\Var}{\mathrm{Var}}
\newcommand{\Cov}{\mathrm{Cov}}

\def\CP{\mathcal{P}}
\def\CC{\mathcal{C}}
\def\CD{\mathcal{D}}
\def\CI{\mathcal{I}}
\def\CR{\mathcal{R}}

\def\CM{\mathcal{M}}

\def\CT{\mathcal{T}}
\def\CS{\mathcal{S}}
\def\CB{\mathcal{B}}

\def\N{\mathbb{N}}

\def\R{\mathbb{R}}
\def\C{\mathbb{C}}
\def\T{\mathbb{T}}
\def\Z{\mathbb{Z}}
\def\P{\mathbb{P}}
\def\E{\mathbb{E}}

\newcommand{\bmbeta}{\bm{\beta}}
\newcommand{\bmdelta}{\bm{\delta}}

\numberwithin{equation}{section}

\def\dash{\leavevmode\unskip\kern0.18em--\penalty\exhyphenpenalty\kern0.18em}
\def\slash{\leavevmode\unskip\kern0.15em/\penalty\exhyphenpenalty\kern0.15em}

\let\emph\textit 

\makeatother

\colorlet{lbluenode}{blue!25}

\tikzset{
	xi/.style={very thin,circle,fill=lbluenode,draw=symbols,inner sep=0pt,minimum size=1.2mm},
	xib/.style={very thin,circle,fill=lbluenode,draw=symbols,inner sep=0pt,minimum size=1.6mm},
	kernels2/.style={ultra thick,draw=symbols},
}

\makeatletter
\def\DeclareSymbol#1#2#3{%
	\expandafter\gdef\csname MH@symb@#1\endcsname{%
		\tikz[baseline=#2,scale=0.15,draw=symbols,line join=round]{#3}}%
	\expandafter\gdef\csname MH@symb@#1s\endcsname{\scalebox{0.75}{%
		\tikz[baseline=#2,scale=0.15,draw=symbols,line join=round]{#3}}}%
	\expandafter\gdef\csname MH@symb@#1ss\endcsname{\scalebox{0.65}{%
		\tikz[baseline=#2,scale=0.15,draw=symbols,line join=round]{#3}}}%
}
\def\<#1>{\ifmmode\mathchoice{\csname MH@symb@#1\endcsname}{\csname MH@symb@#1\endcsname}{\csname MH@symb@#1s\endcsname}{\csname MH@symb@#1ss\endcsname}\else\csname MH@symb@#1\endcsname\fi}
\makeatother

\DeclareSymbol{Xi}{-2.8}{\node[xib] {};}
\DeclareSymbol{I}{0}{\draw (0,2) -- (0,0);}
\DeclareSymbol{I'}{0}{\draw[kernels2] (0,2) -- (0,0);}
\DeclareSymbol{IXi}{-1}{\draw (0,1) node[xi] {} -- (0,-.4);}
\DeclareSymbol{I'Xi}{-1}{\draw[kernels2] (0,1) node[xi] {} -- (0,-0.4) {};}

\title{
Subcritical non-linear heat equations via spectral gap
}
\author{Ilya~Chevyrev$^1$\orcidlink{0000-0002-5630-9694} and Hora~Mirsajjadi$^{2}$}
\institute{SISSA, Trieste, Italy, \email{ichevyrev@gmail.com} 
	\and {School of Mathematics, The University of Edinburgh,
		United Kingdom, \email{h.s.mirsajjadi@sms.ed.ac.uk}}}

\date{\today}
\begin{document}
	
	\maketitle
	
	\begin{abstract}
We establish local well-posedness for a class of non-linear heat equations on the torus $\mathbb{T}^n$ whose highest order non-linearities have scaling-critical dimension $-1/k, \, k\in \mathbb{N}$, when the initial condition is a random field satisfying a spectral gap inequality involving a suitable Lebesgue norm. The corresponding subcriticality condition matches that of the Gaussian free field in dimension $d<2+2/k$. This extends previous subcritical well-posedness results beyond the Gaussian setting. Moreover, our proof is simpler and avoids diagrammatic arguments.
		\\[.4em]
		\noindent {\small \textit{Keywords:} spectral gap inequality, probabilistic well-posedness, non-linear heat equation}\\
		\noindent {\small\textit{MSC classification:} 35R60, 60G15}
	\end{abstract}
	\setcounter{tocdepth}{1}
	
	\tableofcontents
	
	\section{Introduction}
	
	Consider the non-linear partial differential equation (PDE)
	\begin{equs}\label{eq:A_eq}
		\partial_t A &= \Delta A + F(A,DA) \qquad &\text{on } (0,T)\times \T^n\;,
		\\
		A_0 &= X \qquad &\text{on } \T^n\;, \label{eq:A_ic}
	\end{equs}
	posed for a function $A\in \CC^\infty ((0,T)\times \T^n;E)$, where $n\geq 1$, $\T^n=\R^n/\Z^n$ is the $n$-dimensional torus, $E$ is a finite-dimensional real inner product space, and
	the initial condition $A_0 = X$ is understood
	in the sense that $\lim_{t\downarrow 0}A_t = X$ in a suitable space of distributions that we make precise below
	(we use the notation $A_t = A(t,\cdot)$).
	Here $\Delta = \sum_{i=1}^n \partial_i^2$ is the Laplacian on $\T^n$,
	$DA\in \CC^\infty ((0,T)\times \T^n,E^n)$, where $E^j$ denotes the $j$-fold Cartesian product of $E$, is the spatial derivative of $A$, i.e. $(DA)_i = \partial_i A$
	and $F\colon E\times E^n\to E$ is a polynomial of the form
	\begin{equ}[eq:F]
		F(x,y) = \sum_{i=0}^{k} Q_i(x,y) + \sum_{j=0}^{2k+1} P_j(x) \;.
	\end{equ}
	In \eqref{eq:F}, $k\ge1$ is a fixed integer.
	For $i\in\{0,\ldots,k\}$ and $j\in\{0,\ldots,2k+1\}$, the maps
	\[
	Q_i \colon E^{i} \times E^n \to E
	\quad
	\textnormal{ and }
	\quad
	P_j \colon E^{j} \to E
	\]
	are, respectively, $(i+1)$-linear and $j$-linear, with $Q_i$ symmetric in its first $i$ arguments and $P_j$ symmetric in all arguments.
	We use the convention that $E^0 = \{0\}$ is the trivial vector space, so that $Q_0\colon E^n \to E$ is linear and $P_0\in E$ is a constant.
	
	We also adopt the shorthand notation
	\[
	Q_i(x,y)
	=
	Q_i(\underbrace{x,\ldots,x}_{i\textnormal{ times}},y)\;,
	\qquad
	P_j(x)
	=
	P_j(\underbrace{x,\ldots,x}_{j\textnormal{ times}})\;.
	\]
	
	\paragraph{Scaling.}
	We denote the non-linearity of highest possible order by
	\[
	F_\crit(x,y)
	\eqdef
	Q_k(x,y)+P_{2k+1}(x)\;,
	\]
	which is critical at scaling dimension
	$-1/k$ in the following sense.
	If we work on $\R^n$ and let $A$ solve \eqref{eq:A_eq}-\eqref{eq:A_ic},
	then the rescaled function
	\[
	A^\lambda(t,x) = \lambda^{\eta} A(\lambda^{-2}t,\lambda^{-1}x)
	\]
	again solves \eqref{eq:A_eq}-\eqref{eq:A_ic} 
	but with initial condition $X^\lambda(x) = \lambda^{\eta} X(\lambda^{-1}x)$ and
	non-linearity
	\[
	F^\lambda(x,y)
	\eqdef
	\lambda^{\eta-2}F\bigl(\lambda^{-\eta}x,\lambda^{1-\eta}y\bigr)
	=
	\sum_{i=0}^{k}\lambda^{-i\eta-1}Q_i(x,y)
	+
	\sum_{j=0}^{2k+1}\lambda^{(1-j)\eta-2}P_j(x)\;.
	\]
	Therefore, taking $\eta=-1/k$, the $F_\crit$ part of $F$ remains unchanged under the scaling, while every lower-order term in \eqref{eq:F} vanishes as $\lambda\to \infty$.
	
	Equivalently, if we assign to $A$ a `scaling dimension' $\eta\in\R$, then $\Delta A$ has dimension $\eta-2$,
	while
	$P_{j}(A)$ and $Q_j(A,DA)$
	have dimensions $j\eta$ and $(j+1)\eta-1$, respectively.
	Thus $F_\crit = Q_k+P_{2k+1}$ is critical at $\eta=-1/k$ in the sense that it has the same scaling dimension as $\Delta A$ (and every
	lower-order term in \eqref{eq:F} is strictly subcritical).
	
	Consequently, the equation is `subcritical' at scaling dimension $\eta$ provided
	\begin{equ}
		\eta-2<(2k+1)\eta \,\Leftrightarrow\, \eta>-1/k
		\quad\textnormal{and}\quad
		\eta-2<(k+1)\eta-1 \,\Leftrightarrow\, \eta>-1/k\;.
	\end{equ}
	In this subcritical regime, the non-linear term is dominated by the linear term $\Delta A$ at small scales.
	Therefore, one can hope to obtain a well-posedness result for \eqref{eq:A_eq}-\eqref{eq:A_ic} in function spaces whose norms shrink\,/\,are invariant under the transformation $f\mapsto \lambda^{\eta}f(\lambda^{-1}\cdot)$ for $\eta>-1/k$ and $\lambda\geq 1$,
	such as the Besov space $B^{\eta}_{\infty,\infty}$.
	
	In the deterministic setting, however, this does not hold.
	Indeed, \eqref{eq:A_eq}-\eqref{eq:A_ic} is well-posed for $X\in B^{\eta}_{\infty,\infty}$ if $\eta > -1/(k+1)$,
	while for $\eta \le -1/(k+1)$, in the generic case that $Q_k$ is not a total derivative, the equation is expected to be ill-posed in the sense of norm inflation.
	(If $Q_k$ is a total derivative and $P_{2k+1}\not\equiv0$, the threshold moves to $\eta \leq -1/(k+\frac12)$.)
	See \cite{Chevyrev22_norm_inf,COW22} and \cite[Sections~2.1-2.2]{Mirsajjadi_26_thesis},
	for proofs of this claim for $k=1$ or the scalar case with $F(x,y)=\pm x^{2k+1}$.
	
	The well-posedness threshold $\eta > -1/(k+1)$ can be seen from the first Picard iterate, which contains the term
	$\int_0^t \mre^{(t-s)\Delta}F(\mre^{s\Delta}X,D\mre^{s\Delta}X)\mrd s$
	in which the integrand in $L^\infty$ can blow up at rate $s^{\frac{(k+1) \eta  -1}{2}}$ due to $Q_k$;
	for $((k+1)\eta -1)/{2} \leq -1 \iff \eta \leq -1/(k+1)$,
	this produces a non-integrable time singularity.
	(If $Q_k$ is a total derivative, one instead has the bound $s^\frac{(2k+1) \eta}{2}$ due to $P_{2k+1}$.)
	
	There is therefore a gap between the scaling subcritical regime $\eta>-1/k$ and the deterministic well-posedness regime $\eta>-1/(k+1)$.
	Given that ill-posedness is a `worst case' result, it is natural to ask whether one can obtain well-posedness in the scaling subcritical regime $\eta>-1/k$ by taking $X$ from a class of random fields.
	This question also has strong motivation from singular stochastic PDEs (SPDEs), where, close to criticality, starting an equation from its natural regularity requires moving beyond the deterministic well-posedness regime,
	see Section~\ref{sec:motivation}.

	\paragraph{Random fields.}
	We are interested in taking $X$ to be a \emph{random} field.
	In light of the above discussion, a natural way to impose ``scaling
	subcriticality'' is to require that there exist $q_*>nk$ such that $X$
	satisfies a spectral gap inequality with $L^{q^*}$-norm
	(see Definition~\ref{def:SG}), where $q^*$ is the H\"older conjugate of
	$q_*$, namely $\frac{1}{q_*}+\frac{1}{q^*}=1$.
	By duality, the corresponding perturbations are measured in $L^{q_*}$.
	Indeed, the $L^q(\R^n)$-norm is invariant under the scaling
	$f\mapsto\lambda^\eta f(\lambda^{-1}\cdot)$
	when $\eta=-n/q$. In particular, the Lebesgue space $L^{nk}(\R^n)$ is invariant under the critical scaling $\eta=-1/k$, whereas $q_*>nk$ corresponds to the
	subcritical scaling dimension $-n/q_*>-1/k$.

	For the Gaussian free field (GFF) $X$ of dimension $d > 2$ (see \cite[Appendix~A]{CM25} for a definition), the natural scaling dimension is
	$\dd$ in the sense that $X$ and $\lambda^{\dd}X(\lambda^{-1}\cdot)$,
	if defined suitably on $\R^n$,
	have the same law, so this subcriticality condition becomes
	\begin{equ}
		\dd>-1/k
		\,\iff\, d<2+2/k\;.
	\end{equ}
	Indeed, the results of Appendix~\ref{app:SG} and \cite[Appendix A]{CM25} imply that the GFF of dimension $d<2+2/k$ on $\T^n$ with $n\geq 2$ satisfies the above spectral gap condition.
	
	\paragraph{Main result.}
	Our main result is an existence and stability result for \eqref{eq:A_eq}-\eqref{eq:A_ic} when we take $X$ to be a random field satisfying the aforementioned spectral gap inequality together with suitable symmetry conditions.
	We collect these conditions in Assumption~\ref{ass:field}
	and state the precise result in Theorem~\ref{thm:main}.
	
	For $a \in \T^n$, let $\tau_a \colon \CD'(\T^n) \to \CD'(\T^n)$ be the translation operator defined by $\tau_a \phi(x) = \phi(x-a)$.
	Let $\CR \colon \CD'(\T^n) \to \CD'(\T^n)$ be the reflection operator $\CR \phi (x) = \phi(-x)$.
	(Both operations are understood using duality.)
	
	\begin{assumption}\label{ass:field}
		Let $n\ge 1$ be an integer and $X$ be an $E$-valued random field on $\T^n$.
		We suppose there exists ${q}_*\in(nk,\infty)$ such that $X$ satisfies the spectral gap inequality with $L^{q^*}$-norm (see \defi{def:SG} below), where ${q}^*$ is the conjugate exponent of ${q}_*$, i.e. $\frac{1}{q_*} + \frac{1}{q^*} = 1$.
		
		We further assume that $X$ is stationary in the sense that, for any $a\in\T^n$,
		\[
		X \eqdist \tau_a(X)\;,
		\]
		where $\eqdist$ denotes equality in law (i.e. in distribution).
		
		Finally, if $F_{\mathrm{crit}}\not\equiv0$,
		then we assume that
		there exists $\bipar\in\{0,1\}$
		such that
		\begin{equ}\label{eq:compatible-symmetry}
			\CR^{\bipar} X\eqdist-X\;,
		\end{equ}
		where $\CR^0=\mathrm{Id}$ and $\CR^1=\CR$, and such that
		\[
		\bipar\equiv k\pmod 2
		\qquad\text{whenever }\; Q_k\not\equiv0\;.
		\]
		If $F_{\mathrm{crit}}\equiv0$, no symmetry assumption is imposed.
	\end{assumption}
	
	In other words, if $Q_k\not\equiv0$, we require symmetry in law
	$X\eqdist-X$ when $k$ is even and odd reflection symmetry
	$\CR X\eqdist-X$ when $k$ is odd. If $Q_k\equiv0$ but
	$P_{2k+1}\not\equiv0$, either of these two symmetries is sufficient.
	These symmetries are used exclusively to establish the probabilistic estimates in Section~\ref{sec:prob-est-SG} (see Lemma~\ref{lem:X^tau,e_mean-zero}).
	
	\begin{theorem}\label{thm:main}
		Let $\chi\in \CC^\infty(\R^n)$ be compactly supported such that $\chi(x)=\chi(-x)$
		and $\int_{\R^n}\chi(x)\,\mrd x =1$, and denote $\chi^\e(x) = \e^{-n}\chi(x/\e)$.
		Suppose Assumption~\ref{ass:field} 
		holds and denote $X^\e\eqdef \chi^\e* X$,
		a mollification of $X$ at scale $\e>0$,
		where we treat $\chi^\e$ as a kernel on $\T^n$ by periodic extension.
		Denote by $A^\e$ the unique maximal smooth solution to \eqref{eq:A_eq}  with initial condition $A^\e_0 = X^\e$.
		
		Then there exists a random variable $T\in (0,1)$ such that, for any $\eta < -n/q_*$
		and $p\in[1,\infty)$,
		\begin{equ}[eq:convergence]
			\lim_{\e\downarrow0} \sup_{t\in(0,T)}\big\{|A^\e_t - A_t|_{\CC^\eta}  + t^{-\eta/2}|A^\e_t - A_t|_\infty\big\} =0
			\;,\qquad
			\E [T^{-p}] < \infty\;,
		\end{equ}
		where the limit holds in $L^p(\P)$ and $\P$-almost surely (a.s.)
		and	where $A$ is smooth on $(0,T)\times \T^n$, solves \eqref{eq:A_eq}, and
		satisfies $\lim_{t\downarrow 0} |A_t-X|_{\CC^\eta} = 0$ almost surely.
	\end{theorem}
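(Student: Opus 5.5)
Our plan is a Da Prato--Debussche type decomposition, using the spectral gap only to control finitely many explicit stochastic objects built from the heat flow of $X$. We set $\Psi^\e_t = \mre^{t\Delta}X^\e$, $\Psi_t = \mre^{t\Delta}X$, and write $A^\e = \Psi^\e + B^\e$. The remainder $B^\e$ solves
\[
\partial_t B = \Delta B + F(\Psi+B, D\Psi+DB)\;,\qquad B_0=0\;.
\]
Expanding $F$ produces products $Q_i(\Psi^{\otimes i_1}\otimes B^{\otimes i_2}, D\Psi \text{ or } DB)$ and $P_j(\Psi^{\otimes j_1}\otimes B^{\otimes j_2})$.

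\textbf{Deterministic fixed point.} The deterministic input consists of bounds for $\Psi$ and for a finite list of "first-order" stochastic objects. Since $X\in \CC^{\eta}$ with $\eta<-n/q_*$, we have $|\Psi_t|_\infty\lesssim t^{\eta/2}$ and $|D\Psi_t|_\infty\lesssim t^{(\eta-1)/2}$. As in the introduction, the purely stochastic term $F(\Psi,D\Psi)$ has a non-integrable singularity when $\eta\le -1/(k+1)$. The first step is therefore to identify which products are too singular and must instead be controlled probabilistically. These are the objects
\[
\int_0^t \mre^{(t-s)\Delta} F(\Psi_s,D\Psi_s)\,\mrd s\;,
\]
together with the mixed terms in which $B$ enters linearly against high powers of $\Psi$ (e.g. $Q_k(\Psi^{k},DB)$ and $P_{2k+1}$ with $2k$ copies of $\Psi$). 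For these we need bounds of the form $|\Psi^{\otimes m}_s|_{\CC^{-\delta}}$ of the renormalised-free type, i.e. $s^{m\eta/2}$ improved by a small gain in a negative Hölder norm. The plan is to formulate a norm $\$B\$_T=\sup_{t<T} t^{-\theta}(|B_t|_\infty + t^{1/2}|DB_t|_\infty)$ with $\theta>0$ small. Assuming the stochastic objects $\Upsilon^\e$ lie in suitable weighted spaces with bound $K$, we then show by Schauder estimates that the map $B\mapsto$ Duhamel is a contraction on a ball for $T\lesssim K^{-c}$. Local Lipschitz continuity of the fixed point in $(X,\Upsilon)$ then gives convergence as $\e\to 0$. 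The convergence in $\CC^\eta$ and in $t^{-\eta/2}|\cdot|_\infty$ follows because $\Psi^\e\to\Psi$ in these norms, while $B$ is a smaller contribution. Finally, the time $T$ is defined through $K$.

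\textbf{Probabilistic estimates (main obstacle).} The key claim is that, for each multilinear object $\Upsilon_s=\Phi(\Psi_s,\dots,\Psi_s)$ with $m\le 2k+1$ factors (possibly with one derivative), we have
\[
\E\,|\langle \Upsilon_s,\varphi\rangle|^p \lesssim \bigl(s^{\frac{m\eta'}{2}+\kappa}\bigr)^p
\]
tested against heat-kernel-type test functions, with a gain $\kappa$ making the time integral converge. Here $\eta'$ is slightly above $-n/q_*>-1/k$. We write $\Upsilon=\E\Upsilon+(\Upsilon-\E\Upsilon)$. The mean vanishes for the critical terms $Q_k,P_{2k+1}$ by the symmetry in Assumption~\ref{ass:field}: it is constant in space by stationarity and odd by the sign/reflection symmetry. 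This is the role of Lemma~\ref{lem:X^tau,e_mean-zero}. For the fluctuation we apply the $p$-version of the spectral gap inequality, which reduces the estimate to bounding the $L^{q_*}$ norm of the Malliavin-type derivative $\partial\Upsilon/\partial X$. This derivative is $m\,\Phi(\Psi,\dots,\Psi,\mre^{s\Delta}\cdot)$, with $m-1$ factors of $\Psi$ bounded in $L^\infty$ by $s^{(m-1)\eta/2}$. The remaining heat kernel factor contributes $\|\mre^{s\Delta}\varphi\|_{L^{q_*}}\sim s^{-\frac n2(1-1/q_*)}$ against the test function, which gains exactly $s^{-n/(2q_*)}$ in place of $s^{\eta/2}$. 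This gain is what breaks the $-1/(k+1)$ barrier down to $-1/k$ when $q_*>nk$. The lower-order, non-critical terms have subcritical powers and need no mean cancellation.

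The main difficulty is carrying this out uniformly in $s$ and $\e$ with moments of supremum norms. To pass from pointwise-in-time moment bounds to a.s. control we plan to use Kolmogorov/Garsia-type chaining in space-time, at the cost of small losses of exponent. The same scheme applied to differences $\Upsilon^\e-\Upsilon$ (using $\chi^\e-\delta$) gives convergence with a power of $\e$, which yields the $L^p$ and a.s. limits via Borel--Cantelli along dyadic $\e$ plus continuity. Moments of $K$ of all orders give $\E[T^{-p}]<\infty$.
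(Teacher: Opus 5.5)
\textbf{There is a genuine gap.} Your scheme $A=\Psi+B$ uses only first-order stochastic objects (products of at most $2k+1$ copies of $\Psi=\CP X$) and places $B$ in a space with $|B_t|_\infty\lesssim t^{\theta}$, $\theta>0$. This does not cover the whole range $\alpha=-n/q_*\in(-1/k,0)$.

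\emph{The first iterate is too large for your space.} The spectral-gap gain does not improve the size of $\Upsilon_t=\CP_t\star F(\Psi,D\Psi)$. It only makes the time integral converge: the integrand is controlled in $\CC^{\alpha}$ with weight $s^{(k\alpha-1)/2}$, instead of in $L^\infty$ with the non-integrable weight $s^{((k+1)\alpha-1)/2}$. By scaling, $|\Upsilon_t|_\infty$ is generically of order $t^{((k+1)\alpha+1)/2}$, which blows up for $\alpha<-1/(k+1)$. So $B$, which contains $\Upsilon$, cannot lie in your space.

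\emph{Subtracting $\Upsilon$ only moves the threshold.} If you write $A=\Psi+\Upsilon+R$, the equation for $R$ contains $Q_k(\Psi,\dots,\Psi,D\Upsilon)$ and $Q_k(\Psi,\dots,\Psi,\Upsilon,D\Psi)$. These are of order $s^{(2k+1)\alpha/2}$ and are non-integrable once $\alpha\le -2/(2k+1)$, a threshold strictly above $-1/k$. Splitting such a product into a probabilistically estimated $\Psi^{\otimes k}\in\CC^{\alpha}$ times a deterministic $D\Upsilon\in\CC^{-\alpha+}$ gains nothing. The factor $s^{-\alpha/2}$ gained on the first factor is exactly lost in the time weight of the second. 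The product must be estimated as a single random object, i.e.\ a second-generation tree.

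\emph{The number of subtracted objects must grow.} Iterating, a term whose factors carry $M$ leaves in total is integrable only if $M(\alpha+1/k)>1/k$. So one must subtract all trees with up to $N=\floor{1/(k\alpha+1)}$ leaves, and $N\to\infty$ as $\alpha\downarrow-1/k$. This is why the paper builds the state space $(\CI,\Theta)$ from $\CT^N$ under condition~\eqref{eq:CI} and solves only for the remainder in Theorem~\ref{thm:lwp}. Your scheme corresponds to a fixed small truncation. It covers at best $\alpha>-2/(2k+1)$, and with your stated space for $B$ only the deterministic range $\alpha>-1/(k+1)$, where no stochastic input is needed at all.

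\textbf{The probabilistic part has the corresponding gap.} For deeper trees, the functional derivative replaces a leaf sitting inside iterated Duhamel integrals by $h\in L^{q_*}$. The other branches are then themselves singular random objects, not $L^\infty$-bounded copies of $\Psi$. Moreover, centring is needed for every tree, not just for the critical first-order products.

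The paper handles this by an induction over trees for $\sup_{|h|_{L^{q_*}}=1}|Y^{\tau^h}_t|_{L^{-n/\beta}}$ (see \eqref{eq:inductive-hypothesis}). It combines:
\begin{itemize}
\item H\"older's inequality in probability;
\item the heat-flow bound $|\CP_t h|_{L^{q}}\lesssim t^{(\alpha-\beta)/2}|h|_{L^{q_*}}$;
\item the previously established $\CC^{\alpha}$ moment bounds for the remaining branches;
\item the parity Lemma~\ref{lem:oddness}, which gives $\E X^{\tau,\e}=0$ for all $\tau\in\CT$ (Lemma~\ref{lem:X^tau,e_mean-zero}).
\end{itemize}
Your one-step computation, with $m-1$ factors of $\Psi$ bounded in $L^\infty$, is only the base case of that induction. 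The rest of your outline is consistent with the paper once the enhanced data is the full tree family $\{X^\tau\}_{\tau\in\CT^N}$ rather than a fixed finite list: Lipschitz dependence of the remainder on the enhanced data, Kolmogorov arguments in $\e$ and $t$, and $T$ defined from a random constant with all moments.
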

	
	Theorem~\ref{thm:main} extends the main result of \cite{CM25} in two ways.
	First, we allow general $k\geq 1$, while \cite{CM25} only considered the case $k=1$, corresponding heuristically to the non-linearity $A^3 + ADA$.\footnote{We note, however, that the methods of \cite{CM25} are sufficiently flexible that the extension to arbitrary $k\in\N$ is straightforward.}
	Second, and much more importantly, we allow the initial condition $X$ to be a general random field satisfying the above spectral gap inequality,
	while \cite{CM25} only considered Gaussian fields on $\T^n$, $n\geq 2$, with covariance $C$ satisfying the bounds
	\begin{equation}\label{eq:CM25_C}
		|C(x)|\lesssim |x|^{-\gamma}
		\;,
		\qquad
		|\nabla C(x)|\lesssim |x|^{-\gamma-1}
	\end{equation}
	for some $\gamma < 2=2/k$.
	(See also \cite[Chapters 2-3]{Mirsajjadi_26_thesis} for an extension of the methods of \cite{CM25} to general $k\geq 1$ and where the bound on $\nabla C$ is relaxed to a $\nu$-H{\"o}lder condition on $C$ with $\nu>0$.)
	As we show in Appendix~\ref{app:SG}, any stationary Gaussian field satisfying just the bound $|C(x)|\lesssim |x|^{-\gamma}$ for $\gamma<\min\{n,2/k\}$ automatically satisfies the spectral gap condition in Assumption~\ref{ass:field}.
	Thus Theorem~\ref{thm:main} recovers the main result of \cite{CM25} as a special case and, in particular, removes the required bound on $\nabla C$ from \eqref{eq:CM25_C}.
	We also give in Appendix~\ref{sec:non-Gaussian} a simple example of a non-Gaussian field (the Steinhaus series) satisfying Assumption~\ref{ass:field} which is not covered by \cite{CM25}.
	
	Perhaps most importantly, the proof of Theorem~\ref{thm:main} is, we believe, significantly simpler than that of \cite{CM25}, even in the Gaussian case.
	The method of \cite{CM25} relies on a careful analysis of Feynman diagrams arising
	from the Gaussian structure, requiring graph theoretic arguments.
	Our present approach is instead entirely analytic and based on the spectral gap inequality, which we apply inductively.

	\begin{remark}
	The only properties of the Laplacian in \eqref{eq:A_eq} that we use are spatial symmetries of the heat kernel $\mre^{t\Delta}$ together with heat flow estimates, see \eqref{eq:heat_flow_estimates} and Lemma \ref{lem:heat-flow-estimates}.
	Our methods apply with little change to other operators satisfying analogous properties, such as the bi-Laplacian $\Delta^2$,
	provided the critical scaling dimension is changed accordingly.
	\end{remark}
	
	\subsection{Method of proof}
	
	We briefly describe the main ideas behind the proof of Theorem~\ref{thm:main}.
	The argument has two main ingredients.
	
	The first is deterministic and is a variant of \cite[Section~2]{CM25}.
	We encode the singular terms appearing in the Picard expansion of
	\eqref{eq:A_eq} by a finite collection of labelled trees.  These terms are
	used to define a non-linear metric space $(\CI,\Theta)$ of rough initial conditions.
	After subtracting the corresponding finite part of the Picard expansion,
	the remainder satisfies an equation which can be solved by a standard
	contraction argument.  This yields a locally Lipschitz solution map on the
	resulting state space; see Theorem~\ref{thm:lwp}.
	The number of
	terms that need to be included increases as the regularity of the initial
	condition approaches the scaling-critical threshold.
	
	The main new ingredient is probabilistic.  We show in
	Section~\ref{sec:prob-est-SG} that, if the initial field satisfies
	Assumption~\ref{ass:field}, then its mollifications converge in the metric
	space $(\CI,\Theta)$, see Theorem~\ref{thm:convergence-of-mollifications}.  The key estimate is obtained by applying the spectral gap
	inequality to each singular term in the Picard expansion.
	Differentiating
	such a term with respect to the initial field $X$ amounts to replacing one of
	the occurrences of $X$ by a deterministic function $h$ of improved regularity.  The resulting
	expressions can then be estimated inductively using heat-flow smoothing and
	previously established bounds.
	Our symmetry assumptions ensure that the most singular terms in the Picard expansion are centred, see Lemma~\ref{lem:X^tau,e_mean-zero}, allowing the spectral gap inequality to
	be applied without introducing renormalisation.
	An induction over the
	trees yields uniform moment estimates as well as estimates for differences
	of mollifications.
	
	We combine in Section~\ref{sec:proof-of-main}
	the deterministic continuity of the solution map with the
	probabilistic convergence estimates to prove Theorem~\ref{thm:main}.

	\subsection{Related works and motivations}
	\label{sec:motivation}
	
	As mentioned above, this work extends the results of \cite{CM25} to general $k\geq 1$ and to non-Gaussian initial conditions.
	The work \cite{CM25} itself extends earlier well-posedness results from \cite{cao2021yang,CCHS22_3D}, which in particular treat GFF-type initial data in 3D,
	to the whole subcritical regime for $k=1$.
	There is also a large literature on using randomness of the initial condition to obtain well-posedness below deterministic regularity thresholds for dispersive equations, including non-linear Schrödinger and wave equations; see, for example, \cite{Bourgain94,Burq_Tzvetkov_08_I,Burq_Tzvetkov_08_II,Burq_10_ICM,Oh_Pocovnicu_16,Sun_Tzvetkov_20,OTW_20_4NLS,DengNahmodYue22,DengNahmodYue24,BringmannDengNahmodYue24} and the references therein.
	The mechanisms in these works are rather different from the one used here,
	but they address related problems of
	probabilistic well-posedness beyond the deterministic threshold.
	
	A second motivation for the present work comes from recent progress in using spectral gap inequalities to establish stochastic estimates underlying BPHZ theorems for singular stochastic PDEs.
	A diagram-free approach based on a spectral gap inequality and Malliavin calculus was developed in \cite{LOTT_24_SG} in the multi-index formulation of regularity structures; see \cite{OttoSauerSmithWeber25,LinaresOttoTempelmayr23,Bruned_Linares_24,BrouxOttoSteele25} and the lecture notes \cite{BOT_26_SG} for background on multi-indexes.
	This method was later adapted to different settings and equations in \cite{HS_24_SG,BailleulHoshino23,BB26_random_models,GT_26_SG}, including the tree-based regularity structures setting, see \cite{Hairer14,BHZ16,BCCH21} and the lecture notes \cite{Chevyrev22_Hopf_lectures} for background.
	See also \cite{chandra2016analytic,hairer2018class} for different approaches to stochastic estimates in regularity structures.
	
	The problem considered here does not require regularity structures since the singularity enters only through the initial condition rather than through a space-time forcing.
	We also do not need to consider renormalisation,
	a central feature in much of the theory of singular SPDEs,
	because our symmetry assumptions are
	compatible with the parity of the leading order non-linearities, $P_{2k+1}(A)$ and $Q_k(A,DA)$,
	and force
	the potentially divergent terms in the Picard expansion to vanish in
	expectation, see Lemma~\ref{lem:X^tau,e_mean-zero}.
	(But see \cite[Section~5.2]{Mirsajjadi_26_thesis} for a discussion of the renormalisation problem in the case of leading non-linearities of the form $A^{2k}$).
	Nevertheless, the stochastic estimates developed here exhibit a similar inductive structure with the spectral gap as the main input.
	A notable difference is that we apply the induction directly to the
	terms in the Picard expansion, in which all products are handled
	classically via heat-flow smoothing, without requiring tools such as
	modelled distributions.
	It would be interesting to understand whether there is a common framework
	encompassing both settings.
	
	There is also a closely related recent literature on reaction-diffusion equations with random initial conditions at or beyond scaling criticality.
	For the Allen--Cahn equation, \cite{Hairer_Le_Rosati_22} studies the evolution from rapidly mixing but vanishing Gaussian initial data in the full subcritical regime and its connection with mean-curvature flow, while \cite{GRZ23} considers two-dimensional white-noise initial data in a weakly critical regime.
	More recently, \cite{CD25McKeanVlasov} studies scaling-critical reaction-diffusion equations with random initial data and obtains McKean--Vlasov limits using Malliavin calculus together with the Gaussian Poincar\'e inequality, and \cite{PY26AllenCahnCLT} proves a central limit theorem for the Allen--Cahn equation with supercritical Gaussian initial conditions using related Malliavin and comparison estimates.
	These works concern scaling limits at critical or supercritical random initial data rather than local well-posedness at fixed coupling in the subcritical regime, but they are closely connected to the questions considered here.
	
	Finally, an important motivation for studying non-linear parabolic equations with rough initial conditions comes from stochastic quantisation and, more generally, flows arising from singular SPDEs.
	As can be seen in, e.g. \cite[Section~2.8]{BCCH21}, \cite[Sections~5-6]{CCHS22_3D},
	solutions to singular SPDEs close to criticality
	take values in H\"older--Besov spaces that are too irregular for even the deterministic (zero noise) equation to be well-posed from these spaces.
	This causes difficulties when trying to restart these dynamics from their own state at a later time.
	The work \cite{CCHS22_3D} handled this problem for the 3D Yang--Mills--Higgs stochastic quantisation equation by constructing a (non-linear) state space together with a corresponding Markov process; see also \cite{cao2024yang_state} for a related state space, \cite{CCHS22_2D} for the 2D case, and the survey \cite{Chevyrev22_YM}.
	It remains an open problem to prove the Markov property for general singular SPDEs close to criticality, such as the $\Phi^4_{4-\delta}$ stochastic quantisation equation for small $\delta>0$ \cite{HS22_Support}.
	
	The deterministic state space constructions of \cite{CCHS22_3D,cao2024yang_state} were
	extended in \cite{CM25} to the full subcritical regime for non-linearities of scaling
	dimension $-1$ (corresponding to $k=1$ here).
	Our construction in Section~\ref{sec:well-posedness} extends
	this further to the general polynomial non-linearities
	considered here (i.e. all $k\geq 1$).
	Together with the probabilistic estimates of
	Section~\ref{sec:prob-est-SG}, which show that random initial
	conditions satisfying a spectral gap inequality belong to these state
	spaces, our results can be seen as a step towards constructing restartable dynamics, and hence Markov
	processes, for broader classes of singular SPDEs.

	\subsection{Notations and preliminaries}\label{sec:notation}

	\textbf{Sets and graphs.} 
	We let $|Z|$ denote the cardinality of a set $Z$.
	We denote $\N = \{1,2,\ldots\}$ and $\N_0 = \N\cup \{0\}$.
	For any $ N\in \N$, we denote $ [N] = \{1,2,\dots, N\} $.
	For $j = (j_1,\ldots,j_n)\in\N_0^n$, we write $|j| = \sum_{i=1}^n j_i$.
	
	We equip $\R^n$ with the Euclidean norm $|\cdot|$ and for $x\in\T^n = \R^n/\Z^n$, we let $|x|$ denote the corresponding geodesic distance of $x$ from $0$ (which is simply the Euclidean norm of the
	corresponding representative if we identify $\T^n$ with
	$[-\frac12,\frac12)^n$ as a set).
	
	All graphs we consider are undirected and finite. 
	A \emph{forest} is a graph without cycles. Every connected component of a forest is called a \emph{tree}.
	
	\medskip 
	
	\textbf{Relations.}
	We use the standard notation $X\wedge Y = \min\{X,Y\}$.
	For $X,Y\geq 0$, we write $X\lesssim Y$ to mean that there exists a constant $K>0$ such that $X\leq KY$. If $X$ and $Y$ are functions, then $K$ is assumed uniform over a given set of variables which is either specified or is clear from the context. If we have both $X\lesssim Y$ and $Y\lesssim X$, then we write $X\asymp Y$.
	
	\medskip
	
	\textbf{Function spaces.} 
	We let $\CC,\CC^\infty,\CD'$ denote the spaces of continuous functions, smooth functions, and (Schwartz) distributions respectively.
	Unless otherwise stated, all function and distribution spaces have domain $\T^n$ and values in $E$, e.g. $\CC^\infty$ means $\CC^\infty(\T^n;E)$.
	
	\medskip
	
	\textbf{Probability.}
	Whenever randomness is involved, all random variables are understood to be
	defined on a probability space $(\Omega,\CF,\P)$,
	and $\E$ denotes expectation with respect to $\P$.
	For $p\in[1,\infty)$, we use the notation
	\[
	\|\cdot\|_{L^p}
	\eqdef
	\bigl(\E|\cdot|^p\bigr)^{1/p}
	\equiv
	\E^{1/p}|\cdot|^p
	\]
	for the probabilistic $L^p$-norm.
	
	\begin{definition}\label{def:heat-kernel}
		For $X\in\CD'$, we denote by
		\begin{equ}
			\CP X \in \CC^\infty((0,\infty)\times \T^n; E)\;,
			\qquad \CP_t X \eqdef \mre^{t\Delta}X\;,
		\end{equ}
		the solution to the heat equation with initial condition $X$.
		Furthermore, for $X\in \CC^\infty([0,T]\times \T^n;E)$
		we denote by
		\begin{equ}[eq:heat_flow_space_time]
			\CP \star X \in \CC^\infty([0,T]\times \T^n; E)\;,
			\quad \CP_t \star X
			\eqdef \int_0^t \mre^{(t-s)\Delta} X_s \,\mrd s\;,
		\end{equ}
		the solution to the inhomogeneous heat equation with source $X$ and zero initial condition\footnote{The notation $\CP_t \star X$ is a shorthand convention, defined by
			\[
			(\CP \star X)_t \;=\; \int_0^t \CP_{t-s} X_s\,\mrd s\;.
			\]
			Although nonstandard, this notation is convenient in the present context.}.
	\end{definition}
	
	For $\eta < 0$ we denote by $\CC^\eta$ the (inhomogeneous) H\"older--Besov space of $E$-valued distributions, defined
	as the Banach space of all $X \in \mcD'(\T^n;E)$  such that
	\begin{equ}\label{eq:CC_eta_def}
		|X|_{\CC^\eta}\eqdef 
		\sup_{\phi \in \CB^r}\sup_{x\in\T^n}
		\sup_{\lambda\in (0,1]} \lambda^{-\eta}|\scal{X,\phi^\lambda_x}| < \infty\;,
	\end{equ}
	where $r=-\floor{\eta}+1$, $\CB^r$ is the set of all  $\phi \in \CC^\infty(\R^n,\R)$ with support in the ball $\{|z|<\frac14\}$ and $|\phi|_{\CC^r}\leq 1$,
	and $\phi^\lambda_x\in\CC^\infty(\T^n,\R)$ is defined by $\phi^\lambda_x (z) = \lambda^{-n}\phi((z-x)/\lambda)$,
	understood as a function on $\T^n$ by periodically extending $\phi$.
	Here and below we write $\floor x$ for the floor of $x\in \R$.
	
	We further denote $\CC^0=L^\infty$, and for $\eta>0$, we let $\CC^{\eta}$ be the usual Banach space of functions with derivatives of order $j\eqdef \roof{\eta}-1$
	(where $\roof{\eta}$ is the ceiling of $\eta$)
	and whose $j$-th order derivatives are $(\eta-j)$-H\"older continuous,
	e.g. $\CC^1$ consists of bounded Lipschitz functions.

	We recall the heat flow estimates for
	$\eta\in \R$ and $\gamma\geq 0$ (see, e.g.~\cite[Lemma~A.7]{gubinelli2015paracontrolled}, or~\cite[Sections~2.1.1-2.1.2, Theorem~2.34]{BookChemin}),
	uniformly in $t\in(0,1)$,
	\begin{equ}[eq:heat_flow_estimates]
		t^{\gamma/2}|\CP_t X|_{\CC^{\eta+\gamma}}\lesssim  |X|_{\CC^\eta}\;.
	\end{equ}
	(Note our convention for $\CC^j$ with $j\in\N_0$ differs from the Besov space $B^j_{\infty,\infty}$ used in \cite{gubinelli2015paracontrolled} but the heat flow estimates remain the same.)
	
	Finally, we use, for a multi-index $j\in \N^n_0$, the shorthand $f^{(j)} = D^j f$.

	\section{Deterministic well-posedness}
	
	\label{sec:well-posedness}
	
	In this section, we introduce a (non-linear) metric 
	space of distributions to which the solution map $X\mapsto A$ for \eqref{eq:A_eq}-\eqref{eq:A_ic}
	extends in a locally Lipschitz way.
	This section is entirely deterministic
	and closely follows \cite[Section~2]{CM25},
	which proves the results of this section for $k=1$.
	We fix an integer $n\ge 1$ and, throughout, use the shorthand notation $Q \equiv Q_k$ and $P \equiv P_{2k+1}$.	
	
	We begin by defining a set $\CT$
	of trees that encode the most singular terms that appear in the Picard iterations for \eqref{eq:A_eq}-\eqref{eq:A_ic}.

	\begin{definition}\label{def:trees}
		For a tree $\tau$ (in the graph theoretic sense), we let $V_\tau$ and $E_\tau$ denote the vertex and edge set respectively of $\tau$ (recall that all graphs we consider are finite).
		\begin{itemize}
			\item A \emph{rooted tree} is a tree $\tau$ with a distinguished vertex $ \rho_\tau \in V_\tau$, called the root.
			\item A \emph{leaf} of a rooted tree $\tau$ is a vertex that either (a) has degree $1$ and is not the root, or (b) has degree $0$ (in which case it is necessarily the root and $\tau$ is just a single vertex).
			\item A \emph{labelled tree} is a rooted tree $\tau$ along with a map $\mfe\colon E_\tau \to \{I,I'\}$,
			where elements of $\{I,I'\}$ are formal symbols which we interpret as labels.	
			We denote by $\CL$ the set of all labelled trees.	
		\end{itemize}
		We denote the rooted tree with a single vertex (the root) by $\<Xi>$.
	\end{definition}

	\begin{definition}[Singular trees]
		\label{def:CT_trees}
		We begin by defining some operations on $\CL$.  
		For $\tau \in \CL$, we let $I(\tau)$ denote the tree obtained by adjoining the root of $\tau$ to a new root via an edge labelled $I$.
		Similarly, we let $I'(\tau)$ denote the tree obtained by adjoining the root of $\tau$ to a new root via an edge labelled $I'$.
		E.g.\ $I(\<Xi>)$ and $I'(\<Xi>)$ are trees with two vertices and one edge labelled $I$ and $I'$ respectively.
		
		For $\tau, \tau' \in \CL$, we define the product $\tau\tau'$ as the tree obtained by merging the roots of $\tau$ and $\tau'$.  
		We view trees in $\CL$ as non-planar rooted trees; in particular, the product $\tau\tau'$ is commutative 
		and associative, consistent with the combinatorial identification of trees up to graph isomorphism (see Remark~\ref{rem:isom}).
		
		\medskip 
		
		The subset $\CT \subset \CL$ of \emph{singular trees} is defined inductively as follows:
		\begin{itemize}
			\item Include $\<Xi> \in \CT$.  
			\item For any $\tau_1,\dots,\tau_{2k+1} \in \CT$, include $\prod_{i=1}^{2k+1} I(\tau_i)$.  
			\item For any $\tau_1,\dots,\tau_{k+1} \in \CT$, include $\big(\prod_{i=1}^{k} I(\tau_i)\big) I'(\tau_{k+1})$.  
		\end{itemize}
		
	\end{definition}
	
	\medskip
	
	For $X\in \CC^\infty$, denote $X^{\<Xi>}=\delta_{t=0}\otimes X \in \CD'(\R\times\T^n;E)$, i.e. the distribution $X$ concentrated on the time $t=0$ hyperplane.	
	We extend notation \eqref{eq:heat_flow_space_time} by writing $\CP_t \star X^{\<Xi>}=\CP_t X \in \CC^\infty([0,\infty)\times\T^n;E)$. 
	For $\tau\in \CT\setminus\{\<Xi>\}$,
	we also define  $X^\tau \in \CC^\infty([0,\infty)\times \T^n;E)$ inductively by
	\begin{equs}[eq:X_tau_def]
		X^{I'(\tau_{k+1})\prod_{i=1}^k I(\tau_i)}_t &= Q(\CP_t \star X^{\tau_1}, \ldots, \CP_t \star X^{\tau_k}, D \CP_t \star X^{\tau_{k+1}})\;,\\
		X^{\prod_{i=1}^{2k+1}I(\tau_i)}_t &= P(\CP_t \star X^{\tau_1},\ldots, \CP_t \star X^{\tau_{2k+1}})\;,
	\end{equs}
	
	\begin{remark}\label{rem:isom}
		Our trees are combinatorial meaning that we do not consider an order for edges leaving a single vertex; more precisely, we identify any two trees which differ by a graph isomorphism preserving roots and labels of edges.
		Note that each term in \eqref{eq:X_tau_def} is well-defined on the isomorphism class of the corresponding (superscript) tree due to symmetry assumptions on $P,Q$.
	\end{remark}
	
	For $\tau\in \CT$, we denote by $\noise{\tau}$ the number of leaves in $\tau$,
	and, for $ i\in\N$, we define
	\begin{equ}
		T_i\eqdef \{\tau\in\CT\,:\; \,\noise{\tau}=i\}\;.
	\end{equ}
	Furthermore, for $ N\in \N$, we denote  
	\begin{equ}
		\CT^N_{\<Xi>} \eqdef \bigcup_{j=1}^N T_j\;,
		\qquad
		\CT^N \eqdef \CT^N_{\<Xi>}\setminus\{\<Xi>\}
		\;.
	\end{equ}
	
	\begin{definition}
		\label{def:norms+init+}
		For a Banach space $(W,|\cdot|)$ and $\delta\in\R$, let $\CC_{\delta}W$  denote the Banach space of continuous functions $f\colon (0,1)\to W$ with norm
		\begin{equ}
			|f|_{\CC_\delta W} \eqdef \sup_{t\in(0,1)}t^{\delta}|f_t|\;.
		\end{equ}
		Let $X,Y\in \CC^\infty$	and 
		$\delta,\beta \in\mathbb{R}$.
		For $\tau \in \CT$ with $\tau\neq\<Xi>$, define the pseudometric
		\begin{align*}
			\fancynorm{X;Y}_{\tau;(\beta,\delta)} &\eqdef |X^\tau-Y^{\tau}|_{\CC_\delta 	\CC^\beta}\;.
		\end{align*}
		For $ N\in \N$, $\omega_{\<Xi>}\in\R$, and vectors $ \bmbeta=(\beta_\tau)_{\tau\in\CT^N}\in \mathbb{R}^{\CT^N} $ and $ \bmdelta=(\delta_\tau)_{\tau\in\CT^N}\in \mathbb{R}^{\CT^N}$,
		define
		\begin{equ}[eq:Theta]
			\Theta_{\omega_{\<Xi>},\bmbeta,\bmdelta}(X,Y)  \eqdef 
			|X-Y|_{\CC^{\omega_{\<Xi>}}}+
			\sum_{\tau\in\CT^N}\fancynorm{X;Y}_{\tau;(\beta_\tau,\delta_\tau)}\,.
		\end{equ}
		Let $\init\equiv\init_{\omega_{\<Xi>},\bmbeta,\bmdelta}$
		denote the completion of smooth functions under the metric $\Theta\equiv \Theta_{\omega_{\<Xi>},\bmbeta,\bmdelta}$.
		We further denote
		$\Theta(X) = \Theta(X,0)$.
	\end{definition}
	
	\begin{definition}\label{def:CI_k}
		Consider $N \in\N$,  vectors $ \bmbeta=(\beta_\tau)_{\tau\in\CT_{}^N}, \bmdelta=(\delta_\tau)_{\tau\in\CT_{}^N}\in \mathbb{R}^{\CT_{}^N}$, and $\omega_{\<Xi>}\le 0$.
		Define, for $\tau\in\CT^N$, $\omega_\tau \eqdef \beta_\tau-2\delta_\tau+2$.
		We say that $N, \omega_{\<Xi>},\bmbeta,\bmdelta$ satisfy condition~\eqref{eq:CI} if
		\begin{equs}\label{eq:CI}
			\begin{aligned}
				&\forall \tau\in\CT_{}^N\,:\quad \beta_\tau \in (-1,0)\;,\quad \delta_\tau < 1\;,
				\quad \omega_\tau\le 0\;, \\
				&\omega \eqdef \min\{\omega_\tau \,:\, \tau\in\CT_{\<Xi>}^N\} > -1/k\;,
				\\
				&\lambda \eqdef \min \Big\{\sum_{i=1}^{k+1}\omega_{\tau_i} \,:\, \tau_i \in\CT_{\<Xi>}^N\,,\; \sum_{i=1}^{k+1}\noise{\tau_i}>N \Big\} > -1\;,
				\\
				&\gamma  \eqdef \min\Big\{\sum_{i=1}^{2k+1}\omega_{\tau_i} \,:\, \tau_i \in\CT_{\<Xi>}^N\,,\; \sum_{i=1}^{2k+1}\noise{\tau_i}>N\Big\} > -2\;.
			\end{aligned}\tag{$\CI$}
		\end{equs}
	\end{definition}
	We remark that, for $\tau\neq\<Xi>$, $\beta_\tau>-1$ and $\omega_\tau\le 0$ together imply $\delta_\tau>1/2$.
	\begin{remark}
		If $Q \equiv 0$ (respectively, $P \equiv 0$), then the condition on $\lambda$ (respectively, $\gamma$) in \eqref{eq:CI} is unnecessary and may be omitted.
	\end{remark}

	The next proposition states that $\mcI$, under condition~\eqref{eq:CI},
	can be realised as a subset of $\CC^{\omega_{\<Xi>}}$. 
	
	\begin{proposition}\label{prop:closable_graph}
		Let $\omega_{\<Xi>}\in\R$, $\bmbeta=(\beta_\tau)_{\tau\in\CT^N}, \bmdelta=(\delta_\tau)_{\tau\in\CT^N}\in \mathbb{R}^{\CT^N}$
		such that $\beta_\tau\in(-1,0)$ and $\delta_\tau<1$ 
		for all $\tau\in\CT^N$. 
		Then the map
		\[
		\CC^\infty \ni X \mapsto \{X^\tau\}_{\tau\in \CT^N} \in (V_{\tau})_{\tau\in\CT^N} \;,
		\]
		has a closable graph in $\CC^{\omega_{\<Xi>}}\times (V_{\tau})_{\tau\in\CT^N}$, where $V_{\tau}=  \CC_{\delta_\tau}\CC^{\beta_\tau}$
		for $\tau\in \CT^N$.
		In particular, $\CI$ is continuously embedded into $\CC^{\omega_{\<Xi>}}$.
	\end{proposition}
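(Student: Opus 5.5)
Closability of the graph means the following. Suppose $X_m\in\CC^\infty$ satisfy $X_m\to 0$ in $\CC^{\omega_{\<Xi>}}$ and $X_m^\tau\to Z^\tau$ in $V_\tau$ for every $\tau\in\CT^N$. We must show that $Z^\tau=0$ for all $\tau$.

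The plan is to argue by induction on the number of leaves $\noise{\tau}$. We show that each limit $Z^\tau$ vanishes as a distribution on $(0,1)\times\T^n$, and hence, by continuity in $t$, at each fixed time. The embedding of $\CI$ into $\CC^{\omega_{\<Xi>}}$ then follows directly. Indeed, $\Theta(X,Y)\ge |X-Y|_{\CC^{\omega_{\<Xi>}}}$, so the identity map extends to a continuous map $\CI\to\CC^{\omega_{\<Xi>}}$. This extension is injective precisely because the graph is closable: if two points of $\CI$ have the same image, the difference of approximating sequences gives a sequence as above, with $X_m\to 0$ while the tree components converge.

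The key observation is that all limits can be identified on $[s,1)$ for each fixed $s>0$. There the heat flow is smoothing, so the maps are continuous even though they are not continuous down to $t=0$.

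The argument proceeds in three steps.

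\textbf{Base of the induction.} Since $X_m\to 0$ in $\CC^{\omega_{\<Xi>}}$, the heat flow estimates \eqref{eq:heat_flow_estimates} give $\CP_tX_m\to 0$ and $D\CP_tX_m\to 0$ in $L^\infty$, uniformly on $t\in[s,1)$ for every $s>0$.

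\textbf{Inductive step for the integrals.} Consider $\tau\ne\<Xi>$ with $X_m^\tau\to Z^\tau$ in $\CC_{\delta_\tau}\CC^{\beta_\tau}$. The assumptions $\beta_\tau>-1$ and $\delta_\tau<1$ give
\[
|\CP_{t-r}X^\tau_{m,r}|_{\CC^{1+\kappa}}\lesssim (t-r)^{-(1+\kappa-\beta_\tau)/2}\,r^{-\delta_\tau}\,|X^\tau_m|_{\CC_{\delta_\tau}\CC^{\beta_\tau}}
\]
for small $\kappa>0$. This is integrable in $r\in(0,t)$, because $(1+\kappa-\beta_\tau)/2<1$ and $\delta_\tau<1$. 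Hence $\CP\star X^\tau_m\to\CP\star Z^\tau$ in $\CC^1$, uniformly on compact subsets of $(0,1)$. This is the step where the restrictions $\beta_\tau\in(-1,0)$ and $\delta_\tau<1$ are used, so that the time integral from $0$ is controlled both at $r=0$ and at $r=t$ with the derivative $D$ included.

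\textbf{Inductive step for the products.} Suppose $\tau=\prod I(\tau_i)$, or the $Q$-type tree, with all $\tau_i\in\CT^N_{\<Xi>}$; the $\tau_i$ have fewer leaves than $\tau$. By the induction hypothesis, each factor $\CP\star X_m^{\tau_i}$, and $D\CP\star X_m^{\tau_{k+1}}$ in the $Q$ case, converges locally uniformly on $(0,1)\times\T^n$ to $\CP\star Z^{\tau_i}$. For $\tau_i=\<Xi>$ this limit is $0$, and by the induction hypothesis it is $\CP\star 0=0$ in general. Since $P$ and $Q$ are multilinear, $X_m^\tau_t$ converges to $0$ in $L^\infty$ for each $t>0$. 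Convergence in $\CC_{\delta_\tau}\CC^{\beta_\tau}$ implies convergence in $\CC^{\beta_\tau}$ at each fixed $t$, and hence in $\CD'$. By uniqueness of distributional limits, $Z^\tau_t=0$. This completes the induction.

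I do not expect a major obstacle. The only delicate point is bookkeeping: the induction only involves subtrees $\tau_i$, all of which lie in $\CT^N_{\<Xi>}$ because they have fewer leaves than $\tau$. A second point is that the limit must be identified via convergence on $[s,1)$ for positive $s$, not uniformly near $t=0$; this suffices because $Z^\tau$ is a function of $t\in(0,1)$.
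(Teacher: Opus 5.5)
Your analytic core is right, and it is the ``straightforward induction with heat flow estimates'' the paper points to (via \cite{CM25}). The bound on $|\CP_{t-r}X^\tau_{m,r}|_{\CC^{1+\kappa}}$, which uses $\beta_\tau>-1$ and $\delta_\tau<1$, makes $f\mapsto\CP\star f$ and $f\mapsto D\CP\star f$ continuous from $V_\tau$ into $L^\infty$, uniformly on $[s,1)$. Multilinearity of $P,Q$ then passes limits through the recursion.

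The gap is in how you formulate closability. The map $X\mapsto\{X^\tau\}$ is not linear: each $X^\tau$ is a homogeneous polynomial of degree $\noise{\tau}$ in $X$. So closability of its graph is not the criterion you state ($X_m\to0$ and $X^\tau_m\to Z^\tau$ imply $Z^\tau=0$). You must show that whenever $X_m\to X$ in $\CC^{\omega_{\<Xi>}}$ and $X^\tau_m\to Z^\tau$ in $V_\tau$, the limits $Z^\tau$ depend only on $X$.

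Your reduction in the embedding paragraph, taking the difference of two approximating sequences $X_m,Y_m$ with the same $\CC^{\omega_{\<Xi>}}$-limit, fails exactly here. In general $(X_m-Y_m)^\tau\neq X^\tau_m-Y^\tau_m$. The cross terms, in which some leaves carry $X_m$ and others $Y_m$, are not controlled by $\Theta$, so the tree components of $X_m-Y_m$ need not converge in $V_\tau$ at all. As written, injectivity of $\CI\to\CC^{\omega_{\<Xi>}}$, which is the real content of ``embedded'', is not established.

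The fix uses nothing new. Run the same induction for a general limit $X$, and prove that for every $t\in(0,1)$
\[
Z^\tau_t=Q\big(\CP_t\star Z^{\tau_1},\ldots,\CP_t\star Z^{\tau_k},D\CP_t\star Z^{\tau_{k+1}}\big)\quad\text{resp.}\quad Z^\tau_t=P\big(\CP_t\star Z^{\tau_1},\ldots,\CP_t\star Z^{\tau_{2k+1}}\big)\;,
\]
with the convention $\CP_t\star Z^{\<Xi>}\eqdef\CP_tX$. The steps carry over as follows.
\begin{itemize}
\item Your base case becomes $\CP_tX_m\to\CP_tX$ in $\CC^1$, uniformly on $[s,1)$.
\item Your integral step, applied to $X^{\tau_i}_m-Z^{\tau_i}$, gives $\CP\star X^{\tau_i}_m\to\CP\star Z^{\tau_i}$ and $D\CP\star X^{\tau_i}_m\to D\CP\star Z^{\tau_i}$ in $L^\infty$, uniformly on $[s,1)$.
\item Continuity of the multilinear maps gives convergence of $X^\tau_{m,t}$ in $L^\infty$ to the right-hand side. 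Comparing with the $\CC^{\beta_\tau}$-limit $Z^\tau_t$ identifies the two.
\end{itemize}
This recursion determines every $Z^\tau$ from $X$, by induction on $\noise{\tau}$. Hence two $\Theta$-Cauchy sequences with the same $\CC^{\omega_{\<Xi>}}$-limit have the same tree limits and define the same point of $\CI$, which is the injectivity you need.
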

	\begin{proof}
		The proof proceeds by a straightforward induction and relies on standard heat flow estimates. See \cite[Proof of Proposition 2.7]{CM25} for a similar argument used to establish an analogous result.
	\end{proof}
	In what follows, assuming condition \eqref{eq:CI}, we denote for $X\in\CI$
	\begin{equ}\label{eq:SN_def}
		\CS^{N}_{\<Xi>}X
		\eqdef
		\sum_{\tau\in\CT^N_{\<Xi>}} c_\tau X^\tau,
		\qquad
		\CS^{N}X
		\eqdef
		\CS^N_{\<Xi>}X-X^{\<Xi>}
		=
		\sum_{\tau\in\CT^N} c_\tau X^\tau\;,
	\end{equ}
	which are well-defined by \prop{prop:closable_graph} since $X^\tau\in V_{\tau}$ for every $\tau\in\CT^N$.
	The combinatorial constants $(c_\tau)_{\tau\in\CT}$ are defined
	recursively as follows. We first set
	\[
	c_{\<Xi>}\eqdef 1\;.
	\]
	
	Consider a tree of the form
	$\tau
	=
	I'(\tau_{k+1}) \prod_{i=1}^{k}I(\tau_i)$.
	Let
	$\{\sigma_1,\ldots,\sigma_{\mft}\}
	=
	\{\tau_1,\ldots,\tau_k\}$
	be the set of distinct trees appearing in the first $k$ branches, and
	define their multiplicities by
	\[
	k_j
	\eqdef
	\big|\{i\in[k]:\tau_i=\sigma_j\}\big|\;,
	\qquad j\in[\mft]\;.
	\]
	Thus,
	$\sum_{j=1}^{\mft}k_j=k$.
	We then define
	\begin{equ}
		c_{I(\tau_1)\cdots I(\tau_k)I'(\tau_{k+1})}
		\eqdef
		\binom{k}{k_1,\ldots,k_{\mft}}
		\prod_{i=1}^{k+1}c_{\tau_i}\;,
		\quad
		\textnormal{where}
		\;\,
		\binom{k}{k_1,\ldots,k_{\mft}}
		=
		\frac{k!}{k_1!\cdots k_{\mft}!}\;.
	\end{equ}
	
	Similarly, consider a tree of the form
	$\tau
	=
	\prod_{i=1}^{2k+1}I(\tau_i)$.
	Let
	$\{\sigma'_1,\ldots,\sigma'_{\mft'}\}
	=
	\{\tau_1,\ldots,\tau_{2k+1}\}$
	be the set of distinct trees appearing among its branches, and define
	\[
	k'_j
	\eqdef
	\big|\{i\in[2k+1]:\tau_i=\sigma'_j\}\big|\;,
	\qquad j\in[\mft']\;.
	\]
	Thus,
	$\sum_{j=1}^{\mft'}k'_j=2k+1$.
	We define
	\begin{equ}
		c_{I(\tau_1)\cdots I(\tau_{2k+1})}
		\eqdef
		\binom{2k+1}{k'_1,\ldots,k'_{\mft'}}
		\prod_{i=1}^{2k+1}c_{\tau_i}\;,
		\quad
		\textnormal{where}
		\;\,
		\binom{2k+1}{k'_1,\ldots,k'_{\mft'}}
		=
		\frac{(2k+1)!}{k'_1!\cdots k'_{\mft'}!}\;.
	\end{equ}
The first $k$ branches in the first recursion are interchangeable because
$Q$ is symmetric in its first $k$ arguments, whereas the branch carrying
the label $I'$ is distinguished. In the second recursion, all $2k+1$
branches are interchangeable because $P$ is symmetric in all its arguments.
	
	
	\begin{theorem}[Well-posedness of \eqref{eq:A_eq}-\eqref{eq:A_ic}] \label{thm:lwp}
		Suppose that  $N\in\N$, $\omega_{\<Xi>}\le 0$, and $ \bmbeta, \bmdelta \in\R^{\CT^N}\!$ satisfy condition~\eqref{eq:CI}.
		Let $\theta> 0$ be such that
		\begin{equ}[eq:theta_assump_k]
			k\omega/2-\theta>-1/2\;,
			\quad
			\lambda/2-\theta>-1/2\;,
			\quad
			\gamma/2-\theta>-1
		\end{equ}
		(such $\theta>0$ exists due to condition \eqref{eq:CI}).
		For $T>0$, let $ \CB_T$ denote the Banach space of functions $R\in\CC([0,T];\CC(\T^n;E))$ for which
		\begin{align*}
			|R|_{\CB_T} \eqdef \sup_{t\in(0,T)} \big\{ t^{-\theta}|R_t|_\infty + t^{\frac12-\theta}|R_t|_{\CC^1} \big\} < \infty\;.
		\end{align*}
		Then there exist $\nu,\e>0$ with the following property.
		For all $K>1$
		and $X\in\init$ such that $\Theta(X)\leq K$,
		if $T^\nu <\e K^{-2k}$,
		then there exists a unique function $R(X)\in \CB_T$ such that
		\begin{equ}
			A\eqdef R(X)+\CP\star \CS^N_{\<Xi>}X \colon (0,T]\to \CC^\infty(\T^n;E)
		\end{equ}
		solves  \eqref{eq:A_eq}
		with initial condition $X$
		in the sense that
		$\lim_{t\downarrow0}|A_t-X|_{\CC^\omega}=0$ where we recall $\omega \eqdef  \min_{\tau\in\CT^N_{\<Xi>}}\omega_\tau$
		and $\CS^N_{\<Xi>}X$ from \eqref{eq:SN_def}.
		
		Furthermore, $ |R(X)|_{\CB_T} \leq K$
		and the map $\{X\in\init \,:\, \Theta(X) \leq K \} \ni X\mapsto R(X)\in \CB_T$ is $1$-Lipschitz.
	\end{theorem}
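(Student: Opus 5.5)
We write $A = \CP\star \CS^N_{\<Xi>}X + R$ and derive the equation for the remainder $R$. Set $\Psi \eqdef \CP\star\CS^N_{\<Xi>}X$; since $\CP\star X^{\<Xi>} = \CP X$, we have $\Psi = \CP X + \sum_{\tau\in\CT^N} c_\tau\, \CP\star X^\tau$. Substituting into \eqref{eq:A_eq}, $R$ must solve the fixed point problem
\begin{equ}
R = \CM(R) \eqdef \CP\star\Big( F(\Psi+R, D\Psi+DR) - \sum_{\tau\in\CT^N} c_\tau X^\tau\Big)\;.
\end{equ}
The key algebraic step is to expand $F_\crit(\Psi+R,D\Psi+DR)$ multilinearly, using $\Psi=\sum_{\sigma\in\CT^N_{\<Xi>}} c_\sigma \CP\star X^\sigma$. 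The combinatorial constants $c_\tau$ are defined precisely so that the terms with no $R$ and total leaf count $\sum\noise{\tau_i}\le N$ reproduce exactly $\sum_{\tau\in\CT^N} c_\tau X^\tau$. This works because the multinomial coefficients count the orderings of the symmetric arguments of $Q$ and $P$, and $X^\tau$ is well defined on isomorphism classes by Remark~\ref{rem:isom}. These terms therefore cancel. What remains in $\CM(R)$ falls into four groups:
\begin{itemize}
\item products of $k+1$ (respectively $2k+1$) factors $\CP\star X^{\tau_i}$ with $\sum\noise{\tau_i}>N$;
\item terms containing at least one factor of $R$ or $DR$;
\item all lower-order terms $Q_i$, $i<k$, and $P_j$, $j<2k+1$;
\item the smooth-by-smooth pieces.
\end{itemize}
I will do this bookkeeping first, by induction on the tree structure.

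Next I bound every factor. By \eqref{eq:heat_flow_estimates}, and by integrating $t^{-\delta_\tau}|X^\tau_t|_{\CC^{\beta_\tau}}$ in time (using $\beta_\tau>-1$ and $\delta_\tau<1$), I expect, for $\tau\in\CT^N_{\<Xi>}$,
\begin{equ}
|\CP_t\star X^\tau|_\infty\lesssim t^{\omega_\tau/2}\Theta(X)\;,\qquad |D\CP_t\star X^\tau|_\infty\lesssim t^{(\omega_\tau-1)/2}\Theta(X)\;.
\end{equ}
Here $\omega_\tau = \beta_\tau-2\delta_\tau+2$ is exactly the exponent produced, and for $\<Xi>$ one simply has $\omega_{\<Xi>}$. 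For $R\in\CB_T$ I use $|R_t|_\infty\le t^\theta|R|_{\CB_T}$ and $|DR_t|_\infty\le t^{\theta-1/2}|R|_{\CB_T}$. A generic term of $F$ is then bounded in $L^\infty$ at time $s$ by $K^{m}s^{a}$, where $m\le 2k+1$.

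The main check is that in every group $a>-1$ with a margin. For the first group this follows from $\lambda>-1$ and $\gamma>-2$. For a term containing $R$, the worst case is $Q$ with $k$ rough factors and $DR$, giving exponent $k\omega/2+\theta-1/2$. Alternatively, $k-1$ rough factors, $R$, and $D\Psi$ give $(k\omega-1)/2+\theta$. Both are governed by \eqref{eq:theta_assump_k}, which in fact implies the stronger bound $a>\theta-1+\nu'$ for some $\nu'>0$; the same holds for the $P$ terms with $2k\omega/2$. Integrating, $\int_0^t s^a\,\mrd s$ and $\int_0^t (t-s)^{-1/2}s^a\,\mrd s$ (the latter for the $\CC^1$ part) are bounded by $t^{\theta+\nu}$ and $t^{\theta-1/2+\nu}$. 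Lower-order terms only improve the exponents, since $\omega\le 0$ and $t<1$.

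This yields $|\CM(R)|_{\CB_T}\lesssim T^\nu\big(K^{2k+1}+|R|_{\CB_T}^{2k+1}+\dots\big)$. The same multilinear expansion of differences gives
\begin{equ}
|\CM_X(R)-\CM_Y(R')|_{\CB_T}\lesssim T^\nu K^{2k}\big(|R-R'|_{\CB_T}+\Theta(X,Y)\big)
\end{equ}
on the ball of radius $K$. Since every term has at most $2k$ factors multiplying the differenced factor, the condition $T^\nu<\e K^{-2k}$ makes $\CM$ a contraction mapping the ball of radius $K$ into itself (the inhomogeneous part is bounded by $T^\nu K^{2k+1}\le \e K$).

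Banach's fixed point theorem then gives existence and uniqueness of $R$ with $|R|_{\CB_T}\le K$. For the Lipschitz bound, write $|R(X)-R(Y)|\le \tfrac12|R(X)-R(Y)| + C\e\,\Theta(X,Y)$, which gives the $1$-Lipschitz property after shrinking $\e$. Smoothness of $A$ on $(0,T]$ follows by parabolic bootstrapping, and extension from smooth $X$ to $\init$ follows from continuity together with Proposition~\ref{prop:closable_graph}. Finally, $A_t - X = (\CP_tX-X) + \sum_{\tau\in\CT^N} c_\tau\CP_t\star X^\tau + R_t$. The first term tends to $0$ in $\CC^{\omega}$ because $\omega<\omega_{\<Xi>}$ or by density. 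The middle terms are $O(t^{\epsilon})$ in $\CC^{\omega}$ by the bounds above, using $\omega\le\omega_\tau$ with a small loss handled by interpolation. The last term is $O(t^\theta)$.

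The main obstacle is the algebraic cancellation: verifying that the multinomial weights $c_\tau$ match the expansion exactly, including the distinguished $I'$ branch of $Q$.
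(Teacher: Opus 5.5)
Your proposal is correct and follows essentially the same route as the paper: the ansatz $A=R+\CP\star\CS^N_{\<Xi>}X$ (your $\Psi=\CP\star\CS^N_{\<Xi>}X$), cancellation of the terms with $\sum_i\noise{\tau_i}\le N$ against $\CP\star\CS^N X$ via the constants $c_\tau$, the bounds $|\CP_t\star X^\tau|_\infty\lesssim t^{\omega_\tau/2}\Theta(X)$ and $|D\CP_t\star X^\tau|_\infty\lesssim t^{(\omega_\tau-1)/2}\Theta(X)$, control of the remaining terms through $\omega,\lambda,\gamma,\theta$, and a contraction together with a difference estimate that gives the $1$-Lipschitz bound.

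A few points to tighten:
\begin{itemize}
\item Banach's theorem only gives uniqueness in the ball of radius $K$. Uniqueness in all of $\CB_T$, which the statement claims, needs the standard restarting argument that the paper invokes.
\item Your claim that lower-order terms ``only improve the exponents'' is not right relative to the $R$-terms. For example, $Q_{k-1}(\Psi,D\Psi)$ has exponent $k\omega/2-1/2$, which is worse than the $R$-terms. These lower-order terms are exactly what the first condition in \eqref{eq:theta_assump_k} is needed for; the $R$-terms only require $k\omega>-1$.
\item In the initial-condition step one typically has $\omega=\omega_{\<Xi>}$, not $\omega<\omega_{\<Xi>}$. So there, and for $\CP_t\star X^\tau$ whenever $\omega_\tau=\omega$, you cannot rely on a $t^\epsilon$ gain via interpolation. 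You must instead use density of smooth functions in $\init$ together with uniform bounds.
\end{itemize}
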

	\begin{proof}
		For $T\in (0,1)$ and $X\in\init$,
		consider the map $\CM^X\colon \CB_T \to \CB_T$
		\begin{equ}[eq:contraction_mapping_M]
			\CM^X_t(R)
			\eqdef	\CP_t \star F(A,DA) - \CP_t \star \CS^N X =
			\int_0^t \CP_{t-s} F(A_s, D A_s) \,\mrd s
			- \CP_t\star\CS^{N} X\;,
		\end{equ}
		where we denote $A_t = R_t + \CP_t \star \CS^N_{\<Xi>} X$.
		Note that if $R$ is a fixed point of $\CM^X$, then $A$ solves \eqref{eq:A_eq}-\eqref{eq:A_ic} due to the definition of $X^\tau$ and the constant $c_\tau$.
		Standard parabolic regularity then implies that $A\in \CC^\infty((0,T]\times \T^n;E)$.
		
		We show that $ \CM_t^X$ is well-defined, maps $\CB_T$ into itself, and, for $T$ as in the statement, defines a contraction on the ball of radius $K$ of $\CB_T$.
		
		\medskip
		
		Note that $\CP\star F(A, D A) $ is a linear combination of terms of the following form: 
		\begin{enumerate}
			\item $\CP\star Q_i(\CP \star \CS^N_{\<Xi>} X+R, D\CP \star \CS^N_{\<Xi>} X+DR)$, $\,$ with $i=0,\ldots,k$,
			\item $\CP \star P_i(\CP \star \CS^N_{\<Xi>} X+R)$, $\,$ with $\, i=0,\ldots,2k+1$.
		\end{enumerate}
		First, using \eqref{eq:heat_flow_estimates} and the assumptions $\beta_\tau\in (-1,0)$ and $\delta_\tau<1$, for all $\tau\in\CT^N_{\<Xi>}$,
		\begin{equs}
			|\CP_t \star X^\tau|_{\infty}
			&\lesssim
			t^{\frac{\omega_\tau}{2}} \Theta(X) \;,\label{eq:PtX_bound}
			\\
			|D\CP_t \star X^\tau|_{\infty} &
			\lesssim
			t^{\frac{\omega_\tau-1}{2}} \Theta(X) \;.\label{eq:DPtX_bound}
		\end{equs}
		In particular,
		\begin{equ}[eq:PX_DPX_bounds]
			|\CP_t \star \CS^N_{\<Xi>}X|_\infty \lesssim t^{\frac{\omega}{2}} \Theta(X)\;,\qquad
			|D\CP_t\star \CS^N_{\<Xi>}X|_\infty
			\lesssim t^{\frac{\omega-1}{2}} \Theta(X)\;.
		\end{equ}
		
		Next, consider the terms in Cases 1 and 2 that 
		can be written as linear combinations of 
		expressions of the form $\CP_t\star Q(\CP \star X^{\tau_1},\ldots,\CP \star X^{\tau_k}, D\CP \star X^{\tau_{k+1}})$ with $\sum_{i\in[k+1]}\noise{\tau_i}\leq N$ or $\CP_t\star P(\CP \star X^{\tau_1}, \ldots, \CP \star X^{\tau_{2k+1}})$ with $\sum_{i\in[2k+1]}\noise{\tau_i}\leq N$, where $\tau_i \in \CT^N_{\<Xi>}$. 
		These terms are well defined in view of \eqref{eq:PtX_bound}--\eqref{eq:DPtX_bound}, but each such term is cancelled by the corresponding contribution in 
		$-\CP_t \star \CS^N X$ appearing in \eqref{eq:contraction_mapping_M}. 
		Indeed, the term $-\CP_t \star \CS^N X$ in \eqref{eq:contraction_mapping_M} 
		exactly cancels all contributions of this type.
		
		\medskip 
		
		We now analyse the contribution of the remaining terms in the two cases above
		to the right-hand side of \eqref{eq:contraction_mapping_M}.
		
		\textbf{Case 1.} Terms corresponding to $i\neq k$ or containing $j\ge 1$ factors of $R$ are collectively of order
		\begin{equ}
			\tilde C^{(1)}_t \eqdef \sum_{i=0}^{k-1} t^{\frac{(i+1)\omega}{2}+\frac12}\Theta(X)^{i+1}
			+ \sum_{i=0}^{k}\sum_{j\in[i+1]} t^{\frac{(i+1-j)\omega}{2}+j\theta+\frac12}\Theta(X)^{i+1-j}|R|_{\CB_T}^{j}
		\end{equ}
		in $L^\infty$, where we used the assumptions $\omega>-1/k$ and $\theta> 0$ and applied \eqref{eq:PX_DPX_bounds}.
		These terms are also of order $t^{-1/2} \tilde C^{(1)}_t$ in $\CC^1$ due to \eqref{eq:heat_flow_estimates}.
		
		The remaining terms in Case 1 
		can be written as a linear combination of 
		terms of the form $\CP_t\star Q(\CP \star X^{\tau_1},\ldots,\CP \star X^{\tau_k}, D\CP \star X^{\tau_{k+1}})$ with $\tau_i \in \CT^N_{\<Xi>}$ where
		$\sum_{i\in[k+1]}\noise{\tau_i}> N$.
		This contributes 
		$t^{(\lambda+1)/2}\Theta(X)^{k+1}$ in $L^\infty$ and $t^{\lambda/2}\Theta(X)^{k+1}$ in $\CC^1$,
		where we used the assumption $\sum_{i=1}^{k+1}\omega_{\tau_i}\geq \lambda>-1$
		and \eqref{eq:heat_flow_estimates}.
		In conclusion, Case 1 contributes
		\begin{equ}
			C^{(1)}_t \eqdef
			\tilde C^{(1)}_t + t^{\frac{\lambda+1}{2}}\Theta(X)^{k+1}
		\end{equ}
		in $L^\infty$ and $t^{-1/2}C^{(1)}_t$  in $\CC^1$.
		
		\textbf{Case 2.}
		Terms corresponding to $i\neq 2k+1$ or containing $j\ge 1$ factors of $R$ are collectively of order
		\begin{equ}
			\tilde C^{(2)}_t \eqdef \sum_{i=0}^{2k} t^{\frac{i\omega}{2}+1}\Theta(X)^{i}
			+ 
			\sum_{i=1}^{2k+1} \sum_{j\in[i]} t^{\frac{(i-j)\omega}{2}+j\theta+1}\Theta(X)^{i-j}|R|_{\CB_T}^{j}
		\end{equ}
		in $L^\infty$
		and $t^{-1/2}\tilde C^{(2)}_t$ in $\CC^1$,
		where we used $\omega>-1/k$ and $\theta> 0$ and applied \eqref{eq:PX_DPX_bounds}.
		
		The remaining terms in Case 2
		can be written as a linear combination of 
		terms of the form $\CP\star P(\CP \star X^{\tau_1}, \ldots, \CP_t \star X^{\tau_{2k+1}})$ with $\tau_i \in \CT^N_{\<Xi>}$
		where $\sum_{i\in [2k+1]}\noise{\tau_i}> N$.
		The contribution is of order $t^{\gamma/2+1}\Theta(X)^{2k+1}$ in $L^\infty$ and $t^{(\gamma+1)/2}\Theta(X)^{2k+1}$ in $\CC^1$,
		where we used the assumption $\sum_{i=1}^{2k+1}\omega_{\tau_i} \geq \gamma  > -2$
		and \eqref{eq:heat_flow_estimates}.
		In conclusion, Case 2 contributes
		\begin{equ}
			C^{(2)}_t \eqdef \tilde C^{(2)}_t +
			t^{\frac{\gamma}{2}+1}\Theta(X)^{2k+1}
		\end{equ}
		in $L^\infty$ and $t^{-1/2}C^{(2)}_t$ in $\CC^1$.
		
		\medskip
		
		Combining the above cases, we have the estimate
		\begin{equ}[eq:M_inf]
			t^{-\theta}|\CM^X_t(R)|_\infty
			\lesssim t^{-\theta} \sum_{i=1}^2 C^{(i)}_t 
			\lesssim t^\nu (1+\Theta(X)^{2k+1} + |R|_{\CB_T}^{2k+1})\;,
		\end{equ}
		where
		\begin{equ}
			\nu \eqdef \min\{(k\omega+1)/2,\,(\lambda+1)/2,\,\gamma/2+1\}-\theta 
		\end{equ}
		(we used here the estimates on $C^{(i)}_t$ above and the assumptions $\theta> 0 \ge \omega$).
		The same estimate holds for $t^{-\theta+1/2}|\CM_t^X(R)|_{\CC^1}$, so in conclusion
		\begin{equ}
			|\CM^X(R)|_{\CB_T} \lesssim T^\nu (1+\Theta(X)^{2k+1} + |R|_{\CB_T}^{2k+1})\;.
		\end{equ}
		We remark that $\nu>0$
		due to \eqref{eq:theta_assump_k}.
		It follows that there exists $\e>0$ such that, for all $K>1$ and $X\in\init$ with $\Theta(X)\leq K$,
		if $T^{\nu}\leq \e K^{-2k}$,
		then $\CM^X$ stabilises the ball $\{R\in\CB_T\,:\,|R|_{\CB_T}\leq K\}$.
		
		\medskip
		
		Furthermore, for another $\bar X\in\init$ with $\Theta(\bar X)\leq \Theta(X)$ and $\bar R\in\CB_T$,
		almost the same considerations imply that
		\begin{equs}[eq:M_X_diffs]
			|\CM^X(R) - \CM^{\bar X}(\bar R)|_{\CB_T}
			&\lesssim T^\nu \Theta(X,\bar X)(1+\Theta(X)^{2k})
			\\
			&\quad +
			T^\nu |R-\bar R|_{\CB_T}(1+\Theta(X)^{2k}+|R|_{\CB_T}^{2k} + |\bar R|_{\CB_T}^{2k})\;.\quad
		\end{equs}
		Taking $\bar X=X$, we obtain that $\CM^X$ is a contraction on the ball $\{R\in\CB_T\,:\,|R|_{\CB_T}\leq K\}$
		whenever $T^\nu < \e K^{-2k}$ for $K>1$ and $\Theta(X)\leq K$ and $\e>0$ sufficiently small.
		It follows from Banach's fixed point theorem that there exists a unique fixed point $R$ to $\CM^X$ in this ball.
		Uniqueness in all of $\CB_T$ follows in a standard way by restarting the equation.
		The claimed Lipschitz estimate $|R-\bar R|_{\CB_T}\leq \Theta(X,\bar X)$ for $\eps$ sufficiently small follows from
		taking $R$ and $\bar R$ in \eqref{eq:M_X_diffs} as the unique fixed points.
		
		It remains to prove that $\lim_{t\downarrow0}|A_t-X|_{\CC^\omega}=0$, which follows exactly as the analogous claim in the proof of \cite[Theorem~2.9]{CM25}.
	\end{proof}
	
	\section{Probabilistic estimates}
	\label{sec:prob-est-SG}
	In this section, we show that, for any random field $X$ satisfying \assu{ass:field}, there exist $N\in\N$, $\omega_{\<Xi>}\in\R$, and $\bmbeta,\bmdelta\in\R^{\CT^N}$ such that mollifications $X^\e$ converge in the state space $\init_{\omega_{\<Xi>},\bmbeta,\bmdelta}$ almost surely
	and in $L^p(\P)$ for all $p\ge 1$.
	We make this statement precise in \theo{thm:convergence-of-mollifications}.
	\begin{definition}\label{def:homogeneity}
		Consider $q_*,n$ as in \assu{ass:field} and denote 
		\begin{equ}
			\alpha = -n/q_*\;.
		\end{equ}
		Let $\CL$ be the set of labelled trees as in \defi{def:trees}. Define $|\cdot| \colon \CL\to\R$ by
		\begin{equ}[eq:homogeneity]
			|\tau| = (\alpha-2) \noise{\tau} + 2|E_\tau|-|E'_\tau|\;, \qquad \tau\in\CL\;,
		\end{equ}
		where we denote by $\noise{\tau}$ the number of leaves of $\tau$ and $E'_\tau \eqdef \{e\in E_\tau :\, \mfe(e)=I'\}$. ($|\cdot|$ on the right-hand side denotes the cardinality of the input set.)
	\end{definition}
	\begin{remark}\label{rem:homogeneity}
		A straightforward induction implies that $|\tau| = \noise{\tau}\big(\alpha +1/k\big)- 2-1/k$ for all $\tau\in\CT$.
	\end{remark}
	
	\begin{theorem}\label{thm:convergence-of-mollifications}
		Suppose that $X$ satisfies \assu{ass:field}. Let $N\in\N$,
		$\omega_{\<Xi>}<\alpha$,
		and, for $\tau\in\CT^N$,
		$\beta_\tau \in [\alpha,0)$
		and $\delta_\tau>-|\tau|/2+\beta_\tau/2$.
		Consider the mollifications $X^{\e}$ defined in \theo{thm:main}
		and denote $\Theta = \Theta_{\omega_{\<Xi>},\bmbeta,\bmdelta}$ where $ \bmbeta\eqdef(\beta_\tau)_{\tau\in\CT^N}$ and $ \bmdelta\eqdef(\delta_\tau)_{\tau\in\CT^N}$.
		Then there exists $\kappa>0$ such that, for all $p\in[1,\infty)$,
		\[
		\E \Big|\sup_{0 < \bar\e < \e \leq 1} \frac{\Theta(X^\e,X^{\bar\e})}{|\e-\bar\e|^\kappa}\Big|^p < \infty\;.
		\]
		In particular, $X^\e$
		converge as $\e\downarrow 0$ in $(\init_{\omega_{\<Xi>},\bmbeta,\bmdelta},\Theta_{\omega_{\<Xi>},\bmbeta,\bmdelta})$ in $L^p(\P)$ for all $p\in [1,\infty)$ and $\P$-almost surely.
	\end{theorem}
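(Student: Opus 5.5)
The plan is to reduce the statement to pointwise-in-$(t,x)$ moment bounds on tested quantities and then apply a Kolmogorov-type argument in $(\e,\bar\e,t,x,\lambda)$. Concretely, for each $\tau\in\CT^N_{\<Xi>}$ I will prove, for all $p<\infty$, $t\in(0,1)$, $x\in\T^n$, $\lambda\in(0,1]$ and $\phi\in\CB^r$,
\begin{equ}
\big\|\scal{X^{\e,\tau}_t,\phi^\lambda_x}\big\|_{L^p}\lesssim t^{-\delta'_\tau}\lambda^{\beta'_\tau}\;,\qquad
\big\|\scal{X^{\e,\tau}_t-X^{\bar\e,\tau}_t,\phi^\lambda_x}\big\|_{L^p}\lesssim |\e-\bar\e|^{\kappa'} t^{-\delta'_\tau-\kappa'}\lambda^{\beta'_\tau-\kappa'}\;,
\end{equ}
with $\beta'_\tau\ge\alpha$ and $\delta'_\tau=-|\tau|/2+\beta'_\tau/2$ (the scaling dictated by Remark~\ref{rem:homogeneity}). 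Here $X^{\e,\tau}$ denotes $(X^\e)^\tau$. Since the hypotheses leave a strict margin ($\omega_{\<Xi>}<\alpha$ and $\delta_\tau>-|\tau|/2+\beta_\tau/2$), a small $\kappa'$ can be absorbed. A standard Kolmogorov/Garsia–Rodemich–Rumsey argument over dyadic $\lambda$, a grid in $x$, a grid in $t$ (time regularity comes from the equation and the heat flow), and a grid in $(\e,\bar\e)$ (with $\partial_\e X^\e$ controlled) then upgrades these bounds to the supremum bound in the statement, after raising $p$. The case $\tau=\<Xi>$ follows directly from the spectral gap, since the Malliavin derivative of $\scal{X^\e,\phi^\lambda_x}$ is deterministic and its $L^{q_*}$ norm is of order $\lambda^{-n/q^*\cdot}$, more precisely $\lambda^{\alpha}$ after duality. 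Mean zero for this term follows from $X\eqdist -X$ or $\CR X\eqdist -X$.

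The core is an induction over $\noise{\tau}$ using the spectral gap inequality in the form $\|G-\E G\|_{L^p}\lesssim \|\,|\partial G|_{L^{q_*}}\|_{L^p}$, where $\partial G$ is the Fréchet derivative with respect to $X$ measured in the dual norm. The centring comes from Lemma~\ref{lem:X^tau,e_mean-zero}, which gives $\E X^{\e,\tau}=0$, by symmetry, for the trees where the expectation could diverge. For the remaining trees, the expectation is bounded deterministically using the induction hypothesis and the heat flow estimates. Differentiating \eqref{eq:X_tau_def} in the direction $h\in L^{q_*}$ gives, by multilinearity, a sum over leaves. In each summand one occurrence of $X$ in the tree is replaced by $h^\e=\chi^\e*h$, and the derivative propagates down one branch. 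I will therefore introduce auxiliary objects $X^{\e,\tau}[h]$, defined recursively, in which one leaf carries $h$. I will bound them deterministically in terms of $|h|_{L^{q_*}}$ and the random quantities $X^{\e,\sigma}$ for strict subtrees $\sigma$. The input $h$ satisfies $|\CP_t h|_\infty\lesssim t^{-n/(2q_*)}|h|_{L^{q_*}}=t^{\alpha/2}|h|_{L^{q_*}}$. This has the same scaling as $X$, but it is deterministic and lies in a genuine function space, so all products are classical. Hence $X^{\e,\tau}[h]$ obeys the same scaling bound as $X^{\tau}$, pathwise, with random prefactors given by the norms of lower-order trees. By the induction hypothesis those prefactors have all moments, and Hölder's inequality closes the step. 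The test against $\phi^\lambda_x$ is handled by duality: taking the sup over $h$ gives a function $g\in L^{q^*}$, and its norm is bounded using the heat kernel decay of the linearised propagation together with $\lambda^{\beta}$.

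For the differences in $\e$, the same scheme applies to $X^{\e,\tau}-X^{\bar\e,\tau}$. Telescoping places one factor $X^\e-X^{\bar\e}$, or $h^\e-h^{\bar\e}$, somewhere in the tree. The estimate $|\CP_t(\chi^\e-\chi^{\bar\e})*f|\lesssim |\e-\bar\e|^{\kappa'}t^{-\kappa'/2}|\CP_{t/2}f|$ costs a small power of $t$, which is absorbed by the margin.

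The main obstacle is the bookkeeping in the derivative bound. Since $\delta_\tau>1/2$ can occur, a naive $L^\infty$ control of $\CP\star X^{\sigma}$ in time integrals may create non-integrable singularities. I will therefore track, for each subtree, the weighted bound $|\CP_t\star X^\sigma|_\infty\lesssim t^{\omega_\sigma/2}$ and $|D\CP_t\star X^\sigma|_\infty\lesssim t^{(\omega_\sigma-1)/2}$, as in \eqref{eq:PtX_bound}--\eqref{eq:DPtX_bound}, with $\omega_\sigma$ matching $|\sigma|+2$. I will then check that subcriticality ($\alpha>-1/k$) makes every time integral $\int_0^t s^{(\cdot)}\,\mrd s$ in the recursion for $X^{\e,\tau}[h]$ converge. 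This is where $q_*>nk$ is used, and the verification is done by induction on the recursive structure of Definition~\ref{def:CT_trees}.
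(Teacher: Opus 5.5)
\textbf{Verdict: genuine gap.} Your outline matches the paper's architecture: Fourier truncation and the spectral gap applied tree by tree, centring from Lemma~\ref{lem:X^tau,e\_mean-zero}, the derivative replacing one leaf by $h$, induction with H\"older, telescoping in $\e$, and Kolmogorov at the end. The problem is how you control the $h$-dependent objects, and as written the argument fails in exactly the range $-1/k<\alpha\le -1/(k+1)$ that the theorem is about.

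\emph{Where it fails.} You feed $h$ in through $|\CP_t h|_\infty\lesssim t^{\alpha/2}|h|_{L^{q_*}}$ and then bound $X^{\e,\tau}[h]$ pathwise in $L^\infty$.
\begin{itemize}
\item For an $h$-carrying subtree $\sigma$ this only gives the deterministic scaling $|X^{\e,\sigma}[h]_r|_\infty\lesssim r^{|\sigma|/2}$.
\item At the next vertex you must integrate $\int_0^t (t-r)^{-1/2} r^{|\sigma|/2}\,\mrd r$.
\item For $\sigma=I'(\<Xi>)\prod_{i=1}^k I(\<Xi>)$ one has $|\sigma|=(k+1)\alpha-1\le -2$ as soon as $\alpha\le-1/(k+1)$, so this integral diverges.
\item A concrete instance: $k=1$, $\alpha\in(-1,-1/2]$, $N\ge 3$, and $\tau=I'(I'(\<Xi>)I(\<Xi>))I(\<Xi>)$ with $h$ at a leaf of the inner subtree.
\end{itemize}
The weighted bound $|\CP_t\star X^\sigma|_\infty\lesssim t^{(|\sigma|+2)/2}$ you plan to track is correct for $h$-free branches. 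There it comes from the probabilistic $\CC^\alpha$ estimate with weight $r^{(|\sigma|-\alpha)/2}$. Nothing in your scheme produces the analogue for the branch carrying $h$, so your planned check that every time integral converges would fail.

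The same defect affects the final test. An $L^\infty$ bound only gives the case $\beta=0$, i.e.\ $t^{|\tau|/2}$. This is weaker than $t^{(|\tau|-\beta)/2}\lambda^\beta$ once $\lambda>\sqrt t$. Yet Theorem~\ref{thm:main} needs $\beta_\tau=\alpha$ with $\delta_\tau$ just above $(\alpha-|\tau|)/2$. Your remark about ``duality with heat kernel decay'' points the right way but is not an argument.

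\emph{The missing idea.} $h\in L^{q_*}$ should play the role of $X\in\CC^\alpha$ through its integrability, never through $L^\infty$.
\begin{itemize}
\item Keep the branch carrying $h$ in $L^{q_*}$ all the way up. The heat semigroup contracts $L^{q_*}$, and a product of one $L^{q_*}$ factor with $L^\infty$ factors stays in $L^{q_*}$.
\item Put all $h$-free factors in $L^\infty$ via their $\CC^\alpha$ bounds.
\item Only at the last kernel use $|D^a\CP_{t-r} f|_{L^{q}}\lesssim (t-r)^{(\alpha-\beta-|a|)/2}|f|_{L^{q_*}}$ with $q=-n/\beta$.
\end{itemize}
Accordingly, the paper's induction hypothesis is placed on the derivative objects themselves:
\[
\Big\|\sup_{|h|_{L^{q_*}}=1}\big|Y^{\tau^h}_t\big|_{L^{-n/\beta}}\Big\|_{L^p}\lesssim t^{-\delta-\kappa/2}|\e-\bar\e|^{\kappa}\;,\qquad \beta\in[\alpha,0]\;,\quad \delta>-|\tau|/2+\beta/2\;.
\]
Used at $\beta=\alpha$, this gives the weight $r^{(|\sigma|-\alpha)/2}$ for the $h$-carrying branch $\sigma$. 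It is integrable because $(|\sigma|-\alpha)/2\ge (k\alpha-1)/2>-1$, and this is exactly where $\alpha>-1/k$ enters. The factor $\lambda^\beta$ then comes from the embedding $L^{-n/\beta}\hookrightarrow\CC^\beta$.

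The time increments needed for the Kolmogorov step in $t$ are obtained the same way, via the spectral gap with $\CP_t-\CP_s$ at the root. They do not follow from ``the equation and the heat flow'' alone, since a deterministic bound would again only give $L^\infty$ control.
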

	
	\begin{remark}
		The exponent $\delta_\tau$ in Theorem~\ref{thm:convergence-of-mollifications}
		is allowed to be negative and this causes no issues with respect to the definition of the
		state space $\CI$.
		Indeed, the smooth functions
		$X^\tau_t$ are of the correct order as $t\downarrow0$ due to the condition $    \delta_\tau>-|\tau|/2+\beta_\tau/2$.
		For instance, for the first-generation
		trees $\tau= I'(\<Xi>) \prod_{i=1}^k I(\<Xi>), \, \prod_{i=1}^{2k+1} I(\<Xi>)$,
		this condition automatically forces $\delta_\tau>0$
		because for these trees, we have
		$|\tau|=(k+1)\alpha-1$ and $|\tau|=(2k+1)\alpha$ respectively, and
		since $\beta_\tau\geq\alpha$ and $\alpha<0$,
		\[
		-|\tau|/2+\beta_\tau/2
		\geq \frac{1-k\alpha}{2}>0\;,
		\quad\text{respectively}\quad
		-|\tau|/2+\beta_\tau/2
		\geq -k\alpha>0\;.
		\]
		Thus negative values of $\delta_\tau$ can arise only for `deeper trees', which indeed
		vanish at $t=0$, and the order of decay is consistent with the lower bound on $\delta_\tau$.
	\end{remark}
	
	To prove \theo{thm:convergence-of-mollifications}, we use the following definitions and lemmas.
	
	\medskip
	
	We say that a function $F\colon \CD'(\T^n; E)\to \R$ is \emph{cylindrical} if there exist $m\in\N$, $\phi_1,\ldots,\phi_m \in \CD(\T^n;E)$, and a smooth function $f\colon \R^m\to \R$ such that $F(\xi)=f(\xi(\phi_1),\ldots,\xi(\phi_m))$.
	In this case we define the functional derivative $\frac{\delta F}{\delta \xi}\colon \CD'(\T^n; E)\to \CD(\T^n; E)$ by
	\begin{equ}
		\frac{\delta F}{\delta \xi}[\xi] = \sum_{i=1}^m \d_i f(\xi(\phi_1),\ldots,\xi(\phi_m))\,\phi_i\;.
	\end{equ}
	Throughout, for brevity, we write $\frac{\delta F}{\delta \xi}$ instead of $\frac{\delta F}{\delta \xi}[\xi]$.
	
	\begin{remark}
		The functional derivative $\frac{\delta F}{\delta \xi}$ introduced above is well-defined; that is, it does not depend on the chosen representation of $F$. 
		Indeed, one can characterise it by the G\^ateaux derivative identity
		\begin{equ}
			\bigl\langle \tfrac{\delta F}{\delta \xi}, h \bigr\rangle
			= \lim_{\e \downarrow 0} 
			\difffrac{F(\xi + \e h) - F(\xi)}{\e}\;,
		\end{equ}
		for every $h \in \mathcal{D}'(\T^n; E)$,
		so that the right-hand side depends only on $F$ itself.
	\end{remark}
	
	\begin{definition}
		\label{def:SG}
		Let $\|\cdot\|$ be a norm on $\CD(\T^n;E)$. 
		We say that a random Schwartz distribution $\xi \in \CD'(\T^n;E)$ 
		satisfies the 
		\emph{spectral gap inequality} 
		with norm $\|\!\cdot\!\|$ if 
		\begin{equ}[eq:SG-inequality]
			\E^{\frac12}\big|F(\xi)\big|^2 
			\,\lesssim\,
			\big|\E[F(\xi)]\big| 
			+ \E^{\frac12}\Big\|\frac{\delta F}{\delta \xi}\Big\|^2\,,
			\tag{$\mathrm{SG}_2$}
		\end{equ}
		for all bounded cylindrical functions $F$.
	\end{definition}
	
	\begin{remark}
		We later show in Lemma~\ref{lem:SG-extension} that \eqref{eq:SG-inequality} implies analogous bounds
		with $L^p$-norms and for $F$ of polynomial growth.
		In our applications, it would in fact suffice to assume the spectral gap inequality~\eqref{eq:SG-inequality} just for polynomial cylindrical functions $F$, with the implicit constant possibly depending on the degree of the polynomial (see the proofs of \lem{lem:X^eps_Ceta_moment-bounds} and \theo{thm:convergence-of-mollifications}).
		The only change would be that Lemmas~\ref{lem:SG_2-->SG_p} and \ref{lem:SG-extension} would hold for $p=2^j\geq 2$ dyadic integers.
	\end{remark}
	
	\begin{lemma}\label{lem:SG_2-->SG_p}
		Suppose that the spectral gap inequality \eqref{eq:SG-inequality}
		holds in the sense of Definition~\ref{def:SG}.
		Then, for every
		$p\in[2,\infty)$ and bounded cylindrical function $F$,
		\begin{equ}\label{eq:SG_p-inequality}
			\E^{\frac1p}|F(\xi)|^p
			\lesssim
			|\E[F(\xi)]|
			+
			p\,
			\E^{\frac1p}
			\Big\|
			\frac{\delta F}{\delta\xi}
			\Big\|^p \;,
		\end{equ}
		where the implicit constant depends only on the constant in \eqref{eq:SG-inequality}.
	\end{lemma}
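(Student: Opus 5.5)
The plan is to apply \eqref{eq:SG-inequality} to powers of $F$ and bootstrap from $p$ to $2p$. By homogeneity and the triangle inequality, I first reduce to the centred case $\E[F(\xi)]=0$. Indeed, $F-\E F$ is again bounded cylindrical with the same functional derivative, and $\E^{1/p}|F|^p\le |\E F|+\E^{1/p}|F-\E F|^p$. So it suffices to show
\[
\E^{\frac1p}|F|^p\lesssim p\,\E^{\frac1p}\Big\|\frac{\delta F}{\delta\xi}\Big\|^p
\]
for centred $F$. The case $p=2$ is the hypothesis.

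\textbf{Doubling step.} Fix $p\ge 2$ and set $G=|F|^{p/2}$. To keep $G$ smooth and cylindrical, I would work with $(F^2+\eta)^{p/4}$ and let $\eta\downarrow0$ by dominated convergence; since $F$ is bounded, $G$ is bounded. The chain rule gives $\frac{\delta G}{\delta\xi}=\frac p2 |F|^{p/2-1}\operatorname{sgn}(F)\frac{\delta F}{\delta\xi}$, so $\|\frac{\delta G}{\delta\xi}\|\le \frac p2|F|^{p/2-1}\|\frac{\delta F}{\delta\xi}\|$. Applying \eqref{eq:SG-inequality} to $G$ yields
\[
\E|F|^{p}\lesssim (\E|F|^{p/2})^2+p^2\,\E\Big[|F|^{p-2}\Big\|\frac{\delta F}{\delta\xi}\Big\|^2\Big].
\]
I then bound the last term by Hölder with exponents $\frac{p}{p-2}$ and $\frac p2$, giving $(\E|F|^p)^{1-2/p}(\E\|\delta F\|^p)^{2/p}$. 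Young's inequality absorbs $(\E|F|^p)^{1-2/p}$ into the left side, producing a contribution of order $p^{p}\E\|\delta F\|^p$ up to $C^{p}$. For the first term I use induction: $\E^{1/(p/2)}|F|^{p/2}$ is controlled by the $p/2$ (or interpolated) estimate, and $\E^{2/p}\|\delta F\|^{p/2}\le \E^{1/p}\|\delta F\|^p$ by Jensen.

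\textbf{Main obstacle.} The difficulty is keeping the constant linear in $p$ rather than letting it compound geometrically across the doublings. Naively, each step multiplies the constant by a fixed $C$, giving $C^{\log p}$, which is polynomial in $p$ but not the claimed $p^1$. To avoid this, I would arrange the recursion carefully. Write $a_p=\E^{1/p}|F|^p$ and $b_p=\E^{1/p}\|\delta F\|^p$. The step gives
\[
a_p^p\le C a_{p/2}^p+C p^2 a_p^{p-2}b_p^2,
\]
so that $a_p\le C^{1/p}a_{p/2}+C' p\, b_p$ (after solving the quadratic-type inequality in $a_p$). The factor $C^{1/p}$ multiplies only over dyadic $p$, and $\prod_j C^{2^{-j}}<\infty$. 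The linear-in-$p$ terms sum geometrically, since $\sum_j 2^{-j}p\lesssim p$ because $b_{p/2^j}\le b_p$.

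Finally, non-dyadic $p$ follow by doing the first step from an arbitrary $p_0\in[2,4)$, or by using $a_p\le a_{p'}$ with $p'\in[p,2p)$ dyadic and $b_{p'}$ replaced via the same argument started at $p$. Since $F$ is bounded, all moments are finite, so the absorption step is legitimate.
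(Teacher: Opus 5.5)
Your proposal is correct and follows essentially the paper's route: apply \eqref{eq:SG-inequality} to the regularised $(F^2+\epsilon)^{p/4}$, then use H\"older and absorption to get $a_p\le C^{O(1/p)}a_{p/2}+C p\, b_p$ (the paper does the absorption by a two-case split rather than Young). Iterating, the accumulated constant is $C^{O(\sum_j 2^j/p)}=O(1)$ and the linear terms sum geometrically; the paper avoids your centring step and the dyadic/non-dyadic distinction by iterating downward directly from arbitrary $p$ until $p/2^m\in[1,2)$ and then using $A_{p/2^m}\le A_2\le C(|\E F|+B_2)$. That is exactly your first option for non-dyadic $p$; your second option, passing to a larger dyadic $p'$, would not work as stated, since $b_{p'}\ge b_p$ goes the wrong way.
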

	
	\begin{remark}
		We do not use the exact linear dependence on $p$ in \eqref{eq:SG_p-inequality}, but we record it for completeness.
	\end{remark}
	
	\begin{proof}
		Denote
		\[
		A_p
		=
		\E^{\frac1p}|F(\xi)|^p\;,
		\qquad
		B_p
		=
		\E^{\frac1p}
		\Big\|
		\frac{\delta F}{\delta\xi}
		\Big\|^p\;.
		\]
		Let $C\ge1$ be such that \eqref{eq:SG-inequality} holds with
		implicit constant $C$.
		
		We first claim that, for every $p\geq 2$,
		\begin{equ}\label{eq:SG-recursion}
			A_p
			\leq
			(2C)^{2/p} A_{p/2}
			+
			Cp B_p\;.
		\end{equ}
		For $\epsilon>0$, we apply \eqref{eq:SG-inequality} to
		\[
		G_\epsilon(\xi)
		\eqdef
		\big(F(\xi)^2+\epsilon\big)^{p/4}\;.
		\]
		By the chain rule,
		\[
		\frac{\delta G_\epsilon}{\delta\xi}
		=
		\frac p2
		F(\xi)
		\big(F(\xi)^2+\epsilon\big)^{\frac p4-1}
		\frac{\delta F}{\delta\xi}\;.
		\]
		Since
		\[
		|F|
		\big(F^2+\epsilon\big)^{\frac p4-1}
		\leq
		\big(F^2+\epsilon\big)^{\frac{p-2}{4}}\;,
		\]
		letting $\epsilon\downarrow0$ in the resulting spectral gap
		estimate gives
		\begin{equ}\label{eq:SG-recursion-pre}
			A_p^{p/2}
			\leq
			C A_{p/2}^{p/2}
			+
			\frac{Cp}{2}
			\Big(
			\E\Big[
			|F(\xi)|^{p-2}
			\Big\|
			\frac{\delta F}{\delta\xi}
			\Big\|^2
			\Big]
			\Big)^{1/2}\;.
		\end{equ}
		By H\"older's inequality,
		\[
		\Big(
		\E\Big[
		|F(\xi)|^{p-2}
		\Big\|
		\frac{\delta F}{\delta\xi}
		\Big\|^2
		\Big]
		\Big)^{1/2}
		\leq
		A_p^{\frac p2-1} B_p\;,
		\]
		and hence
		\begin{equ}\label{eq:SG-recursion-pre2}
			A_p^{p/2}
			\le
			C A_{p/2}^{p/2}
			+
			\frac{Cp}{2}A_p^{\frac p2-1}B_p\;.
		\end{equ}
		
		If $A_p\leq Cp B_p$, then \eqref{eq:SG-recursion} is immediate.
		Otherwise $A_p>CpB_p$, so
		\[
		\frac{Cp}{2}A_p^{\frac p2-1}B_p
		\leq
		\frac12 A_p^{p/2}\;.
		\]
		It follows from \eqref{eq:SG-recursion-pre2} that $
		A_p^{p/2}
		\leq
		2C A_{p/2}^{p/2}$,
		and therefore
		$A_p
		\leq
		(2C)^{2/p}A_{p/2}$,
		proving \eqref{eq:SG-recursion}.
		
		We now iterate \eqref{eq:SG-recursion}.
		Denote $p_j=\frac{p}{2^j}$ for $j\geq 0$ and let $m\in\N$ be such that
		$p_m \in[1,2)$.
		Using $B_{p_j}\le B_p$, we obtain
		\begin{equation}\label{eq:SG_iteration}
			A_p
			\leq
			(2C)^{\sum_{j=0}^{m-1}2/p_j} A_{p_m}
			+
			Cp B_p
			\sum_{j=0}^{m-1}
			2^{-j}
			(2C)^{\sum_{\ell=0}^{j-1}2/p_\ell}\;.
		\end{equation}
		Since
		\[
		\sum_{j=0}^{m-1}\frac{2}{p_j}
		=
		\frac{2}{p}\sum_{j=0}^{m-1}2^j
		=
		\frac{2(2^m-1)}{p}
		<2\;,
		\]
		all the powers of $2C$ appearing in \eqref{eq:SG_iteration}
		are bounded by $(2C)^2$, uniformly in $p$.
		Moreover $\sum_{j=0}^{m-1}2^{-j}\leq 2$,
		so we obtain
		\begin{equ}\label{eq:SG_iteration_conclusion}
			A_p
			\lesssim
			A_{p_m}
			+
			p B_p\;,
		\end{equ}
		with an implicit constant depending only on $C$.
		Since $p_m<2$, monotonicity of the $L^r(\P)$-norms together with \eqref{eq:SG-inequality} applied to $F$ yields
		\[
		A_{p_m}\leq A_2 \leq
		C\big(|\E[F(\xi)]|+B_2\big)
		\leq
		C\big(|\E[F(\xi)]|+B_p\big)\;.
		\]
		Combining this with \eqref{eq:SG_iteration_conclusion} yields
		$A_p
		\lesssim
		|\E[F(\xi)]|
		+
		p B_p$,
		which is \eqref{eq:SG_p-inequality}.
	\end{proof}
	
	\begin{lemma}\label{lem:SG-extension}
		Suppose that $\xi$ satisfies the spectral gap inequality
		\eqref{eq:SG-inequality} in the sense of Definition~\ref{def:SG}.
		Then the following statements hold.
		
		\begin{enumerate}[label=(\roman*)]
			
			\item\label{item:linear-moments}
			For every $\phi\in\CD(\T^n;E)$,
			$\xi(\phi)\in L^p(\P)$ for all $p\in[1,\infty)$.
			If, moreover, the norm $\|\cdot\|$ in \eqref{eq:SG-inequality} is continuous on
			$\CD(\T^n;E)$, then $\scal{\E\xi,\phi}
			\eqdef \E[\xi(\phi)]$
			defines an element $\E\xi\in\CD'(\T^n;E)$.
			
			\item\label{item:SG-extension-general}
			Let $p\in [2,\infty)$ and $F$ be a cylindrical function such that
			$F(\xi)\in L^1(\P)$
			and
			$\|\frac{\delta F}{\delta\xi}\|\in L^p(\P)$.
			Then $F(\xi)\in L^p(\P)$ and
			\begin{equ}\label{eq:SG_p-extension}
				\|F(\xi)\|_{L^p}
				\lesssim
				|\E F(\xi)|
				+
				p\,
				\Big\|
				\Big\|\frac{\delta F}{\delta\xi}\Big\|
				\Big\|_{L^p}\;,
				\tag{$\mathrm{SG}_p$}
			\end{equ}
			where the implicit constant depends only on the constant in \eqref{eq:SG-inequality}.
			
			\item\label{item:polynomial-cylindrical}
			\eqref{eq:SG_p-extension} holds for all $p\in [2,\infty)$ and cylindrical $F(\xi)=f(\xi(\phi_1),\ldots,\xi(\phi_m))$ for which
			$f$ and its first derivative have at most polynomial growth.
			For every such $F$, both $F(\xi)$ and $\|\frac{\delta F}{\delta\xi}\|$ are in $L^p(\P)$ for all $p\in [1,\infty)$.
		\end{enumerate}
	\end{lemma}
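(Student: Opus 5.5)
The plan is to obtain all three items from Lemma~\ref{lem:SG_2-->SG_p} by truncation and approximation arguments.

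\textbf{Item (ii).} Fix a cylindrical $F=f(\xi(\phi_1),\ldots,\xi(\phi_m))$ with $F(\xi)\in L^1(\P)$ and $\|\frac{\delta F}{\delta\xi}\|\in L^p(\P)$. Choose a smooth odd nondecreasing $\psi_M\colon\R\to\R$ with $\psi_M(x)=x$ for $|x|\le M$, $|\psi_M|\le 2M$ and $0\le\psi_M'\le 1$. Set $F_M\eqdef\psi_M\circ F$. This is a bounded cylindrical function with
\[
\frac{\delta F_M}{\delta\xi}=\psi_M'(F)\frac{\delta F}{\delta\xi},
\qquad
\Big\|\frac{\delta F_M}{\delta\xi}\Big\|\le\Big\|\frac{\delta F}{\delta\xi}\Big\|.
\]
Lemma~\ref{lem:SG_2-->SG_p} then gives
\[
\|F_M(\xi)\|_{L^p}\lesssim|\E F_M(\xi)|+p\,\Big\|\Big\|\frac{\delta F}{\delta\xi}\Big\|\Big\|_{L^p},
\]
uniformly in $M$. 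Since $|F_M|\le|F|\in L^1(\P)$ and $F_M\to F$ pointwise, dominated convergence gives $\E F_M(\xi)\to\E F(\xi)$. Fatou's lemma on the left-hand side then yields $F(\xi)\in L^p(\P)$ together with \eqref{eq:SG_p-extension}.

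\textbf{Item (i).} This is the main obstacle, because a priori we do not even know that $\xi(\phi)\in L^1(\P)$. The plan is to get integrability from the bounded case before passing to the limit. Write $Y=\xi(\phi)$, fix $p\ge 2$, and set $G_M=\psi_M(Y)$, so that $\frac{\delta G_M}{\delta\xi}=\psi_M'(Y)\phi$ and the right-hand side gradient term is at most $p\|\phi\|$. Lemma~\ref{lem:SG_2-->SG_p} gives $\|G_M\|_{L^p}\le C(|\E G_M|+p\|\phi\|)$, but $\E G_M$ is not yet controlled.

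To control it, use the $L^2$ version on the same function:
\[
\Var(G_M)\le\E G_M^2\lesssim(\E G_M)^2+\|\phi\|^2
\]
is useless as it stands. Instead I would apply \eqref{eq:SG-inequality} to the centred variable $G_M-\E G_M$; this is legitimate, since adding a constant to a cylindrical function leaves its derivative unchanged. The mean of this variable is zero, so the inequality gives $\Var(G_M)\lesssim\|\phi\|^2$ uniformly in $M$. Chebyshev then produces a uniform concentration bound: $\P(|G_M-\E G_M|\ge r)\lesssim\|\phi\|^2/r^2$.

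Suppose $|\E G_M|$ were unbounded along a subsequence. Since $G_M\to Y$ almost surely and $Y$ is finite almost surely, we have $\P(|G_M|\le R)\ge\frac12$ for some fixed $R$ and all large $M$. This contradicts the concentration of $G_M$ around a mean of size much larger than $R+C\|\phi\|$. Hence $\sup_M|\E G_M|<\infty$, and Fatou gives $Y\in L^p(\P)$.

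For the distributional claim, $\phi\mapsto\E\xi(\phi)$ is linear. It is bounded by $\E|\xi(\phi)|\le\|\xi(\phi)\|_{L^2}$. The same variance bound, combined with a bound on the mean (obtained by repeating the argument above with constants tracked, which gives $|\E\xi(\phi)|\lesssim\|\phi\|$ up to the size of the median), should give continuity once $\|\cdot\|$ is continuous on $\CD$. If this direct route is awkward, an alternative is to use that $\phi\mapsto\xi(\phi)$ is continuous into $L^0(\P)$ and that the family is uniformly integrable on bounded sets thanks to the uniform $L^2$-variance bound.

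\textbf{Item (iii).} Let $f$ and $\nabla f$ have polynomial growth. By item (i), each $\xi(\phi_i)$ has all moments, so $F(\xi)\in L^1(\P)$ and
\[
\Big\|\frac{\delta F}{\delta\xi}\Big\|\le\sum_i|\partial_if(\ldots)|\,\|\phi_i\|\in L^p(\P)
\]
for all $p$. Item (ii) then applies directly and gives \eqref{eq:SG_p-extension}.
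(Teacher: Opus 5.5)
Your arguments for (ii), (iii) and for the moment bound in (i) are correct and essentially the paper's. You truncate, apply \eqref{eq:SG-inequality} to the centred truncation to get a variance bound uniform in the truncation level, and use that $\xi(\phi)$ stays below a fixed level with probability at least $\frac12$ to bound the truncated means (your Chebyshev contradiction is the paper's direct estimate on that event). You then conclude with Lemma~\ref{lem:SG_2-->SG_p} and Fatou.

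The gap is the second claim of (i), that $\phi\mapsto\E\xi(\phi)$ is continuous on $\CD$; neither of your routes proves it. The spectral gap inequality controls only fluctuations, never the mean. A deterministic $\xi\in\CD'$ satisfies \eqref{eq:SG-inequality} with constant $1$ for any norm, so no bound like $|\E\xi(\phi)|\lesssim\|\phi\|$ can hold. Your first route gives $|\E\xi(\phi)|\le R_\phi+C\|\phi\|$ with $\P(|\xi(\phi)|\le R_\phi)\ge\frac12$. This says nothing about continuity unless you show that $R_{\phi_j}$ can be taken to tend to $0$ along $\phi_j\to0$, and you do not address that. Your alternative rests on a false step. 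The uniform variance bound makes $\xi(\phi)-\E\xi(\phi)$ uniformly integrable on bounded sets, but not $\xi(\phi)$ itself. For that you would need $\E\xi(\phi)$ to be bounded on bounded subsets of $\CD$, and for a linear functional on the Fr\'echet space $\CD(\T^n;E)$ this is already equivalent to the continuity being proved.

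The missing input is that $\xi\in\CD'$ almost surely, used as in the paper. Having shown $\xi(\phi)\in L^2$, let the truncation level tend to infinity in your variance bound (dominated convergence) to obtain $\|\xi(\phi)-\E\xi(\phi)\|_{L^2}\lesssim\|\phi\|$. If $\phi_j\to0$ in $\CD$, continuity of $\|\cdot\|$ gives $\xi(\phi_j)-\E\xi(\phi_j)\to0$ in probability. Meanwhile $\xi(\phi_j)\to0$ almost surely, since $\xi$ is a.s.\ a distribution. Subtracting, the deterministic numbers $\E\xi(\phi_j)$ tend to $0$ in probability, hence in $\R$. This same almost sure convergence is what would make $R_{\phi_j}\to0$ in your first route.
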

	
	\begin{proof}
		\ref{item:linear-moments} It suffices to consider $p\geq 2$.
		Let $Z = \xi(\phi)$.
		Choose $M<\infty$ such that
		\[
		\P(A)\geq 1/2\;,
		\]
		where we define the event $A\eqdef\{|Z|\leq M\}$.
		For $R\geq M$, let $\rho_R\colon\R\to\R$ be a smooth bounded function
		such that
		\[
		\rho_R(x)=x \quad\text{for } |x|\leq R\;,
		\qquad
		|\rho_R'(x)|\leq1\;,
		\qquad
		|\rho_R(x)|\leq|x|\;,
		\]
		and set
		$Z_R = \rho_R(Z)$.
		Then $Z_R$ is a bounded cylindrical function with a bounded derivative.
		Applying
		\eqref{eq:SG-inequality} to $Z_R-\E Z_R$ yields
		\begin{equ}\label{eq:ZR-variance}
			\|Z_R-\E Z_R\|_{L^2}
			\lesssim
			\|\phi\|
		\end{equ}
		because $\frac{\delta Z_R}{\delta\xi}
		=
		\rho_R'(Z)\phi$
		and $|\rho_R'|\leq1$.
		
		We next control $\E Z_R$. On the event $A$,
		we have $Z_R=Z$, and hence $|Z_R|\leq M$. Therefore, on $A$,
		\[
		|Z_R-\E Z_R|
		\geq
		\big(|\E Z_R|-M\big)_+\;,
		\]
		where $x_+ = \max\{x,0\}$.
		Since $\P(A)\geq1/2$,
		\[
		\|Z_R-\E Z_R\|_{L^2}^2
		\geq
		\frac12\big(|\E Z_R|-M\big)_+^2\;.
		\]
		Together with \eqref{eq:ZR-variance}, we obtain
		\[
		|\E Z_R|
		\lesssim
		M+\|\phi\|
		\]
		uniformly in $R$.
		Applying \eqref{eq:SG_p-inequality} to the bounded cylindrical
		function $Z_R$, for $p\geq2$, therefore gives
		\[
		\|Z_R\|_{L^p}
		\lesssim
		M+p\|\phi\|\;,
		\]
		uniformly in $R$. Since $Z_R\to Z$ almost surely as $R\to\infty$,
		Fatou's lemma yields
		\[
		\|Z\|_{L^p}
		\lesssim
		M+p\|\phi\|\;,
		\]
		which proves $\xi(\phi)\in L^p(\P)$.
		Assume now that $\|\cdot\|$ is continuous on $\CD(\T^n;E)$.
		Set
		\[
		m(\phi) = \E[\xi(\phi)]\;,
		\]
		which is a linear functional on $\CD(\T^n;E)$.
		By dominated convergence, we can take the $R\to\infty$ limit in \eqref{eq:ZR-variance}
		and obtain
		\begin{equ}\label{eq:centered-linear-SG}
			\|\xi(\phi)-m(\phi)\|_{L^2}
			\lesssim \|\phi\|\;.
		\end{equ}
		We claim that $m$ is continuous on $\CD$.
		Indeed, suppose
		$\phi_j\to 0$ in $\CD$.
		Then, by \eqref{eq:centered-linear-SG},
		\[
		\|\xi(\phi_j)-m(\phi_j)\|_{L^2}
		\lesssim\|\phi_j\|\to0\;,
		\]
		where we used the continuity of $\|\cdot\|$ on $\CD$.
		It follows that $\xi(\phi_j)-m(\phi_j) \to 0$ in probability.
		Moreover, $\xi(\phi_j)\to0$ almost surely since $\xi\in\CD'$ almost surely.
		Therefore the deterministic sequence $m(\phi_j)$ converges to $0$ in probability, hence in $\R$.
		It follows that $m$ is continuous on $\CD$, proving \ref{item:linear-moments}.
		
		\ref{item:SG-extension-general} Let $\rho_R$ be as
		above and denote $F_R\eqdef\rho_R(F)$.
		Then $F_R$ is a bounded cylindrical function and
		\[
		\frac{\delta F_R}{\delta\xi}
		=
		\rho_R'(F)\frac{\delta F}{\delta\xi}\;.
		\]
		Consequently, by \eqref{eq:SG_p-inequality},
		\[
		\|F_R\|_{L^p}
		\lesssim
		|\E F_R|
		+
		p\,
		\Big\|
		\Big\|\frac{\delta F}{\delta\xi}\Big\|
		\Big\|_{L^p}\,.
		\]
		Since $|\rho_R(F)|\leq|F|$ and $F\in L^1$, dominated convergence implies $\E F_R\to \E F$.
		Fatou's lemma therefore yields both $F\in L^p$ and
		\eqref{eq:SG_p-extension}.
		
		\ref{item:polynomial-cylindrical} Suppose
		\[
		F(\xi)=
		f\big(\xi(\phi_1),\ldots,\xi(\phi_m)\big)
		\]
		and that $f$ and its first derivatives have polynomial growth.
		By \ref{item:linear-moments}, each $\xi(\phi_i)$ has moments of every
		finite order, and hence so does $F(\xi)$. Moreover,
		\[
		\Big\|\frac{\delta F}{\delta\xi}\Big\|
		\leq
		\sum_{i=1}^m
		|\partial_i f(\xi(\phi_1),\ldots,\xi(\phi_m))|
		\,\|\phi_i\|\;,
		\]
		so the functional derivative also has moments of every finite order.
		Thus \ref{item:SG-extension-general} applies for every $p\geq2$,
		proving \ref{item:polynomial-cylindrical}.
	\end{proof}
	
	\begin{remark}\label{rem:centered}
		Under Assumption~\ref{ass:field},
		Lemma~\ref{lem:SG-extension}\ref{item:linear-moments}
		implies that $\E X\in\CD'(\T^n;E)$ and,
		by stationarity, $\E X$ is translation invariant and hence constant.
	\end{remark}
	
	\begin{lemma}\label{lem:X^eps_Ceta_moment-bounds}
		Let $q > 1$ with H\"older conjugate $q'$.
		Let $X$ be an $E$-valued random field on $\T^n$ satisfying the spectral gap inequality \eqref{eq:SG-inequality} with $L^{q'}$-norm in the sense of Definition~\ref{def:SG}.
		Suppose $\E X$ (well-defined by Lemma~\ref{lem:SG-extension}\ref{item:linear-moments}) is a constant.
		Let $X^{\e}$ be a mollification at scale $\e>0$ of $X$ defined in \theo{thm:main}
		and denote $X^0 = X$.
		
		Then, for any $\nu\in[0,1)$, 
		$\eta<-n/q-\nu$, 
		and $p\ge 1$, we have
		\begin{equ}
			\E\Big|\sup_{\e\in[0,1]}|X^\e|_{\CC^\eta}\Big|^p <\infty\;,
			\qquad
			\E\Big|\sup_{0\leq\bar\e<\e\leq 1}\frac{|X^\e-X^{\bar\e}|_{\CC^\eta}}{|\e-\bar\e|^\nu}\Big|^p <\infty\;.
		\end{equ}
	\end{lemma}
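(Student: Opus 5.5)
The argument is a Kolmogorov-type one: pointwise moment bounds on tested quantities from the spectral gap, then Besov embedding to pass to suprema. Since $\E X$ is a constant $c$, we have $X^\e - X^{\bar\e} = (X-c)^\e - (X-c)^{\bar\e}$ (because $\chi^\e * c = c$), and constants lie in $\CC^\eta$ with bounded norm. So for both claims we may assume $\E X = 0$, handling the constant separately in the first bound.

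Fix a test function $\psi\in\CD(\T^n;E)$ and consider the linear cylindrical functional $F(X)=\scal{X^\e,\psi}=\scal{X,\chi^\e*\psi}$, using the symmetry of $\chi$. Its functional derivative is the deterministic function $\chi^\e*\psi$. Lemma~\ref{lem:SG-extension}\ref{item:polynomial-cylindrical} (with $\E F=0$) then gives
\[
\|\scal{X^\e,\psi}\|_{L^p}\lesssim p\,|\chi^\e*\psi|_{L^{q'}}\;.
\]
We take $\psi=\phi^\lambda_x$ for $\phi\in\CB^r$. Young's inequality gives $|\chi^\e*\phi^\lambda_x|_{L^{q'}}\le|\phi^\lambda_x|_{L^{q'}}\lesssim\lambda^{-n/q}$, since $\lambda^{-n}\lambda^{n/q'}=\lambda^{-n/q}$. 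For differences,
\[
|(\chi^\e-\chi^{\bar\e})*\phi^\lambda_x|_{L^{q'}}\lesssim\lambda^{-n/q}\min\{1,|\e-\bar\e|/\lambda\}\le\lambda^{-n/q-\nu}|\e-\bar\e|^\nu\;,
\]
with the case $\bar\e=0$ included ($\chi^0=\delta$). To see the middle bound, write $\chi^\e-\chi^{\bar\e}=\int_{\bar\e}^{\e}\partial_s\chi^s\,\mrd s$ and note that $\partial_s\chi^s$ is $s^{-1}$ times a divergence of a mean-zero-type kernel. Moving the derivative onto $\phi^\lambda$ gives $|\partial_s\chi^s*\phi^\lambda_x|_{L^{q'}}\lesssim\lambda^{-1}\lambda^{-n/q}$, so the integral is at most $|\e-\bar\e|\lambda^{-1-n/q}$. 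Interpolating between the two bounds gives the stated estimate.

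To convert these pointwise bounds into suprema over $x,\lambda,\phi$, I would use the standard wavelet or Littlewood--Paley characterisation: $|Y|_{\CC^\eta}\lesssim\sup_j 2^{j\eta}|\Delta_jY|_\infty$, which is bounded by the $B^{\eta'}_{p,p}$ norm for $\eta<\eta'-n/p$. Applying the moment bound with $\psi$ the Littlewood--Paley kernel at scale $2^{-j}$ centred at $x$, integrating in $x$ and summing in $j$, gives
\[
\E|X^\e-X^{\bar\e}|_{\CC^\eta}^p\lesssim|\e-\bar\e|^{\nu p}
\]
for all $\eta<-n/q-\nu$, after taking $p$ large so the loss $n/p$ is absorbed.

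The last step is to take the supremum over $\e,\bar\e$. Apply Kolmogorov's continuity criterion in the parameter $\e\in[0,1]$, which is one-dimensional, to the $\CC^\eta$-valued process $\e\mapsto X^\e$. To do this, pick $\nu'\in(\nu,1)$ with $\eta<-n/q-\nu'$; this is possible since the inequality is strict. With $p$ large, $\nu'p>1$, so Kolmogorov yields a Hölder modulus of order $\nu$ with all moments. This gives the second bound, and the first follows from it together with the bound on $|X^1|_{\CC^\eta}$. The main obstacle I expect is the careful $L^{q'}$ estimate for $(\chi^\e-\chi^{\bar\e})*\phi^\lambda_x$ uniformly on the torus, including the regime $\e\gg\lambda$, and making the embedding step rigorous with torus-adapted test functions. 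Both should be routine.
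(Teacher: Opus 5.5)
Your proposal is correct and follows essentially the paper's route. You apply the spectral gap to the linear functional $\scal{X^\e-X^{\bar\e},\phi^\lambda_z}$, bound the $L^{q'}$-norm of its deterministic functional derivative by $\lambda^{-n/q-\nu'}|\e-\bar\e|^{\nu'}$, and then run Kolmogorov in space and in the one-dimensional parameter $\e$. The differences are only in execution: the paper bounds the dual quantity $\sup_{|h|_{L^q}=1}|\scal{h^\e-h^{\bar\e},\phi^\lambda_z}|$ through Lemma~\ref{lem:convolution-estimate} and the embedding $L^q\hookrightarrow\CC^{-n/q}$ instead of computing $|(\chi^\e-\chi^{\bar\e})*\phi^\lambda_z|_{L^{q'}}$ directly, and it cites \cite{ChandraWeber17} for the spatial Kolmogorov step that you redo with Littlewood--Paley blocks. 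One small correction: since $\chi$ need not be nonnegative, your Young bound should read $|\chi^\e*\phi^\lambda_x|_{L^{q'}}\le|\chi|_{L^1}|\phi^\lambda_x|_{L^{q'}}$, which only changes a constant.
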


	\begin{proof}
		Denote $m=\E X$. Because $\chi^\e * m =m$ since $m$ is constant, it suffices to prove the bounds for $X-\E X$, which satisfies the same spectral gap inequality.
		We prove only the second bound; for the first bound, we can show $\E|X|_{\CC^\eta}^p<\infty$ in a similar and simpler manner, and then the first bound follows from the second one.
		
		Fix $\nu\in[0,1)$ and $\eta<-n/q-\nu$.
		Choose $\kappa\in[0,1]$ such that
		$\nu<\kappa <-n/q-\eta$
		and set $\gamma=-n/q-\kappa>\eta$.
		By a standard Kolmogorov argument
		(e.g. \cite[Theorem~2.7]{ChandraWeber17} for estimates in $\CC^\eta$ followed by 
		\cite[Theorem~A.10]{FV10} applied to the $1$-dimensional $\e$-parameter), it suffices to prove that, for all $p\geq 1$, uniformly in $\e,\bar\e\in[0,1]$, $\phi\in\CB^r$, $z\in\T^n$, $\lambda\in(0,1]$,
		\begin{equation}\label{eq:X_diff_moments}
			\lambda^{-\gamma}\, \E^{\frac1p} \big|\scal{X^\e-X^{\bar\e}, \phi_z^\lambda}\big|^p 
			\lesssim
			|\e-\bar\e|^{\kappa}\;.
		\end{equation}
		Here $r=-\floor{\eta}+1$, $\CB^r$, and $\phi_z^\lambda$ are as in Section~\ref{sec:notation}.
		
		By \lem{lem:SG-extension}, $X$ satisfies \eqref{eq:SG_p-extension} for $L^{q'}$-norm for any $p\geq 2$. Since $X$ is centred, duality between $L^{q'}$ and $L^q$, implies for all $p\geq 2$
		\begin{equs}
			\lambda^{-\gamma}\, \E^{\frac1p} \big|\scal{X^\e-X^{\bar\e}, \phi_z^\lambda}\big|^p 
			&\lesssim
			\lambda^{-\gamma}\!
			\sup_{|h|_{L^{{q}}}= 1}\!\! \big|\scal{h^\e-h^{\bar\e}, \phi_z^\lambda}\big| 
			\le\! \sup_{|h|_{L^{{q}}}= 1}\!\!
			|h^\e-h^{\bar\e}|_{\CC^{\gamma}} 
			\\&  
			\lesssim \sup_{|h|_{L^{{q}}}= 1} \!
			|h|_{\CC^{\gamma+\kappa}}
			|\e-\bar\e|^{\kappa}\;,
		\end{equs}
		where $h^\epsilon \eqdef \chi^\epsilon * h$ for $\epsilon \in \{\e, \bar\e\}$, and the final bound follows from a standard convolution estimate (see \lem{lem:convolution-estimate}).
		
		Since $\gamma+\kappa=-n/q<0$, the embedding
		$L^q=L^{-n/(\gamma+\kappa)}\hookrightarrow \CC^{\gamma+\kappa}$
		from Lemma~\ref{lem:L-HB-embedding} yields
		\begin{equ}
			\sup_{|h|_{L^{{q}}}= 1} \!
			|h|_{\CC^{\gamma+\kappa}}
			|\e-\bar\e|^{\kappa}
			\lesssim
			|\e-\bar\e|^{\kappa}\;,
		\end{equ}
		from which \eqref{eq:X_diff_moments} now follows.
	\end{proof}
	
	\begin{lemma}\label{lem:oddness}
		Consider $\tau \in \CT$. The following statements hold.
		\begin{enumerate}[label=(\roman*)]
			\item\label{item:odd}
			If $k$ is odd, then $\der{\tau} + \noise{\tau}$ is odd, where $\der{\tau}$ denotes the number of edges of $\tau$ labelled with $I'$.
			\item\label{item:even}
			If $k$ is even, then $\noise{\tau}$ is odd.
		\end{enumerate}
	\end{lemma}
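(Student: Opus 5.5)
We will argue by structural induction on $\tau\in\CT$, following the inductive definition in Definition~\ref{def:CT_trees}. The two quantities $\noise{\cdot}$ and $\der{\cdot}$ behave additively under the tree operations. Merging roots in a product $\tau\tau'$ of non-trivial planted trees preserves leaves, since the roots of $I(\cdot)$ and $I'(\cdot)$ have degree one before merging and are not leaves. The operations $I,I'$ do not change the leaf count for trees other than $\<Xi>$. For $\<Xi>$ itself, the single vertex is a leaf, and after applying $I$ or $I'$ the old root becomes a degree-one non-root vertex, still a leaf, while the new root is not a leaf once merged into a product with at least two factors. Hence for $\tau=\prod_{i=1}^{2k+1}I(\tau_i)$ we get $\noise{\tau}=\sum_i\noise{\tau_i}$ and $\der{\tau}=\sum_i\der{\tau_i}$. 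For $\tau=\big(\prod_{i=1}^kI(\tau_i)\big)I'(\tau_{k+1})$ we get $\noise{\tau}=\sum_{i=1}^{k+1}\noise{\tau_i}$ and $\der{\tau}=1+\sum_{i=1}^{k+1}\der{\tau_i}$. The base case is $\noise{\<Xi>}=1$, $\der{\<Xi>}=0$. One small point needs care: when $k=1$ the second product has $k+1=2\ge2$ factors, so the root is never a leaf in either case.

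For \ref{item:odd}, set $s(\tau)=\der{\tau}+\noise{\tau}$ and assume inductively that each $s(\tau_i)$ is odd. In the $P$-case, $s(\tau)=\sum_{i=1}^{2k+1}s(\tau_i)$ is a sum of an odd number of odd integers, hence odd. In the $Q$-case, $s(\tau)=1+\sum_{i=1}^{k+1}s(\tau_i)\equiv 1+(k+1)\pmod 2$. Since $k$ is odd, $k+1$ is even, so this is odd. The base case gives $s(\<Xi>)=1$.

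For \ref{item:even}, assume inductively that each $\noise{\tau_i}$ is odd. In the $P$-case, $\noise{\tau}$ is a sum of $2k+1$ odd integers, hence odd. In the $Q$-case, it is a sum of $k+1$ odd integers, which is odd because $k$ is even. The base case gives $\noise{\<Xi>}=1$.

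The only step requiring any care is justifying the additivity of the leaf count. This reduces to checking that the new root formed by the products in Definition~\ref{def:CT_trees} has degree at least $2$, and that the old root of a branch equal to $\<Xi>$ becomes a degree-one leaf. Both follow directly from Definition~\ref{def:trees}.
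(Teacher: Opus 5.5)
Your proof is correct and follows the paper's argument. The paper also inducts (on $\noise{\tau}$, which is equivalent to your structural induction) using exactly the recursions $\der{\tau}+\noise{\tau}=1+\sum_{i=1}^{k+1}(\der{\tau_i}+\noise{\tau_i})$ in the $Q$-case, the analogous sum without the $1$ in the $P$-case, and $\noise{\tau}=\sum_i\noise{\tau_i}$ for part (ii), and then reads off the parity from $k$; your explicit check that the leaf count is additive under the tree operations is a detail the paper leaves implicit.
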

	\begin{proof}
		\noindent\emph{\ref{item:odd} $k$ odd.}
		We argue by induction on $\noise{\tau}$. The base case $\noise{\tau}=1$, corresponding to $\tau=\<Xi>$, is immediate.
		
		Assume the claim holds for all $\sigma \in \CT^m_{\<Xi>}$ with $m \ge 1$, and let $\tau \in T_{m+1}$. Then
		\begin{equs}[eq:inductive-parity]
			\der{\tau}+\noise{\tau}
			=
			\begin{cases}
				\displaystyle
				1+\sum_{i=1}^{k+1}
				\bigl(\der{\tau_i}+\noise{\tau_i}\bigr)\,,
				&
				\displaystyle
				\tau=I'(\tau_{k+1})\prod_{i=1}^{k}\!I(\tau_i)\;, \;\; \{\tau_i\}_{i=1}^{k+1}\subset\CT^{m}_{\<Xi>} \;,
				\\
				\displaystyle
				\sum_{i=1}^{2k+1}
				\bigl(\der{\tau_i}+\noise{\tau_i}\bigr)\,,
				&
				\displaystyle
				\tau= \prod_{i=1}^{2k+1}\!I(\tau_i)\;,\;\; \{\tau_i\}_{i=1}^{2k+1}\subset\CT^{m}_{\<Xi>} \;.
			\end{cases}
		\end{equs}
		Therefore, the oddness of $\der{\tau}+\noise{\tau}$ in both cases follows immediately from the inductive hypothesis and the fact that $k$ is odd.
		
		\smallskip
		
		\noindent\emph{\ref{item:even} $k$ even.}
		This follows by a similar induction on $\noise{\tau}$, using
		\begin{equ}
			\noise{\tau}=
			\begin{cases}
				\displaystyle
				\sum_{i=1}^{k+1} \noise{\tau_i}\,, 
				& \; 
				\displaystyle
				\tau=I'(\tau_{k+1})\prod_{i=1}^{k}\!I(\tau_i)\;, \;\; \{\tau_i\}_{i=1}^{k+1}\subset\CT^{m}_{\<Xi>}\,,
				\\[1em]
				\displaystyle
				\sum_{i=1}^{2k+1} \noise{\tau_i}\,, 
				& \; 
				\displaystyle
				\tau= \prod_{i=1}^{2k+1}\!I(\tau_i)\;, \;\; \{\tau_i\}_{i=1}^{2k+1}\subset\CT^{m}_{\<Xi>} 
			\end{cases}
		\end{equ}
		in place of \eqref{eq:inductive-parity}, together with the fact that $k$ is even.
	\end{proof}
	For later reference, we recall (from \defi{def:trees}) that for a rooted tree $\tau$, we write $V_{\tau}$ for its set of vertices. 
	We denote by $L_{\tau}$ the set of leaves of $\tau$ and let $I_{\tau} \eqdef V_{\tau} \setminus L_{\tau}$, the elements of which are referred to as inner vertices.  
	Furthermore, we define 
	\[
	V^*_{\tau} \eqdef V_{\tau} \setminus \{\rho\}\;, \qquad
	L^*_{\tau} \eqdef L_{\tau} \setminus \{\rho\}\;, \qquad
	I^*_{\tau} \eqdef I_{\tau}  \setminus \{\rho\}\;,
	\]  
	where $\rho$ denotes the root of $\tau$.
	We denote henceforth the periodic heat kernel by
	\[
	G_t(x) = \sum_{z\in \Z^n} \frac{1}{(4\pi t)^{n/2}} \exp\Big(-\frac{|x+z|^2}{4t}\Big)
	\]
	for $t>0$ and $x\in\T^n$, and use the convention $G_t(x) = 0$ for $t\leq 0$.
	
	\begin{lemma}\label{lem:X^tau,e_mean-zero}
		Let $X$ be an $E$-valued random field satisfying
		\assu{ass:field}. Denote by $X^\e$ the mollification of $X$ at scale $\e\in(0,1)$,
		as defined in \theo{thm:main}, and let $\tau\in\CT\setminus\{\<Xi>\}$.
		Then $X^{\tau,\e}_t (x)$ is $\P$-integrable and $\E\big[X^{\tau,\e}_t (x)\big]=0$ for any $x \in \T^n$ and $t\in (0,1)$,
		where $X^{\tau,\e}\eqdef (X^\e)^\tau$.
		
		More generally,
		the result still holds if, at each leaf $v\in L_\tau$, the field $X^\e$ is replaced by a linear combination $f_v = \sum_{i=1}^{m_v} \lambda_i X^{\e_v^i}$, where $\lambda_i\in\R$, $m_v\in \N$, and $\{\e_v^{i}\}_{i\in[m_v],\, v\in L_\tau}$ is a collection of positive numbers.
	\end{lemma}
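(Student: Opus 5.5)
The argument has two parts: integrability, which is routine, and vanishing of the mean, which comes from a symmetry of $X$ together with the parity count of Lemma~\ref{lem:oddness}.

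\textbf{Integrability.} Fix $t$ and $x$. Write $X^{\tau,\e}_t(x)$ as a finite iterated integral over times $0<s_v<\dots<t$ attached to the inner vertices. The integrand is multilinear in the leaf fields $\CP_{s}X^\e$ and their derivatives.

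Each leaf field is smooth: $X^\e=\chi^\e*X$ gives $D^j\CP_sX^\e(y)=\scal{X,\psi}$ with $\psi$ a smooth test function depending on $(s,y,\e)$. By Lemma~\ref{lem:SG-extension}\ref{item:linear-moments}, these have moments of all orders. Hence, by Lemma~\ref{lem:X^eps_Ceta_moment-bounds},
\[
\sup_{s,y}|D^j\CP_sX^\e(y)|\;\lesssim_\e\;|X^\e|_{\CC^{1}}
\]
also has all moments. Here the mollification makes the bound uniform down to $s=0$. The time integrals are over bounded domains, so H\"older's inequality gives $X^{\tau,\e}_t(x)\in L^1(\P)$. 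The same argument applies verbatim to the linear combinations $f_v$, since each $f_v$ is a finite sum of mollifications.

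\textbf{Mean zero.} Since $\CR^\bipar X\eqdist -X$, we have $\E X=-\CR^\bipar\E X$. The mean is constant by Remark~\ref{rem:centered}, so $\E X=0$. I would not need this directly, however; the key is the transformation law of $X^\tau$ under $X\mapsto -\CR^\bipar X$.

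The mollifier $\chi$ is even, so $\chi^\e*(\CR X)=\CR(\chi^\e*X)$. The heat kernel $G_t$ is also even, so $\CP_t$ commutes with $\CR$, while $D\CR=-\CR D$. I would then show by induction on the tree structure, using \eqref{eq:X_tau_def}, that replacing every leaf field $f_v$ by $-\CR^\bipar f_v$ gives
\[
(\text{new }X^\tau)_t=(-1)^{\noise{\tau}+\bipar\der{\tau}}\,\CR^\bipar X^\tau_t .
\]
The multilinearity of $P$ and $Q$ produces the factor $(-1)^{\noise{\tau}}$, and each $I'$ edge contributes a $D$, giving an extra $(-1)$ when $\bipar=1$.

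The law of the joint family $(X^{\e^i_v})$ is invariant under this transformation, since all members are mollifications of the same $X$ and $\CR^\bipar X\eqdist -X$. Hence
\[
\E X^\tau_t(x)=(-1)^{\noise{\tau}+\bipar\der{\tau}}\,\E X^\tau_t\bigl((-1)^\bipar x\bigr).
\]
Stationarity, $X\eqdist\tau_aX$, commutes with all operations, so $\E X^\tau_t(\cdot)$ is constant in $x$. It therefore suffices to check that $\noise{\tau}+\bipar\der{\tau}$ is odd, and this is exactly where the parity hypotheses enter.

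If $Q\not\equiv0$ then $\bipar\equiv k \pmod 2$. For $k$ odd, $\bipar=1$ and Lemma~\ref{lem:oddness}\ref{item:odd} applies. For $k$ even, $\bipar=0$ and Lemma~\ref{lem:oddness}\ref{item:even} applies.

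If $Q\equiv0$, then $X^\tau=0$ unless $\der{\tau}=0$, since any tree containing an $I'$ edge involves $Q$. In the remaining trees, built only from $P$, each of the $2k+1$ products preserves oddness of the leaf count, so $\noise{\tau}$ is odd and either choice of $\bipar$ works. If $F_\crit\equiv0$ then every $X^\tau$ with $\tau\neq\<Xi>$ vanishes identically and there is nothing to prove.

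\textbf{Main obstacle.} The step needing most care is the induction for the sign law: tracking that $D$ anticommutes with $\CR$ only through $I'$ edges, that $\CP\star$ commutes with $\CR$, and that the evaluation point becomes $-x$ and is then removed by stationarity.
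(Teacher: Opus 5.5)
Your proposal is correct and follows essentially the paper's argument: integrability comes from moments of the mollified field together with integrable heat-kernel singularities, stationarity makes the mean independent of $x$, the symmetry $\CR^{\bipar}X\eqdist -X$ produces the sign $(-1)^{\noise{\tau}+\bipar\der{\tau}}$, and Lemma~\ref{lem:oddness} shows this exponent is odd. The differences are only in packaging. You derive the sign law by induction on the recursive definition \eqref{eq:X_tau_def} using $D\CR=-\CR D$, whereas the paper expands $X^{\tau,\e}_t(x)$ into the iterated integrals \eqref{eq:Y^tau} and changes variables $x_v\mapsto(-1)^{\bipar}x_v$; and your integrability step through $|X^\e|_{\CC^1}$ uses the harmless fixed-$\e$ smoothing bound $|X^\e|_{\CC^m}\lesssim_{\e}|X|_{\CC^\eta}$, whereas the paper needs only pointwise moments of $X^\e(x)$ and $L^1$ bounds on the heat kernel.
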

	
	\begin{proof}
		We only detail the proof of the first statement, as the more general case follows in exactly the same way.
		
		First, note that $X^{\tau,\e}_t(x)$ can be written as a linear combination of finitely many terms of the form
		\begin{equation}\label{eq:Y^tau}
			X^{\tau,h,j}_t(x) = \int_{(0,1)^{I_\tau^*}}\mrd t \int_{(\T^n)^{V^{*}_{\tau}}} \mrd x
			\prod_{v\in L_{\tau}} X^{h_v}(x_{v})
			\prod_{u\in V^{*}_{\tau}} G^{(j_{\hat{u},u})}_{t_{\hat{u}}-t_{u}}(x_{\hat{u}}-x_{u})\;,
		\end{equation}
		where $h \in (E^{*})^{L_\tau}$ and $X^{h_v}(\cdot) \eqdef h_v(X^\e(\cdot))$ and we set $t_u = 0$ for every leaf $u \in L_\tau^*$. We denote $(t_\rho,x_\rho) = (t,x)$ for $\rho$ the root of $\tau$. Moreover, $j\in (\N^n_0)^{E_\tau}$ satisfies $|j_{\hat u,u}| = 1$ if $(\hat{u},u)$ carries the label $I'$, and $j_{\hat u,u} = 0$ otherwise.
		
		It therefore suffices to show that each $X^{\tau,h,j}_t(x)$ is $\P$-integrable and has zero expectation.
		First, for fixed $\e>0$, by Lemma~\ref{lem:SG-extension} and stationarity, $X^\e(x) = X(\chi^\e(x-\cdot))$ has moments of all orders uniformly in $x$,
		thus so does $\prod_{v\in L_{\tau}} X^{h_v}(x_{v})$ uniformly in $x_v$.
		Furthermore, $|G_t|_{L^1(\T^n)}=1$ and $|G^{(j)}_t|_{L^1(\T^n)}\lesssim t^{-1/2}$ for $t>0$ and $|j|=1$, so the time singularities in \eqref{eq:Y^tau} are integrable,
		and we conclude by
		Tonelli's theorem that $X^{\tau,h,j}_t(x)$ is $\P$-integrable.
		
		Next, by stationarity, it is enough to prove that $\E[X^{\tau,h,j}_t(0)] = 0$. Indeed, performing the change of variables $x_v \mapsto x_v + x$ for all $v \in V_\tau^*$ and using translation invariance of the heat kernel, all kernels remain unchanged except those of the form $G^{(j)}_{t_\rho-\cdot}(x_\rho-\cdot)$, which become $G^{(j)}_{t_\rho-\cdot}(-\cdot)$. 
		Hence, by stationarity of $X$, $\E[X^{\tau,h,j}_t(x)] = \E[X^{\tau,h,j}_t(0)]$.
		
		If $X^{\tau,h,j} \equiv 0$ then we are done.
		Assume henceforth that $X^{\tau,h,j} \not\equiv 0$, which in particular implies
		$F_{\crit}\not\equiv0$.
		Let $\bipar$ be as in Assumption~\ref{ass:field}.
		We claim that
		\begin{equation}\label{eq:parity}
			\noise{\tau} + \bipar\der{\tau} \; \text{ is odd.}
		\end{equation}
		Indeed, 
		if $Q_k \not\equiv 0$ and $k$ is odd, then $\bipar=1$ and Lemma~\ref{lem:oddness}\ref{item:odd} implies that $\noise{\tau} + \bipar\der{\tau}$ is odd.
		If $Q_k \not\equiv 0$ and $k$ is even, then $\bipar=0$ and Lemma~\ref{lem:oddness}\ref{item:even} implies that $\noise{\tau}$ is odd.
		If $Q_k \equiv 0$, then $X^{\tau,h,j} \not\equiv 0$ implies $\der{\tau}=0$ and $\tau$ is $(2k+1)$-ary,
		therefore $\noise{\tau} + \bipar\der{\tau} = \noise{\tau}$ is odd.
		This proves \eqref{eq:parity}.
		
		Since $\chi$ is even, mollification commutes with reflection, and hence, by \eqref{eq:compatible-symmetry},
		\begin{equation}\label{eq:reflection-commutes}
			\CR^{\bipar} X^\e
			=
			(\CR^{\bipar} X)^\e
			\eqdist
			-X^\e\;.
		\end{equation}
		Consider \eqref{eq:Y^tau} with $x=0$ and perform the change of variables
		$x_v\mapsto (-1)^{\bipar} x_v$ for all
		$v\in V_\tau^*$.
		Since $G_t$ is even in space and every $G_t^{(j)}$ with $|j|=1$ is
		odd, the product of heat kernels is multiplied by $(-1)^{\bipar\der{\tau}}$.
		On the other hand, by \eqref{eq:reflection-commutes},
		\[
		\prod_{v\in L_\tau}
		X^{h_v}\big((-1)^{\bipar} x_v\big)
		\eqdist
		(-1)^{\noise{\tau}}
		\prod_{v\in L_\tau}X^{h_v}(x_v)\;.
		\]
		We thus obtain
		\[
		\E X^{\tau,h,j}_t(0)
		=
		(-1)^{\noise{\tau}+\bipar\der{\tau}}
		\E X^{\tau,h,j}_t(0)\;.
		\]
		By \eqref{eq:parity}, $\noise{\tau}+\bipar\der{\tau}$
		is odd, hence
		$\E X^{\tau,h,j}_t(0)=0$.
	\end{proof}

	\begin{lemma}
		\label{lem:heat-flow-estimates}
		Let $\CP_t = \mre^{t\Delta}$ denote the heat semigroup on $\T^n$, and let $t \in (0,1]$.
		\begin{enumerate}[label=(\roman*)]
			\item
			\label{item:Holder}
			Let $a\in\N_0^n$ and $b,\bar b\in\R$ with
			$b\le \bar b$. Then, for every $f\in\CC^b(\T^n)$,
			\begin{equ}
				|D^{a} \CP_t f|_{\CC^{\bar b}}
				\;\lesssim\;
				t^{\frac{b- \bar b-|a|}{2}}
				|f|_{\CC^{b}}\;.
			\end{equ}
			
			\item
			\label{item:Leb}
			Let $a\in\N^n_{0}$ and $1 \le r \le \bar r \le \infty$. Then, for all $f\in L^r(\T^n)$,
			\begin{equ}
				|D^a \CP_t f|_{L^{\bar r}}
				\;\lesssim\;
				t^{-\frac{|a|}{2}-\frac n2(\frac1r-\frac{1}{\bar r})}
				|f|_{L^r}\;.
			\end{equ}
			
			\item
			\label{item:Hol_diff}
			Let $a\in\N^n_{0}$, $\,b, \bar b\in\R$ with $\,b\le \bar b$, and $f\in \CC^b(\T^n)$. Then, for all $s\in(0,t]$ and $\kappa\in[0,1]$,
			\begin{equ}
				|D^a(\CP_t-\CP_s)f|_{\CC^{\bar b}}
				\;\lesssim\;
				|t-s|^\kappa\,
				s^{\frac{b-\bar b-|a|}{2}-\kappa}
				|f|_{\CC^b}\;.
			\end{equ}

			\item
			\label{item:Leb_diff}
			Let $a\in\N^n_{0}$, $\,1\le r\le \bar r\le\infty$,  and $f\in L^r(\T^n)$. Then, for all $s\in(0,t]$ and $\kappa\in[0,1]$,
			\begin{equ}
				|D^a (\CP_t-\CP_s)f|_{L^{\bar r}}
				\;\lesssim\;
				|t-s|^\kappa\,
				s^{-\frac{|a|}{2} -\frac n2(\frac1r-\frac{1}{\bar r})-\kappa}
				|f|_{L^r}\;.
			\end{equ}
		\end{enumerate}
		
		The implicit constants depend only on $n,b,\bar b,r,\bar r,a$.
	\end{lemma}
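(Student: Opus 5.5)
The plan is to reduce all four items to explicit bounds on the periodic heat kernel $G_t$ and its derivatives, which are then combined with Young's inequality (for the Lebesgue estimates) and with the scaling characterisation of $\CC^b$ through heat-flow or Littlewood--Paley blocks (for the H\"older estimates).

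\textbf{Item (i).} The heat semigroup commutes with $D^a$. I therefore split $D^a\CP_t = D^a\CP_{t/2}\CP_{t/2}$.
\begin{itemize}
\item For $b\le\bar b$, the estimate $|\CP_{t/2}f|_{\CC^{\bar b}}\lesssim t^{(b-\bar b)/2}|f|_{\CC^b}$ is exactly \eqref{eq:heat_flow_estimates} with $\gamma=\bar b-b$.
\item It then suffices to show $|D^a\CP_{t/2}g|_{\CC^{\bar b}}\lesssim t^{-|a|/2}|g|_{\CC^{\bar b}}$.
\item The last bound follows because $D^a$ maps $\CC^{\bar b+|a|}$ to $\CC^{\bar b}$ boundedly, and \eqref{eq:heat_flow_estimates} with $\gamma=|a|$ gives $|\CP_{t/2}g|_{\CC^{\bar b+|a|}}\lesssim t^{-|a|/2}|g|_{\CC^{\bar b}}$.
\end{itemize}
There is one subtlety: the paper's convention for integer $\CC^j$ differs from $B^j_{\infty,\infty}$. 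At an integer endpoint I would absorb this by losing an arbitrarily small amount of regularity. Since the statement allows $b\le\bar b$ freely, I would instead route through the non-integer regularity $\bar b+|a|+\epsilon'$, using the extra room provided by the smoothing of $\CP_{t/2}$; this costs no power of $t$ if the time is also split again.

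\textbf{Item (ii).} Write $D^a\CP_tf=(D^aG_t)*f$ and apply Young's inequality with $1+\frac1{\bar r}=\frac1s+\frac1r$. This reduces the claim to the kernel bound
\[
|D^aG_t|_{L^s(\T^n)}\lesssim t^{-|a|/2-\frac n2(1-\frac1s)}, \qquad t\in(0,1].
\]
The kernel bound follows from the pointwise Gaussian estimate $|D^aG_t(x)|\lesssim t^{-(n+|a|)/2}e^{-c|x|^2/t}$ on $\T^n$. This estimate holds because, for $t\le1$, the periodisation sum is dominated by the $z=0$ term together with a summable tail.

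\textbf{Items (iii) and (iv).} For the time-difference estimates I interpolate between two endpoint cases.
\begin{itemize}
\item The case $\kappa=0$ is items (i)–(ii) applied at times $t$ and $s$, using $t^{\cdot}\le s^{\cdot}$ for the negative exponents involved.
\item For the case $\kappa=1$, write
\[
(\CP_t-\CP_s)f=\int_s^t\Delta\CP_uf\,\mrd u .
\]
Apply (i) or (ii) with $|a|+2$ derivatives, which gives an integrand of order $u^{e-1}$ times the relevant norm of $f$, where $e$ denotes the exponent from (i)/(ii) for $a$. Since $u\ge s$, the integral is bounded by $|t-s|\,s^{e-1}$.
\item For intermediate $\kappa\in[0,1]$, use $\min\{1,|t-s|/s\}\le(|t-s|/s)^\kappa$.
\end{itemize}

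\textbf{Main obstacle.} I expect the main difficulty to lie in the bookkeeping for (i) when $\bar b$ or $\bar b+|a|$ is an integer, because of the nonstandard convention for $\CC^j$. The remaining steps are routine kernel estimates.
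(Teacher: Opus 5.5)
Your proposal is correct and follows essentially the paper's argument. Item (i) comes from \eqref{eq:heat_flow_estimates} together with boundedness of $D^a\colon\CC^{\bar b+|a|}\to\CC^{\bar b}$. Item (ii) comes from Young's inequality and the $L^s$ bounds on $D^aG_t$ obtained from the Gaussian estimate for the periodised kernel. Items (iii)--(iv) come from interpolating the $\kappa=0$ bound with the $\kappa=1$ bound given by $\CP_t-\CP_s=\int_s^t\Delta\CP_u\,\mrd u$.

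Two small remarks. The split $\CP_t=\CP_{t/2}\CP_{t/2}$ in (i) is unnecessary. Your integer-endpoint worry is also moot, since the paper records that \eqref{eq:heat_flow_estimates} holds with its convention for $\CC^j$. That is fortunate, because your proposed detour through $\CC^{\bar b+|a|+\epsilon'}$ would cost a factor $t^{-\epsilon'/2}$ however the time is split.
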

	
	\begin{proof}
		\ref{item:Holder} follows immediately from
		\eqref{eq:heat_flow_estimates} and the estimate $|D^a g|_{\CC^{\bar b}} \lesssim |g|_{\CC^{\bar b+|a|}}$.
		
		To prove \ref{item:Leb},
		recall that $G_t(x)$ denotes the periodic heat kernel. For every
		$a\in\N_0^n$ and $q\in[1,\infty]$, we have uniformly in $t\in(0,1]$
		\begin{equation}\label{eq:heat-kernel-Lq}
			|D^aG_t|_{L^q}
			\lesssim
			t^{-\frac{|a|}{2}-\frac n2(1-\frac1q)}\;,
		\end{equation}
		which follows directly from the corresponding Gaussian
		derivative estimate on $\R^n$ and the representation of $G_t$ as
		the periodisation of the heat kernel on $\R^n$.
		Now choose $q\in[1,\infty]$ so that
		$1+\frac1{\bar r}=\frac1r+\frac1q$.
		Young's inequality and \eqref{eq:heat-kernel-Lq} then imply
		\[
		|D^a\CP_t f|_{L^{\bar r}}
		\leq
		|D^aG_t|_{L^q}|f|_{L^r}
		\lesssim
		t^{-\frac{|a|}{2}
			-\frac n2(\frac1r-\frac1{\bar r})}
		|f|_{L^r}\,.
		\]
		For \ref{item:Hol_diff}.
		we have $|D^a(\CP_t-\CP_s)f|_{\CC^{\bar b}}\lesssim s^{\frac{b-\bar b -|a|}{2}}|f|_{\CC^b}$ by \ref{item:Holder}.
		On the other hand, writing $\CP_t-\CP_s = \int_s^t\Delta\CP_r \mrd r$ and using $|D^a \Delta \CP_r f|_{\CC^{\bar b}}\lesssim r^{\frac{b-\bar b-|a|}{2}-1}|f|_{\CC^{b}}$ by \ref{item:Holder}, we obtain
		\[
		|D^a(\CP_t-\CP_s)f|_{\CC^{\bar b}} \lesssim \int_s^t r^{\frac{b-\bar b-|a|}{2}-1} |f|_{\CC^b} \mrd r \lesssim |t-s| s^{\frac{b-\bar b-|a|}{2}-1}|f|_{\CC^b}\;.
		\]
		Interpolating the two bounds proves \ref{item:Hol_diff}.
		\ref{item:Leb_diff} follows in exactly the same manner from \ref{item:Leb}.
	\end{proof}
	
	\begin{proof}[of \theo{thm:convergence-of-mollifications}]
		By Lemma~\ref{lem:X^eps_Ceta_moment-bounds} and Remark~\ref{rem:centered}, we have
		\begin{equ}[eq:X_eps_diff]
			\||X^\e-X^{\bar\e}|_{\CC^{\eta}}\|_{L^p} \lesssim |\e-\bar\e|^{\ubar{\kappa}}\;,
		\end{equ} 
		for any $\ubar{\kappa}\in[0,1)$, $\eta<\alpha-\ubar{\kappa}$, and all $p\ge 1$, and uniformly in $\e,\bar\e\in[0,1]$.
		Lemma~\ref{lem:X^eps_Ceta_moment-bounds} also implies that \eqref{eq:X_eps_diff} holds with $\ubar{\kappa}=0$ and $X^\e-X^{\bar\e}$ on the left replaced by either $X^\e$ or $X^{\bar\e}$.
		Below, when invoking \eqref{eq:X_eps_diff} with $\ubar{\kappa}=0$, the estimate is understood in this no-difference sense.
		
		By a standard Kolmogorov-type argument (see e.g. the proof of \cite[Corollary~3.3]{CM25}), it suffices to show that there exist $\bar\kappa>0$ such that for any $\kappa\in[0,\bar\kappa]$,
		$\tau\in\CT^N$, $\beta \in [\alpha,0]$, $\delta>-|\tau|/2+\beta/2$, 
		and all $p\geq1$, and uniformly in $t,s\in(0,1)$, $\e,\bar\e\in(0,1)$, $\lambda\in(0,1]$, $z\in\T^n$,
		and $\phi\in\CB^r$ with $r=-\floor{\beta}+1$ (see Section~\ref{sec:notation}),
		\begin{align}
			\label{eq:Z_eps_diff.}
			\| \scal{t^{\delta+{\kappa}/2} X^{\tau,\e}_t-t^{\delta+{\kappa}/2} X^{\tau,\bar\e}_t , \phi^\lambda_z} \|_{L^p}
			&\lesssim |\e-\bar\e|^{{\kappa}} \lambda^{\beta}\;,
			\\
			\label{eq:Z_time_diff.}
			\|  \scal{t^{\delta+\kappa/2} X_t^{\tau,\e} -s^{\delta+\kappa/2} X_s^{\tau,\e},\phi^\lambda_z} \|_{L^p}
			&\lesssim |t-s|^{\kappa/2} \lambda^{\beta}\;.
		\end{align}
		
		\medskip
		
		\textit{Proof of \eqref{eq:Z_eps_diff.}.} We employ a telescoping argument combined with the triangle inequality. To prove \eqref{eq:Z_eps_diff.}, it is therefore sufficient to establish that
		\begin{equ}[eq:Y_bound]
			\mathbb{E}\big|\langle Y^\tau_{t}, \phi_z^\lambda\rangle\big|^p \;\lesssim\;
			t^{-p(\delta+\kappa/2)} \lambda^{p\beta}\,|\varepsilon-\bar\varepsilon|^{p\kappa}\,.
		\end{equ}
		Here, for any $(t,x)\in (0,1)\times\mathbb{T}^n$, the field $Y^\tau_t(x)$ is defined analogously to $X^\tau_t(x)$, with the only difference that in the tree $\tau$ one leaf corresponds to $X^\varepsilon - X^{\bar\varepsilon}$, while all remaining leaves correspond to either $X^\varepsilon$ or $X^{\bar\varepsilon}$.
		
		To apply the spectral gap inequality, we approximate $X$ by a smooth finite-dimensional Fourier truncation
		$X^{(R)} = S_R X$
		(e.g. a Littlewood--Paley projector) for $R\geq 1$.
		Here $S_R \colon \CD'\to\CC^\infty$
		is a linear operator taking values in the linear span of $(e_j)_{|j|\le R}$ with $e_j(x) = \mre^{2\pi\mri j\cdot x}$ for $j\in\mathbb{Z}^n$ and $x\in\mathbb{T}^n$,
		and such that $S_R f\to f$ in $\CD'(\T^n)$ as $R\to\infty$ for every $f\in\CD'$, and $S_R$ is self-adjoint, commutes with translations, reflections, and is uniformly bounded in $R\geq 1$ as an operator on $\CC^\gamma$ for any $\gamma\in\R$ and on $L^r(\T^n)$ for any $r\in[1,\infty]$.
		It is easy to show that $X^{(R)}$ satisfies Assumption~\ref{ass:field} with a constant in \eqref{eq:SG-inequality} that is uniform in $R$.
		
		We then define the corresponding approximations $Y^{\tau,(R)}_t$ by replacing each occurrence of $X$ in the recursive construction of $Y^\tau$ with $X^{(R)}$, and set
		\[
		F_R \eqdef \langle Y^{\tau,(R)}_t, \phi^\lambda_z\rangle\;.
		\]
		By construction, $F_R$ is a cylindrical functional of $X^{(R)}$, so that \eqref{eq:SG_p-extension} applies by Lemma~\ref{lem:SG-extension}.
		Below we will obtain moment bounds for $F_R$ that are uniform in $R$ since \eqref{eq:SG-inequality} holds for $X^{(R)}$ uniformly in $R$.
		Since $F_R\to \scal{Y^\tau_t,\phi^\lambda_z}$ almost surely as $R\to\infty$, Fatou's lemma implies the corresponding moment bounds for $\scal{Y^\tau_t,\phi^\lambda_z}$.
		For notational simplicity, and with a slight abuse of notation, we suppress the dependence on $R$ of all objects in what follows.
		
		\medskip
		
		We have, by \lem{lem:X^tau,e_mean-zero},
		that $\E \scal{Y^{\tau}_{t}, \phi_z^\lambda}=0$.
		Hence, using the spectral gap inequality from \assu{ass:field},
		\lem{lem:SG-extension}\ref{item:polynomial-cylindrical}, and duality between $L^{q_*}$ and $L^{q^*}$, we obtain for all $p\geq 2$
		\begin{equ}[eq:SG-for-Y^tau]
			\E \big|\scal{Y^{\tau}_{t}, \phi_z^\lambda}\big|^p\, \lesssim \,
			\E\! \sup_{|h|_{L^{q_*}}= 1}\! \max_{\tau^h} \big|\scal{Y^{\tau^h}_{t}, \phi_z^\lambda}\big|^p\;,
		\end{equ}
		where the $\max$ is over all trees $\tau^h$ isomorphic to $\tau$ with one of the leaves representing the element $h$ (more precisely $h^{(R)}$, which is also uniformly bounded in $L^{q_*}$ by our choice of Fourier truncation) rather than the corresponding field, and $Y^{\tau^h}$ is defined correspondingly.
		(Since the dependence on the choice of the leaf corresponding to $h$ is not relevant for the analysis, with a slight abuse of notation, we suppress it in the notation.)
		
		\smallskip
		Using the Besov embedding $L^{-n/\beta} \hookrightarrow \CC^{\beta}$ (see Lemma~\ref{lem:L-HB-embedding})
		and that
		\begin{equ}[eq:embed-ineq]
			\lambda^{-\beta} \big|\scal{Y^{\tau^h}_{t}, \phi_z^\lambda}\big| \le |Y^{\tau^h}_{t}|_{\CC^{\beta}}
			\lesssim  |Y^{\tau^h}_{t}|_{L^{-n/\beta}}\;,
		\end{equ}
		we conclude \eqref{eq:Y_bound} once we establish
		\begin{equ}[eq:inductive-hypothesis]
			\Big\| \sup_{|h|_{L^{q_*}}= 1} |Y^{\tau^h}_{t}|_{L^{-n/\beta}} \Big\|_{L^p} \lesssim
			t^{-\delta-\kappa/2}|\e-\bar\e|^{\kappa}\;,
		\end{equ}
		where the parameters are as specified earlier.
		Another useful consequence of \eqref{eq:SG-for-Y^tau}--\eqref{eq:inductive-hypothesis},
		combined with a Kolmogorov argument, 
		is
		\begin{equ}[eq:Kol-C_beta]
			\Big\| |Y^{\tau}_{t}|_{\CC^{\beta^-}} \Big\|_{L^p} \lesssim
			t^{-\delta-\kappa/2} \, |\e-\bar\e|^{\kappa}
		\end{equ}
		for every $\beta^-<\beta$. 
		Here and below, we adopt the following endpoint convention. When
		$\beta=0$, every occurrence of
		$- n/\beta$ and $L^{-n/\beta}$
		is interpreted, respectively, as
		$\infty$ and $L^{\infty}$.		
		At this endpoint, rather than using the embedding
		$L^{-n/\beta}\hookrightarrow\CC^\beta$, we use directly
		\[
		\big|\scal{f,\phi_z^\lambda}\big|
		\le
		|f|_{L^\infty}|\phi_z^\lambda|_{L^1}
		\lesssim
		|f|_{L^\infty}\,.
		\]
		The heat flow estimates used below remain valid for $q=\infty$.
		Accordingly, when $\beta=0$, the spatial Kolmogorov argument yields
		$\CC^{\beta^-}$-regularity for every $\beta^-<0$.
		
		At this point, we also note that the no-difference counterparts of
		\eqref{eq:SG-for-Y^tau}--\eqref{eq:Kol-C_beta} 
		hold with
		$\kappa=0$ on the right-hand side whenever the tree on the left-hand
		side contains no leaf corresponding to convolution with
		$\chi^\e-\chi^{\bar\e}$. 
		Since their proofs are identical, we omit the details and regard the
		no-difference counterparts as established. In the arguments below,
		whenever we apply any of
		\eqref{eq:SG-for-Y^tau}--\eqref{eq:Kol-C_beta}  
		with
		$\kappa=0$ to such a tree, the estimate is understood in this
		no-difference sense.
		
		\smallskip
		
		For the rest,
		we will assume without loss of generality that $E=\R$; for general inner product space $E$, as in the proof of \cite[Theorem~3.2]{CM25}, one can write $Y^\tau_t(x)$ as a linear combination of a finite number of suitable terms and control the corresponding scalar valued coefficients (see \eqref{eq:Y^tau}).
		
		We now prove by induction on $\mcb{i}(\tau)\eqdef |I_\tau|$, where we recall that $I_\tau$ denotes the set of all inner vertices of $\tau$, that \eqref{eq:inductive-hypothesis} holds for all $p\ge 1$,
		$\beta\in[\alpha,0]$, $\kappa\in[0,1+k\alpha)$, $\delta>-|\tau|/2+\beta/2$, and uniformly in $\e,\bar\e\in(0,1)$ and $t\in(0,1)$.
		
		\medskip
		
		\textbf{Base case.}
		The base case corresponds to elements $\tau$ of $\CT^N$ with $\mcb{i}(\tau)=1$, and the associated trees are
		$\tau= I'(\<Xi>) \prod_{i=1}^k I(\<Xi>), \, \prod_{i=1}^{2k+1} I(\<Xi>)$.
		Since the argument for both cases proceeds in exactly the same way, we only prove the claim for $\tau= I'(\<Xi>) \prod_{i=1}^k I(\<Xi>)$. We thus need to bound the moments of
		\begin{align}
			\label{eq:base-case-terms1}
			&\sup_{|h|_{L^{q_*}}=1}
			\Big|D\CP_t (Z_u^\e-Z_u^{\bar\e})
			\prod_{v\in L_\tau\setminus\{u\}}
			\CP_t Z_v^{\epsilon_v}\Big|_{L^{-n/\beta}}\;,
			\\
			\label{eq:base-case-terms2}
			&\sup_{|h|_{L^{q_*}}=1}
			\Big|\CP_t (Z_u^\e-Z_u^{\bar\e})\,
			D\CP_t Z_w^{\epsilon_w}
			\prod_{v\in L_\tau\setminus\{u,w\}}
			\CP_t Z_v^{\epsilon_v}\Big|_{L^{-n/\beta}}\;,
		\end{align}
		where $u,w\in L_\tau$ are distinct, and
		$Z_{\cdot}\in\{h,X\}$ with exactly one leaf carrying the label $h$.
		Moreover,
		we write
		$Z_{\cdot}^{\epsilon_{\cdot}}
		\eqdef
		\chi^{\epsilon_{\cdot}} * Z_{\cdot}$,
		where
		$ \epsilon_{\cdot}\in\{\e,\bar\e\}$.
		
		We derive the desired moment bounds of the first expression \eqref{eq:base-case-terms1} with $Z_{u}=X$ and $\epsilon_{v}=\e$ for all $v\in L_\tau\setminus \{u\}$; the remaining cases can be estimated analogously, yielding the same bound, and we highlight below the main changes.
		
		Denoting $q\eqdef -n/\beta \ge {q_*}$, we have, by H\"older's inequality,
		\begin{equation}\label{eq:base-case}
			\begin{aligned}
				\E^{\frac{1}{p}}\!\! & \sup_{|h|_{L^{q_*}}= 1} \Big|\CP_t h^\e D\CP_t (X^\e\!-X^{\bar\e}) \prod_{i=1}^{k-1} \CP_t X^\e \Big|^p_{L^{q}}
				\\&
				\lesssim   \E^{\frac{1}{p}} \Big|D\CP_t (X^\e\!-X^{\bar\e}) \prod_{i=1}^{k-1} \CP_t X^\e \Big|_{L^\infty}^p   \sup_{|h|_{L^{q_*}}= 1} |\CP_t h^\e|_{L^{q}} 
				\\
				&\lesssim   \Big(\prod_{i=1}^{k-1} \big\| |\CP_t X^\e|_{L^\infty}\big\|_{L^{kp}}\Big) \big\| |D\CP_t (X^\e\!-X^{\bar\e})|_{L^\infty}\big\|_{L^{kp}} \sup_{|h|_{L^{q_*}}= 1} t^{\frac{n}{2q}-\frac{n}{2{q_*}}}|h^\e|_{L^{q_*}} 
				\\
				&\lesssim 
				t^{\frac{k(\alpha-\varsigma)-1-\kappa}{2}} \big\| \big|X^\e\big|_{\CC^{\alpha-\varsigma}} \big\|_{L^{kp}}^{k-1} \, \big\| \big| X^\e\!-X^{\bar\e}\big|_{\CC^{\alpha-\kappa-\varsigma}} \big\|_{L^{kp}}\, t^{\frac{\alpha-\beta}{2}} \sup_{|h|_{L^{q_*}}= 1} \! |h^\e|_{L^{q_*}}
				\\
				&\lesssim  t^{\frac{k(\alpha-\varsigma)-1-\kappa}{2}}|\e-\bar\e|^{\kappa}\, t^{\frac{\alpha-\beta}{2}}
				=   t^{\frac{(k+1)\alpha-1-\beta-\kappa-k\varsigma}{2}}|\e-\bar\e|^{\kappa}\;, 
			\end{aligned}
		\end{equation}
		where $\varsigma>0$ can be chosen arbitrarily small.
		Above, we applied \eqref{eq:X_eps_diff},
		once with $\ubar{\kappa}=\kappa \in [0,1+k\alpha) \subset [0,1]$ and $\eta=\alpha-\kappa-\varsigma$, and $k-1$ times with $\ubar{\kappa}=0$ and $\eta=\alpha-\varsigma$. (We remark that Lemma~\ref{lem:X^eps_Ceta_moment-bounds} remains valid for the Fourier-truncated approximations, and so does \eqref{eq:X_eps_diff}.)
		To estimate the factor involving $h$, we used \lem{lem:heat-flow-estimates}\ref{item:Leb} and that $q\ge {q_*}$.
		Since $|I'(\<Xi>) \prod_{i=1}^k I(\<Xi>)|=(k+1)\alpha-1$, the final bound obtained in \eqref{eq:base-case} aligns with the desired estimate for the base case. 
		
		The case $Z_u = h$ in \eqref{eq:base-case-terms1} follows analogously, the main change being that we apply
		\begin{equation}\label{eq:h_diff_flow}
			|D^a \CP_t(h^\e - h^{\bar\e})|_{L^{q}}\lesssim 
			|\e - \bar\e|^\kappa t^{\frac{\alpha-\beta-|a|-\kappa}{2}}|h|_{L^{q_*}}\;.
		\end{equation}
		To see this latter bound, by interpolation, it suffices to consider $\kappa=0,1$.
		The case $\kappa=0$ is Lemma~\ref{lem:heat-flow-estimates}\ref{item:Leb},
		while for $\kappa=1$ we use $|f^\e-f^{\bar \e}|_{L^q}\lesssim |\e-\bar\e||D f|_{L^q}$ thus
		\[
		|D^a \CP_t(h^\e - h^{\bar\e})|_{L^{q}}
		\lesssim |\e-\bar\e||D D^a \CP_t h|_{L^{q}}
		\lesssim |\e-\bar\e|t^{\frac{\alpha-\beta-|a|-1}{2}}|h|_{L^{q_*}}
		\]
		where the final bound uses again Lemma~\ref{lem:heat-flow-estimates}\ref{item:Leb}.
		
		The moment bounds for \eqref{eq:base-case-terms2} follow in the same manner, with the minor change that we apply \eqref{eq:h_diff_flow} with $a=0$ in the case $Z_u=h$.
		
		\medskip
		
		\textbf{Induction step.}
		Suppose now that
		the claimed bound \eqref{eq:inductive-hypothesis}
		holds for all $\sigma\in\CT^N$ with $\mcb{i}(\sigma)\le m$, $m \in \N$,
		and consider any $\tau\in\CT^N$ with $\mcb{i}(\tau)= m+1$.
		Since the argument for both cases $\tau = I'(\tau_{k+1}) \prod_{i=1}^k I(\tau_i)$ and $\tau = \prod_{i=1}^{2k+1} I(\tau_i)$ is similar, we only consider trees of the former structure.
		
		We assume further that $\tau^h = I'(\tau_{k+1}^h)\prod_{i=1}^k I(\tau_i)$ and that all the leaves of $\tau_i\neq\tau_{k+1}$ represent either $X^\e$ or $X^{\bar\e}$, and the leaf corresponding to convolution with $\chi^\e-\chi^{\bar\e}$ belongs to $\tau_{k+1}$; the other cases where $h$ or the leaf with $\chi^\e-\chi^{\bar\e}$ are on different branches are handled similarly. (Although there is no $X^\e - X^{\bar\e}$ associated with the leaves of $\tau_i \neq \tau_{k+1}$, we still write $Y^{\tau_i}$ to denote the corresponding factor.) 
		Assume also that $\mcb{i}(\tau_i) \ge 1$ for all $i$; if $\mcb{i}(\tau_i) = 0$ for some $i$, i.e. $\tau_i = \<Xi>$, then
		we obtain the same conclusion without using the inductive hypothesis for $\tau_i$ and instead applying \eqref{eq:X_eps_diff} as in~\eqref{eq:base-case} without a time-integral for this factor.
		
		Letting $p\geq 2$ and applying H\"older's inequality,
		\begin{equs}[eq:proof-of-induction]
			\E^{\frac{1}{p}}\!\!\sup_{|h|_{L^{q_*}}= 1}\!\! |Y^{\tau^h}_{t}|_{L^{q}}^p
			&\lesssim
			\Big(\prod_{i=1}^k \E^{\frac{1}{\tilde{p}}} |\CP_t\star Y^{\tau_i}|_{L^\infty}^{\tilde{p}}\Big) \;
			\E^{\frac{1}{\tilde{p}}}\!\! \sup_{|h|_{L^{q_*}}= 1}\!\! |D\CP_t\star Y^{\tau_{k+1}^h}|_{L^{q}}^{\tilde{p}} \qquad
			\\ & \lesssim
			\Big(\prod_{i=1}^k \int_0^t (t-r)^{\frac{\alpha}{2}}\, \E^{\frac1{\tilde{p}}} |Y^{\tau_i}_{r}|_{\CC^\alpha}^{\tilde{p}} \,\mrd r\Big)
			\\&\quad \times
			\int_0^t (t-r)^{\frac{\alpha-\beta}{2}-\frac{1}{2}}\, \E^{\frac1{\tilde{p}}}\!\!\! \sup_{|h|_{L^{q_*}}= 1}\!\!\! |Y^{\tau_{k+1}^h}_{r}|_{L^{q_*}}^{\tilde{p}} \,\mrd r \qquad
			\\  &\lesssim
			|\e-\bar\e|^{\kappa}
			\Big(\prod_{i=1}^k \int_0^t (t-r)^{\frac{\alpha}{2}}\, r^{\frac{|\tau_i|-\alpha-\varsigma}{2}} \,\mrd r\Big)
			\\& \quad \times
			\int_0^t (t-r)^{\frac{\alpha-\beta}{2}-\frac{1}{2}}\, r^{\frac{|\tau_{k+1}|-\alpha-\kappa-\varsigma}{2}} \,\mrd r \qquad
			\\ 
			[.4em]
			&\lesssim
			t^{\frac{|\tau|-\beta-\kappa-(k+1)\varsigma}{2}}|\e-\bar\e|^{\kappa}\;,\qquad
		\end{equs}
		where $\tilde{p}=(k+1)p$ and, as before, $\varsigma>0$ represents an arbitrarily small quantity.
		In the third bound of \eqref{eq:proof-of-induction}, we applied the no-difference counterpart of 
		\eqref{eq:Kol-C_beta} and the inductive hypothesis 
		as follows. 
		For the factor corresponding to $\tau_i\neq \tau_{k+1}$, we used \eqref{eq:Kol-C_beta}
		(with $\beta^-=\alpha$, $\beta=\alpha+\varsigma/2$, $\kappa=0$, $\delta=-|\tau_i|/2+\alpha/2+\varsigma/2$, and with $p$ therein replaced by $\tilde{p}$), yielding
		\begin{equ}
			\E^{\frac{1}{\tilde{p}}} |Y^{\tau_i}_{r}|_{\CC^\alpha}^{\tilde{p}} \lesssim
			r^{\frac{|\tau_i|-\alpha}{2}-\frac{\varsigma}{2}}\;.
		\end{equ}
		We further applied the inductive hypothesis \eqref{eq:inductive-hypothesis} (with $\beta=\alpha$, $\kappa\in[0,1+k\alpha)$, $\delta=-|\tau_{k+1}|/2+\alpha/2+\varsigma/2$, and with $p$ therein replaced by $\tilde{p}$) to estimate the factor associated with $\tau_{k+1}$.
		In the final step in \eqref{eq:proof-of-induction}, we also used that
		\begin{equ}[eq:sing_criterion]
			(\alpha-\beta-1)/2\,,\; \alpha/2\,,\; ( |\tau_i|-\alpha-\kappa)/2 \;>\; -1
		\end{equ}
		so that the singularities are integrable. 
		For the final bound in \eqref{eq:sing_criterion}, note that $|\tau_i|\geq \big|I'(\<Xi>) \prod_{i=1}^kI(\<Xi>)\big| = (k+1)\alpha-1$ due to $\mcb{i}(\tau_i) \ge 1$ and $|\cdot|$ being increasing in $\noise{\cdot}$ by the assumption $\alpha+1/k>0$ and by Remark~\ref{rem:homogeneity}.
		Thus $( |\tau_i|-\alpha-\kappa)/2 \geq \frac{k\alpha-1-\kappa}{2}>-1$, where the final inequality is due to $\kappa < 1+k\alpha$.
		
		Taking $\varsigma>0$ arbitrarily small, we conclude that \eqref{eq:inductive-hypothesis} holds for $\tau$ as well, which completes the induction step
		and thus the proof of \eqref{eq:Z_eps_diff.}.
		
		\medskip
		
		\textit{Proof of \eqref{eq:Z_time_diff.}.}
		We prove \eqref{eq:Z_time_diff.} by combining the spectral-gap argument
		used in the proof of \eqref{eq:Z_eps_diff.} with the no-difference bounds
		established above.
		We note that using \eqref{eq:SG-for-Y^tau}--\eqref{eq:Kol-C_beta}  with
		$\kappa=0$, one similarly obtains 
		\begin{equ}[eq:Z_eps]
			\| \scal{t^{\delta} X^{\tau,\e}_t , \phi^\lambda_z} \|_{L^p} \lesssim  \lambda^{\beta}\;,
		\end{equ}
		with $\tau\in\CT^N$, $\beta \in [\alpha,0]$, $\delta>-|\tau|/2+\beta/2$, 
		and all $p\geq1$, and uniformly in $t\in(0,1)$, $\e\in(0,1)$, $\lambda\in(0,1]$, and $z\in\T^n$.
		The same estimate holds uniformly for the corresponding
		Fourier-truncated objects. 
		As above, whenever the spectral-gap inequality is invoked, all statements are first formulated for the objects constructed from the Fourier truncations, and we suppress the
		superscript $(R)$. 
		
		For $t>2s$ we can use the no-difference version of \eqref{eq:Z_eps_diff.} with $\kappa=0$ to conclude
		\[
		\|  \scal{t^{\delta+\kappa/2} X_t^{\tau,\e},\phi^\lambda_z}\|_{L^p}+
		\|\scal{s^{\delta+\kappa/2} X_s^{\tau,\e},\phi^\lambda_z} \|_{L^p} \lesssim (t^{\kappa/2} + s^{\kappa/2})\lambda^\beta \lesssim |t-s|^{\kappa/2} \lambda^{\beta}\;,
		\]
		which implies \eqref{eq:Z_time_diff.} by the triangle inequality.
		
		It thus suffices to consider the case $s<t\le2s$.
		Again, since the arguments for the two possible tree structures are
		analogous, we restrict ourselves to trees of the form
		$\tau=I'(\tau_{k+1})\prod_{i=1}^k I(\tau_i)$.
		
		For $\kappa\in[0,1+k\alpha)$, set
		\[
		Z^\e_{t,s}
		\eqdef
		t^{\delta+\kappa/2}X_t^{\tau,\e}
		-
		s^{\delta+\kappa/2}X_s^{\tau,\e}\;,
		\qquad
		\CP_{t,s}\eqdef\CP_t-\CP_s\;,
		\]
		and decompose $Z^\e_{t,s}$ as
		\begin{equs}[eq:time-diff_split.]
			Z^\e_{t,s}
			&=
			s^{\delta+\kappa/2}
			\sum_{j=1}^k
			\big(D\CP_s\star X^{\tau_{k+1},\e}\big)
			\Big(
			\prod_{i<j}\CP_t\star X^{\tau_i,\e}
			\Big)
			\big(\CP_{t,s}\star X^{\tau_j,\e}\big)
			\prod_{i>j}\CP_s\star X^{\tau_i,\e}
			\\
			&\quad
			+
			s^{\delta+\kappa/2}
			\big(D\CP_{t,s}\star X^{\tau_{k+1},\e}\big)
			\prod_{i=1}^k \CP_t\star X^{\tau_i,\e}
			\\
			&\quad
			+
			\bigl(t^{\delta+\kappa/2}-s^{\delta+\kappa/2}\bigr)
			\big(D\CP_t\star X^{\tau_{k+1},\e}\big)
			\prod_{i=1}^k \CP_t\star X^{\tau_i,\e}\;.
		\end{equs}
		For the final summand in \eqref{eq:time-diff_split.}, by the no-difference version of \eqref{eq:Z_eps_diff.} with $\kappa=0$ therein,
		\begin{equation}
			\big|t^{\delta+\kappa/2}-s^{\delta+\kappa/2}\big|
			\big\|
			\scal{X_t^{\tau,\e},\phi_z^\lambda}
			\big\|_{L^p}
			\lesssim
			\big|t^{\delta+\kappa/2}-s^{\delta+\kappa/2}\big|
			\,t^{-\delta}\lambda^\beta
			\lesssim
			|t-s|^{\kappa/2}\lambda^\beta,
		\end{equation}
		where the last inequality uses $s<t\le2s$.

		To establish the claimed bound for the term in the second line of \eqref{eq:time-diff_split.} (the other summands would be estimated in exactly the same way), proceeding as in the proof of \eqref{eq:Z_eps_diff.}, we aim to show that, for all $p\ge 1$, 
		$\beta\in[\alpha,0]$, $\kappa\in[0,1+k\alpha)$, $\delta>-|\tau|/2+\beta/2$, and uniformly in $\e\in(0,1)$ and $s,t\in(0,1)$ with $t\in(s,2s]$,
		\begin{equ}[eq:inductive-hypothesis.]
			\Big\| \sup_{|h|_{L^{q_*}}= 1} |Y^{\tau^h}_{t,s}|_{L^{-n/\beta}}\Big\|_{L^p} \lesssim
			t^{-\delta-\kappa/2}|t-s|^{\kappa/2}\;.	
		\end{equ}
		Here and below, all leaves of trees $\tau,\tau_i$ now represent $X^\e$, and $Y^{\tau^h}_{t,s}$ is constructed analogously to $Y^{\tau^h}_{t}$, except 
		that exactly one occurrence of the heat kernel $G^{(j)}_{t - \cdot}$ (associated with the root) is replaced by $G^{(j)}_{t - \cdot} - G^{(j)}_{s - \cdot}$.
		
		\medskip
		
		\textbf{Trees with one inner vertex.}
		Recall the choices $q=-n/\beta$ and ${q_*}=-n/\alpha$. In a case corresponding to \eqref{eq:base-case}, we have
		\begin{equs}[eq:base-case.]
			\E^{\frac{1}{p}}\!\!\sup_{|h|_{L^{q_*}}= 1}\!\! \big| & \big(\CP_{t} X^\e\big)^{k-1}\CP_{t} h^\e\,  D\CP_{t,s} X^\e \big|^p_{L^{q}}
			\\
			&\lesssim   
			\big(\prod_{i=1}^{k-1}\E^{\frac{1}{kp}}|\CP_{t} X^\e|_{L^\infty}^{kp}\big)
			\,\E^{\frac{1}{kp}}|D\CP_{t,s} X^\e|_{L^\infty}^{kp} \sup_{|h|_{L^{q_*}}= 1} |\CP_t h^\e|_{L^{q}} \qquad
			\nonumber\\
			&\lesssim 
			t^{\frac{(k-1)(\alpha-\varsigma)}{2}}
			|t-s|^{\frac{\kappa}{2}}\, t^{\frac{\alpha-1-\kappa-\varsigma}{2}}\, \E^{\frac{1}{p}} |X^\e|_{\CC^{\alpha-\varsigma}}^{kp}  \, t^{\frac{\alpha-\beta}{2}}\qquad
			\nonumber\\
			[.4em]
			& \lesssim |t-s|^{\frac{\kappa}{2}}\,  t^{\frac{(k+1)\alpha-1-\beta-\kappa-k\varsigma}{2}}\;,\qquad
		\end{equs}
		where \lem{lem:heat-flow-estimates}\ref{item:Hol_diff} together with \eqref{eq:X_eps_diff} (with $\kappa=0$ therein) is employed to bound the term involving the time difference of the heat kernel.
		The other factors are estimated in exactly the same way as in \eqref{eq:base-case}. 
		In the above, we consider the object in which the heat kernels associated with edges labelled by $I$ are evaluated at time $t$. However, since $t\in (s,2s]$ by assumption, the same bound holds if some of these kernels are instead evaluated at time $s$.
		
		The remaining cases associated with $I'(\<Xi>) \prod_{i=1}^k I(\<Xi>)$ would be estimated analogously, thereby completing the proof of \eqref{eq:inductive-hypothesis.} for the case $\tau=I'(\<Xi>) \prod_{i=1}^k I(\<Xi>)$.
		
		\medskip
		
		\textbf{General trees.}
		The proof of \eqref{eq:inductive-hypothesis.} for trees of the form $I'(\tau_{k+1}) \prod_{i=1}^k I(\tau_i)$ with $\tau_i\neq \<Xi>$ proceeds via estimates similar to those in \eqref{eq:proof-of-induction}, adjusted analogously to the simpler case above, as detailed below.
		
		For the corresponding case in which
		$\tau^h = I'(\tau_{k+1}^h) \prod_{i=1}^k I(\tau_i)$,
		the edge labelled by $I'$ represents the increment
		$G^{(j)}_{t-\cdot} - G^{(j)}_{s-\cdot}$, $\; |j|=1$,
		and all other heat kernels are evaluated at time $t$,
		\begin{equ}[eq:proof-of-induction.]
			\E^{\frac{1}{p}}\!\!\sup_{|h|_{L^{q_*}}= 1}\!\! |Y^{\tau^h}_{t,s}|_{L^{q}}^p
			\lesssim
			\Big(\prod_{i=1}^k \E^{\frac{1}{\tilde{p}}} |\CP_t\star Y^{\tau_i}|_{L^\infty}^{\tilde{p}}\Big) \;
			\,\E^{\frac{1}{\tilde{p}}}\!\! \sup_{|h|_{L^{q_*}}= 1}\!\! |D\CP_{t,s}\star Y^{\tau_{k+1}^h}|_{L^{q}}^{\tilde{p}}\;.
		\end{equ}
		The factor associated with $\tau_i\neq\tau_{k+1}$ is estimated using
		the Kolmogorov consequence of \eqref{eq:Z_eps}, with
		$\beta^-=\alpha,
		\,
		\beta=\alpha+{\varsigma}/{2},
		\,
		\delta=
		-{|\tau_i|}/{2}
		+{\alpha}/{2}
		+{\varsigma}/{2}$,
		and with $p$ replaced by $\tilde p$. This yields
		\[
		\left\|
		|Y_r^{\tau_i}|_{\CC^\alpha}
		\right\|_{L^{\tilde p}}
		\lesssim
		r^{\frac{|\tau_i|-\alpha-\varsigma}{2}}\;.
		\]
		
		For the factor corresponding to $\tau_{k+1}$, 
		\begin{equs}
			\E^{\frac{1}{\tilde{p}}}\!\! \sup_{|h|_{L^{q_*}}= 1}\!\! |D\CP_{t,s}\star Y^{\tau_{k+1}^h}|_{L^{q}}^{\tilde{p}}  & \lesssim
			\int_0^{s} \E^{\frac1{\tilde{p}}}\!\!\! \sup_{|h|_{L^{q_*}}= 1}\!\!\! |D\CP_{t-r,s-r}Y^{\tau_{k+1}^h}_{r}|_{L^{q}}^{\tilde{p}} \,\mrd r
			\\ & \quad + 
			\int_s^{t} \E^{\frac{1}{\tilde{p}}}\!\! \sup_{|h|_{L^{q_*}}= 1}\!\! |D\CP_{t-r} Y_r^{\tau_{k+1}^h}|_{L^{q}}^{\tilde{p}} \,\mrd r
			\\ & \lesssim
			\int_0^{s} \!\!|t-s|^{\frac{\kappa}{2}} (s-r)^{\frac{\alpha-\beta-1-\kappa}{2}} \, \E^{\frac1{\tilde{p}}}\!\!\! \sup_{|h|_{L^{q_*}}= 1}\!\!\! |Y^{\tau_{k+1}^h}_{r}|_{L^{q_*}}^{\tilde{p}} \,\mrd r
			\\ & \quad + 
			\int_s^{t} \!(t-r)^{\frac{\alpha-\beta-1}{2}}\, \E^{\frac1{\tilde{p}}}\!\!\! \sup_{|h|_{L^{q_*}}= 1}\!\!\! |Y^{\tau_{k+1}^h}_{r}|_{L^{q_*}}^{\tilde{p}} \,\mrd r
			\\ &\lesssim
			|t-s|^{\frac{\kappa}{2}} t^{\frac{1+|\tau_{k+1}|-\beta}{2}-\frac{\kappa}{2}-\frac{\varsigma}{2}}\;,
		\end{equs}
		where $\varsigma>0$ is sufficiently small. The second inequality follows from Lemma~\ref{lem:heat-flow-estimates}\ref{item:Leb_diff}, and the final step uses \eqref{eq:inductive-hypothesis} (with $\kappa=0$ therein). 	
		We also used that $\kappa\in[0,1+k\alpha)$ and $t\in(s,2s]$.
		
		In conclusion, the above shows the desired bound
		\begin{equ}
			\E^{\frac{1}{p}}\!\!\sup_{|h|_{L^{q_*}}= 1}\!\! |\big(\prod_{i=1}^k\CP_t\star Y^{\tau_i}\big)\, D\CP_{t,s}\star Y^{\tau_{k+1}^h}|_{L^{q}}^p
			\lesssim |t-s|^{\frac{\kappa}{2}} \, t^{\frac{|\tau|-\beta-\kappa-(k+1)\varsigma}{2}}\;
		\end{equ}
		for trees $\tau^h=I'(\tau_{k+1}^h)\prod_{i=1}^kI(\tau_i)$ with $\tau_i\neq\<Xi>$, $i\in[k+1]$.
		The argument for the remaining cases with $\tau=I'(\tau_{k+1}) \prod_{i=1}^k I(\tau_i)$, as well as trees of the form $\tau=\prod_{i=1}^{2k+1} I(\tau_i)$, proceeds in exactly the same way. This completes the proof of \eqref{eq:inductive-hypothesis.}. It then remains to apply Lemma~\ref{lem:X^tau,e_mean-zero} and the spectral gap assumption exactly as in the steps \eqref{eq:SG-for-Y^tau}--\eqref{eq:inductive-hypothesis} 
		in the proof of \eqref{eq:Z_eps_diff.} 
		to obtain \eqref{eq:Z_time_diff.}.
	\end{proof}

	\section{Proof of \theo{thm:main}}
	\label{sec:proof-of-main}
	\begin{proof} [of \theo{thm:main}]
		Recall $\alpha = -n/q_*$ from Definition~\ref{def:homogeneity}.
		Set $N = \floor{1/(k\alpha+1)}$ and, for $\kappa>0$ sufficiently small,
		define $\omega_{\<Xi>}\eqdef \alpha-\kappa$
		and, for all $\tau\in \CT^N$,
		\begin{equ}
			\beta_\tau \eqdef \alpha\;,\qquad 	
			\delta_\tau \eqdef -|\tau|/2+\beta_\tau/2 +\kappa/2\;,  \qquad
			\omega_\tau \eqdef \beta_\tau-2\delta_\tau+2\;.
		\end{equ}
		Note that, by \rem{rem:homogeneity},
		$\omega_{\tau}=\noise{\tau}(\alpha+1/k)-1/k-\kappa$ for all $\tau\in\CT^N_{\<Xi>}$.
		We claim that, for $\kappa$ sufficiently small, $N, \omega_{\<Xi>}, \bmbeta,\bmdelta$ satisfy condition~\eqref{eq:CI}.
		
		To prove the claim, we first note that since $\alpha\in(-1/k,0)$, we have $\beta_\tau\in(-1,0)$  for all $\tau\in\CT^N$. 
		Furthermore, for any $ \tau\in \CT^N$, we have $ \delta_\tau<1$ as it is equivalent to
		\begin{equs}
			-\noise{\tau}\big(\tfrac{1}{2k}+\tfrac{\alpha}{2}\big)+ \tfrac{2k+1}{2k}+\tfrac{\alpha}{2}+ \tfrac{\kappa}{2} < 1 \, 	
			\iff
			( \noise{\tau}-1)\big(\tfrac{1}{k}+{\alpha}\big) > \kappa
		\end{equs}		
		(and the last inequality holds for sufficiently small $\kappa$ because $\noise{\tau}>1$ and $\alpha>-1/k$).
		Next, since $\noise{\tau}\le 1/(k\alpha+1)$ and $\kappa>0$, it is clear that $\noise{\tau}(\alpha+1/k)<1/k+\kappa$, which shows $\omega_\tau < 0$. 
		
		It remains to verify that $\omega>-1/k$ and $2\lambda,\gamma >-2$ for $\omega,\lambda,\gamma$ as in \eqref{eq:CI}.
		Observe that $\omega_{\tau}=\noise{\tau}(\alpha+1/k)-1/k-\kappa$ is increasing in $\noise{\tau}$ (since $\alpha>-1/k$).
		Therefore, $\omega = \omega_{\<Xi>} = \alpha -\kappa > -1/k$, where the inequality holds for $\kappa$ sufficiently small (again, also since $\alpha>-1/k$).
		
		To verify $2\lambda,\gamma>-2$, note that $\omega_{\tau}$ is affine and increasing in $\noise{\tau}$. Hence,
		\begin{equs}
			\min&\Big\{\sum_{i=1}^{k+1}\omega_{\tau_i} \,:\, \tau_i \in\CT_{\<Xi>}^N\,,\; \sum_{i=1}^{k+1}\noise{\tau_i}>N\Big\}
			\\ &\qquad\qquad\ge
			k\alpha+ (N-k+1)({\alpha+1/k})-\tfrac{1}{k}-(k+1)\kappa\;,
			\\
			\min&\Big\{\sum_{i=1}^{2k+1}\omega_{\tau_i} \,:\, \tau_i \in\CT_{\<Xi>}^N\,,\; \sum_{i=1}^{2k+1}\noise{\tau_i}>N\Big\}
			\\ &\qquad\qquad\ge
			2k\alpha+ (N-2k+1)(\alpha+1/k)-\tfrac{1}{k}-(2k+1)\kappa\;.
		\end{equs}
		By the above inequalities, the conditions $\lambda>-1$ and $\gamma>-2$ are satisfied provided
		\[
		N>\frac{-\alpha+(k+1)\kappa}{\alpha+1/k}
		\quad\text{ and }\quad
		N>\frac{-\alpha+(2k+1)\kappa}{\alpha+1/k}\;.
		\]
		Since $N=\floor{1/(k\alpha+1)}$, these inequalities hold for sufficiently small $\kappa$.
		
		In conclusion, the above shows that $N,\omega_{\<Xi>},\bmbeta,\bmdelta$ satisfy condition~\eqref{eq:CI}.
		Define $\init\equiv\init_{\omega_{\<Xi>},\bmbeta,\bmdelta}$ and $\Theta\equiv\Theta_{\omega_{\<Xi>},\bmbeta,\bmdelta}$ as in Definition~\ref{def:norms+init+}. 
		Since $\beta_\tau>-1$ and $\delta_\tau<1$ for all $\tau \in \CT^N$,
		\prop{prop:closable_graph} implies that $(\CI,\Theta)$ is continuously embedded in $\CC^{\omega_{\<Xi>}}$.
		
		Furthermore, the assumptions of \theo{thm:convergence-of-mollifications} hold with our choice of parameters, and therefore $\lim_{\eps\downarrow0} X^\eps = X$ in $(\init,\Theta)$ a.s. and in $L^p(\P)$ for all $p \in [1,\infty)$ (since the only possible limit point of $X^\eps$ is $X$).
		In particular, $X$ a.s. takes values in $\CI$.
		
		\medskip
		
		Take now any $ \theta> 0$ such that 
		$k\omega, \lambda > 2\theta-1$ and $\gamma/2-\theta >-1$.
		Define $M = \sup_{\e\in[0,1]}\Theta(X^\e)$ with $X^0 = X$.
		By \theo{thm:convergence-of-mollifications}, $M$ has finite moments of all orders.
		Since $X\in \init$ a.s., by \theo{thm:lwp}, there exists $\nu>0$ such that, for $T^{\nu} \asymp (1+M)^{-2k}$,
		there exists a unique solution $A$ to \eqref{eq:A_eq}-\eqref{eq:A_ic} on $(0,T)\times\T^n$ given by
		\begin{equ}
			A_t = R_t(X)+\CP_t \star \CS_{\<Xi>}^N X\;,\quad R(X) \in \CB_T
		\end{equ}
		and that has initial condition $X$ in the sense of Theorem~\ref{thm:lwp}, and likewise for $X^\eps$.
		Since $\E M^p < \infty$ for all $p \ge 1$, we obtain the second assertion in \eqref{eq:convergence}.
		
		To prove the convergence in \eqref{eq:convergence}, we write using the triangle inequality
		\begin{equ}[eq:A^eps-A]
			|A_t^\e - A_t|_{\CC^\eta} \le |R_t(X^\e)-R_t(X)|_{\CC^\eta} + |\CP_t \star \CS_{\<Xi>}^N X^\e-\CP_t \star\CS_{\<Xi>}^N X|_{\CC^\eta}\;.
		\end{equ}
		For the first term on the right-hand side of \eqref{eq:A^eps-A}, we know from \theo{thm:lwp} that
		\begin{equ}[eq:R]
			|R_t(X^\e)-R_t(X)|_{\CC^\eta}\leq |R_t(X^\e)-R_t(X)|_{\infty} \leq t^\theta \Theta(X^\e,X)\;.
		\end{equ}
		For the second term, since $\eta<\alpha$, we can ensure $\beta_\tau, \omega>\eta$ for all $\tau\in\CT^N$ by taking $\kappa$ sufficiently small.
		If $\CT^N\neq\emptyset$, set $\delta = \max_{\tau\in\CT^N}\delta_\tau$.
		Then by the triangle inequality and the definition of $\Theta\equiv \Theta_{\omega_{\<Xi>},\bmbeta,\bmdelta}$, we have
		\begin{equ}
			|\CS_{t}^N X^\e-\CS_{t}^N X|_{\CC^\eta}\lesssim t^{-\delta}\Theta( X^\e, X)\;,
		\end{equ}
		from which together with the fact that $\delta<1$, we obtain
		\begin{equ}
			|\CP_t \star \CS^N X^\e-\CP_t \star \CS^N X|_{\CC^\eta}\lesssim t^{1-\delta}\Theta(X^\e, X)\;.
		\end{equ}
		If $\CT^N=\emptyset$, then $\CS_{t}^N X^\e\equiv 0\equiv \CS_{t}^N X$ and the above bound holds trivially.
		We also know, again from the definition of $\Theta$, that
		\begin{equ}
			|X^\e- X|_{\CC^{\omega_{\<Xi>}}}\le \Theta( X^\e, X)\;.
		\end{equ}
		Therefore, since $\eta<\omega_{\<Xi>}$,
		\begin{equ}
			|\CP_t \star \CS_{\<Xi>}^N X^\e-\CP_t \star \CS_{\<Xi>}^N X|_{\CC^\eta}\lesssim \Theta( X^\e, X)\;,
		\end{equ}
		which combined with \eqref{eq:A^eps-A} and \eqref{eq:R} implies
		\begin{equ}
			\sup_{\!t\in[0,T]}|A^\e_t - A_t|_{\CC^\eta} \lesssim \Theta( X^\e, X)\;.
		\end{equ}
		We similarly obtain, using heat flow estimates,
		\begin{equ}
			\sup_{\!t\in(0,T)} t^{-\eta/2}|A^\e_t - A_t|_\infty\lesssim \Theta( X^\e, X)\;.
		\end{equ}
		The claimed a.s.- and $L^p$-convergence, $p\in [1,\infty)$, in \eqref{eq:convergence}
		now follow from the last two inequalities and that $\Theta(X^\e, X)\to 0$ a.s. and in $L^p$.
		The facts that $A$ is smooth on $(0,T)\times \T^n$ and $\lim_{t\downarrow 0} |A_t-X|_{\CC^\eta} = 0$ a.s. are direct consequences of \theo{thm:lwp} since we can take $\omega>\eta$.
		
		Finally, 
		we remark that $T$ in principle depends on $\eta$.
		However, fixing first $\eta<-n/q_*$ and $T=T_\eta$ as above,
		we claim that the convergence \eqref{eq:convergence} in fact holds for this $T$ for every other fixed $\bar\eta<-n/q_*$.
		Indeed, it suffices to consider $\bar\eta\in[\eta,-n/q_*)$.
		We can also assume without loss of generality that $T_{\bar\eta}\leq T$.
		Then
		\[
		\sup_{t\in [T_{\bar\eta},T]}
		t^{-\bar\eta/2}|A^\e_t - A_t|_\infty
		\leq
		T_{\bar\eta}^{\eta/2-\bar\eta/2} \sup_{t\in [T_{\bar\eta},T]} t^{-\eta/2}
		|A^\e_t - A_t|_\infty
		\]
		which converges to $0$ a.s. and in $L^p(\P)$ as $\e\downarrow 0$.
		Similarly so does
		\[
		\sup_{t\in [T_{\bar\eta},T]}|A^\e_t-A_t|_{\CC^{\bar\eta}}
		\leq
		\sup_{t\in [T_{\bar\eta},T]}|A^\e_t-A_t|_{\infty}
		\leq T_{\bar\eta}^{\eta/2}\sup_{t\in [T_{\bar\eta},T]}t^{-\eta/2}|A^\e_t-A_t|_{\infty}\;.
		\]
		This proves the final claim and shows that one $T$ works for all $\eta<-n/q_*$.
	\end{proof}
	
	\appendix
	\section{Spectral gap inequality for random fields}
	
	\subsection{Gaussian fields}
	\label{app:SG}

	We aim to verify Assumption~\ref{ass:field} for a Gaussian field $X$ satisfying Assumption~\ref{ass:GF} below. As a consequence, we show that any field satisfying \cite[Assumption~1.1]{CM25} also satisfies Assumption~\ref{ass:field} with $k=1$.
	The parameter $\gamma>0$ below should be interpreted as $d-2$, where $d$ is the scaling dimension of the Gaussian field.
	
	\begin{assumption}\label{ass:GF}
		Suppose $n\ge 1$ is an integer and $\gamma\in(0,n\wedge 2/k)$.
		Let $X$ be a centred, stationary $E$-valued Gaussian random field on $\T^n$
		whose covariance is represented by a kernel
		$C\in L^1(\T^n)$, i.e.
		\begin{equation}\label{eq:cov_def}
			\Cov(\scal{ X,\phi}, \scal{ X,\psi})
			=
			\scal{\phi,C*\psi} \;,
		\end{equation}
		such that $C(x)=C(-x)$ and
		\begin{equation}\label{eq:C_bound}
			\sup_{x\in\T^n\setminus\{0\}}|x|^\gamma\big|C(x)\big| < \infty\;.
		\end{equation}
	\end{assumption}

	\begin{proposition}\label{prop:GF}
		Suppose $X$ is a stationary Gaussian field with covariance $C$ as in \eqref{eq:cov_def} satisfying the bound \eqref{eq:C_bound} for some $\gamma\in [0,n)$.
		Then $X$ satisfies the spectral gap inequality \eqref{eq:SG-inequality} in the sense of Definition \ref{def:SG} with $L^{s}$-norm for any $s > \frac{2n}{2n-\gamma}$.
		If $\gamma=0$, we can take $s=1$.
	\end{proposition}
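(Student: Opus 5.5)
The plan is to start from the Gaussian Poincaré inequality and convert the covariance norm into an $L^s$-norm via the Hardy--Littlewood--Sobolev inequality. For a Gaussian field $X$ with covariance operator $f\mapsto C*f$ and a bounded cylindrical $F(X)=f(\scal{X,\phi_1},\ldots,\scal{X,\phi_m})$, the Poincaré inequality gives
\[
\Var F(X) \le \E\Big\langle \frac{\delta F}{\delta X}, C*\frac{\delta F}{\delta X}\Big\rangle\;.
\]
This follows from the finite-dimensional Gaussian Poincaré inequality applied to the Gaussian vector $(\scal{X,\phi_i})_{i\le m}$, whose covariance matrix is $\scal{\phi_i,C*\phi_j}$. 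Then $\E^{1/2}|F|^2\le |\E F|+\Var^{1/2}F$, so \eqref{eq:SG-inequality} holds once we show the deterministic bound
\[
|\scal{g,C*g}| \lesssim \|g\|_{L^s}^2
\]
for all $g\in\CD(\T^n;E)$.

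For $\gamma=0$, $C$ is bounded, so $|\scal{g,C*g}|\le |C|_\infty |g|_{L^1}^2$, which gives $s=1$.

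For $\gamma\in(0,n)$, I would use $|C(x)|\lesssim |x|^{-\gamma}$ on $\T^n$ to bound $|\scal{g,C*g}|\le \iint |g(x)||g(y)||x-y|^{-\gamma}\,\mrd x\,\mrd y$, with $|x-y|$ the torus distance. Identify $\T^n$ with $[-\frac12,\frac12)^n$ and extend $|g|$ by zero to $\R^n$. The torus distance equals $\min_{z}|x-y+z|$ over finitely many $z\in\Z^n$ with $|z|_\infty\le1$, so $|x-y|_{\T^n}^{-\gamma}\le \sum_{z}|x-y+z|^{-\gamma}$. Each summand is handled by Hardy--Littlewood--Sobolev on $\R^n$ with translated $|g|$:
\[
\iint_{\R^n\times\R^n} \frac{u(x)v(y)}{|x-y|^{\gamma}}\,\mrd x\,\mrd y\lesssim \|u\|_{L^{s_0}}\|v\|_{L^{s_0}}\;, \qquad \frac{2}{s_0}+\frac{\gamma}{n}=2\;,
\]
that is, $s_0=\frac{2n}{2n-\gamma}\in(1,2)$, which is valid since $0<\gamma<n$. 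For $s>s_0$, the finite measure of the torus gives $\|g\|_{L^{s_0}}\le\|g\|_{L^s}$. (One could in fact take $s=s_0$; the strict inequality is harmless, and one can avoid the sharp HLS inequality altogether by using Young's inequality with $|x|^{-\gamma}\in L^{r}(\T^n)$ for $r<n/\gamma$. This explains the strict condition $s>\frac{2n}{2n-\gamma}$ as stated.)

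Using Young's inequality is the simplest route, and I would present that. Since $|x|^{-\gamma}\in L^r(\T^n)$ for every $r<n/\gamma$, we get $|C*g|_{L^{s'}}\lesssim |g|_{L^s}$ whenever $1+\frac1{s'}=\frac1s+\frac1r$. Hölder's inequality then gives $|\scal{g,C*g}|\le |g|_{L^s}|C*g|_{L^{s'}}$, where $s'$ is the conjugate of $s$. The conjugacy requirement becomes $\frac2s=1+\frac1r$, and $r<n/\gamma$ is equivalent to $s>\frac{2n}{2n-\gamma}$. The main point needing care is the rigorous justification of the Poincaré inequality for cylindrical functionals of the infinite-dimensional field. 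Reducing to the finite Gaussian vector settles this, including the degenerate-covariance case, by approximating with nondegenerate vectors or using the standard Poincaré inequality for arbitrary Gaussian measures on $\R^m$.
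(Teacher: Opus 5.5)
Your proposal is correct and matches the paper's proof: you reduce the Gaussian Poincar\'e inequality to the finite-dimensional vector $(\langle X,\phi_i\rangle)_i$ to get $\mathrm{Var}(F)\le \E\langle \frac{\delta F}{\delta X}, C*\frac{\delta F}{\delta X}\rangle$, and then conclude by duality plus Young's inequality with $C\in L^r$, $r<n/\gamma$ (and $r=\infty$, $s=1$ when $\gamma=0$). Two bookkeeping fixes are needed: Young's relation gives $\frac1r=2-\frac2s$, i.e.\ $r=\frac{s}{2(s-1)}$, not $\frac2s=1+\frac1r$ (which would force $s<\frac{2n}{n+\gamma}$ instead); and you should first reduce to $s\le 2$ via monotonicity of $L^s(\mathbb{T}^n)$-norms so that $r\ge 1$, as the paper does. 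Your HLS aside is a valid bonus that also covers the endpoint $s=\frac{2n}{2n-\gamma}$.
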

	
	For the proof of \prop{prop:GF}, we use the following lemma.
	\begin{lemma}\label{lem:SG-for-Gaussians}
		Let $X$ be a stationary $E$-valued Gaussian field on $\T^n$ with covariance $C$.
		Then for every bounded cylindrical functional $F$
		\begin{equ}\label{eq:SG-appendix_A}
			\Var(F)
			\leq
			\E\Big[\Big\langle\frac{\delta F}{\delta X},C*\frac{\delta F}{\delta X}\Big\rangle\Big]\;.
		\end{equ}
	\end{lemma}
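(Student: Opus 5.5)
The plan is to reduce to the classical Gaussian Poincaré inequality in finite dimensions. Since $F$ is bounded cylindrical, $F(X)=f(\scal{X,\phi_1},\ldots,\scal{X,\phi_m})$ with $f$ smooth and bounded. The vector $G\eqdef(\scal{X,\phi_1},\ldots,\scal{X,\phi_m})$ is a centred Gaussian vector in $\R^m$ with covariance matrix $\Sigma_{ij}=\scal{\phi_i,C*\phi_j}$, by \eqref{eq:cov_def}. (For $E$-valued fields and $\phi_i\in\CD(\T^n;E)$, one reads $C$ as an $\mathrm{End}(E)$-valued kernel and the pairing accordingly; nothing changes below.)

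I then invoke the Gaussian Poincaré inequality for a centred Gaussian vector $G$ with covariance $\Sigma$ (possibly degenerate):
\[
\Var f(G)\le \E\big[\nabla f(G)\cdot \Sigma\nabla f(G)\big]\;.
\]
This can be justified by writing $G=\Sigma^{1/2}Z$ with $Z$ standard Gaussian and applying the standard Poincaré inequality (constant $1$) to $g(z)=f(\Sigma^{1/2}z)$, whose gradient is $\Sigma^{1/2}\nabla f(\Sigma^{1/2}z)$. The inequality for standard Gaussians can be taken as known, or proved via the Ornstein--Uhlenbeck semigroup interpolation $\Var g=\int_0^\infty \mre^{-t}\E[\nabla g\cdot P_t\nabla g]\,\mrd t$ followed by Cauchy--Schwarz. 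Boundedness of $f$ is not essential; one only needs $\nabla f(G)\in L^2$, which holds since $\nabla f$ is smooth and $G$ Gaussian. If needed, I would approximate $f$ by functions with bounded derivatives.

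Finally, I identify the right-hand side. By definition, $\frac{\delta F}{\delta X}=\sum_i\partial_i f(G)\phi_i$, so bilinearity of $(\phi,\psi)\mapsto\scal{\phi,C*\psi}$ gives
\[
\Big\langle\tfrac{\delta F}{\delta X},C*\tfrac{\delta F}{\delta X}\Big\rangle=\sum_{i,j}\partial_i f(G)\partial_j f(G)\Sigma_{ij}=\nabla f(G)\cdot\Sigma\nabla f(G)\;.
\]
Taking expectations yields \eqref{eq:SG-appendix_A}.

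The only mildly delicate point is the finite-dimensional Poincaré inequality itself, including the degenerate-covariance case; the square-root reduction handles the latter. Stationarity is not actually needed here; it is used only later in Proposition~\ref{prop:GF}, where $\scal{g,C*g}$ is bounded by $|g|_{L^s}^2$ via Hardy--Littlewood--Sobolev.
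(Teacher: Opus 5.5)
Your proof is correct and is essentially the paper's: reduce to the finite-dimensional Gaussian Poincaré inequality for $Y=(\langle X,\phi_i\rangle)_i$ via a square-root factorisation of $\Sigma$, then identify $\bigl\langle \frac{\delta F}{\delta X}, C*\frac{\delta F}{\delta X}\bigr\rangle = \nabla f(Y)\cdot\Sigma\nabla f(Y)$. Two small points: the lemma does not assume $X$ is centred, so write $Y=\mu+\Sigma^{1/2}Z$ (as the paper does with $Y=\mu+MG$), which changes nothing; and your aside that $\nabla f(G)\in L^2$ follows from smoothness is false in general, but harmless, since the inequality holds trivially when the right-hand side is infinite.
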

	
	\begin{proof}
		Write $F=\Phi(Y)$
		where $\Phi\in\CC^\infty(\R^N)$ is bounded and $Y=(Y_1,\dots,Y_N)$ is given by $Y_i= \langle X,\phi_i\rangle$ for some $\phi_i\in\CC^\infty(\T^n;E)$.
		Then $Y$ is a Gaussian vector with covariance matrix
		\[
		\Sigma_{ij}
		=
		\operatorname{Cov}(Y_i,Y_j)
		=
		\langle \phi_i,C*\phi_j\rangle\;.
		\]
		By the Gaussian Poincar\'e inequality,
		\[
		\Var(\Phi(Y))
		\le
		\E
		\scal{ \nabla\Phi(Y) ,
			\Sigma
			\nabla\Phi(Y)
		}_{\R^N}
		\;,
		\]
		(see, e.g. \cite[Theorem~3.20]{BoucheronLugosiMassart2013} for the standard Gaussian case, and the general case follows by writing $Y = \mu + MG$, where $\mu=\E Y$, $G$ is a standard Gaussian vector, and $M$ is a matrix satisfying $\Sigma = MM^T$).
		\eqref{eq:SG-appendix_A} follows by writing
		$\frac{\delta F}{\delta X} = \sum_{i=1}^N \partial_i\Phi(Y)\phi_i$.
	\end{proof}

	\begin{proof}[of \prop{prop:GF}]
		By monotonicity of $L^s(\T^n)$-norms,
		it suffices to consider $s \in (\frac{2n}{2n-\gamma},2)$.
		Let $s'$ denote the conjugate exponent, i.e.\ $\frac1s+\frac{1}{s'}=1$. By duality,
		\[
		\Big|\Big\langle \frac{\delta F}{\delta X}, C*\frac{\delta F}{\delta X}\Big\rangle\Big|
		\le
		\Big|\frac{\delta F}{\delta X}\Big|_{L^{s}} \Big|C*\frac{\delta F}{\delta X}\Big|_{L^{s'}}\;.
		\]
		Then, by Young's convolution inequality,
		\[
		\Big|C*\frac{\delta F}{\delta X}\Big|_{L^{s'}}
		\le
		|C|_{L^r}\Big|\frac{\delta F}{\delta X}\Big|_{L^{s}}\,,
		\qquad
		\frac{1}{s}+\frac{1}{r}=1+\frac{1}{s'}=2-\frac{1}{s}\;,
		\]
		where we remark that $r>1$ since $s<2$.
		Therefore, using \eqref{eq:SG-appendix_A},
		$\mathrm{Var}(F)
		\le
		|C|_{L^r}\,
		\mathbb E|\frac{\delta F}{\delta X}|_{L^{s}}^2$.
		The assumption \eqref{eq:C_bound}
		implies $C\in L^r(\T^n)$ for $r<\frac{n}{\gamma}$,
		and the first claim follows from
		$\frac{s}{2(s-1)} = r<\frac{n}{\gamma}\iff	s > \frac{2n}{2n-\gamma}$.
		If $\gamma=0$, we can take $s=1$ and $r=\infty$ above.
	\end{proof}
	
	\begin{lemma}\label{lem:GF-symmetry}
		Consider a centred stationary $E$-valued Gaussian field $X$ on $\T^n$ with covariance kernel $C$ as in \eqref{eq:cov_def} satisfying $C(x)=C(-x)$.
		Then $X\eqdist -X$ and $\CR X \eqdist X$ 
		with reflection $\CR$ defined above Assumption \ref{ass:field}.
	\end{lemma}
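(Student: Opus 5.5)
Since a centred Gaussian field is determined in law by its covariance, the plan is to show that $-X$ and $\CR X$ are centred Gaussian fields with the same covariance as $X$. Both are images of $X$ under continuous linear maps on $\CD'(\T^n;E)$: multiplication by $-1$, and $\CR$, which acts by duality as $\scal{\CR X,\phi}=\scal{X,\CR\phi}$. Hence both are again centred Gaussian fields. To identify their laws it suffices to match the finite-dimensional distributions $(\scal{\cdot,\phi_1},\dots,\scal{\cdot,\phi_m})$ for test functions $\phi_i\in\CC^\infty(\T^n;E)$. These are centred Gaussian vectors, so it is enough to compare covariance matrices.

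For $-X$ the check is immediate from bilinearity:
\[
\Cov(\scal{-X,\phi},\scal{-X,\psi})=\Cov(\scal{X,\phi},\scal{X,\psi})=\scal{\phi,C*\psi}.
\]

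For $\CR X$, apply \eqref{eq:cov_def} to $\CR\phi$ and $\CR\psi$:
\[
\Cov(\scal{\CR X,\phi},\scal{\CR X,\psi})=\scal{\CR\phi,C*\CR\psi}.
\]
Next, change variables $x\mapsto -x$ in the integral defining $\scal{\CR\phi,C*\CR\psi}$, and then $y\mapsto -y$ inside the convolution. This gives
\[
\scal{\CR\phi,C*\CR\psi}=\int\!\!\int \phi(x)\,C(y-x)\,\psi(y)\,\mrd y\,\mrd x.
\]
Here $C(y-x)$ is understood as a linear map on $E$, and the pairing uses the inner product on $E$. By the symmetry $C(z)=C(-z)$, we have $C(y-x)=C(x-y)$, so the right-hand side equals $\scal{\phi,C*\psi}$. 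The same covariance therefore results.

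The only point needing care is the $E$-valued (matrix) nature of $C$. Symmetry of the covariance form means $\scal{\phi,C*\psi}=\scal{\psi,C*\phi}$, that is, $C(z)^T=C(-z)$. I expect the hypothesis $C(x)=C(-x)$ to be what is needed here, possibly combined with this transpose relation. I will write the computation componentwise, with an orthonormal basis of $E$, to confirm that only $C(x-y)$ versus $C(y-x)$ matters and not a transpose. Once the covariances agree, $-X\eqdist X$ and $\CR X\eqdist X$ follow from uniqueness of the Gaussian law given its mean and covariance.
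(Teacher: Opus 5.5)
Your proposal is correct and is essentially the paper's proof: $-X\eqdist X$ follows from centredness, and your change of variables shows that $\CR X$ has covariance kernel $\CR C$, i.e.\ $\scal{\phi,(\CR C)*\psi}$, which equals $\scal{\phi,C*\psi}$ by the hypothesis $C(x)=C(-x)$. Your transpose concern can be dropped, because your computation produces $C(y-x)$ directly, and $C(y-x)=C(x-y)$ is exactly the assumed identity of linear maps on $E$.
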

	
	\begin{proof}
		$X\eqdist -X$ follows because $X$ is centred,
		while $\CR X$ has covariance $\CR C$, which agrees with $C$ by assumption.
	\end{proof}
	
	\begin{corollary}\label{cor:GF-implies-field}
		Suppose \assu{ass:GF} holds. Then $X$ satisfies \assu{ass:field}.
	\end{corollary}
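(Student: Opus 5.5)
We verify the three ingredients of Assumption~\ref{ass:field} in turn: the spectral gap inequality with a suitable Lebesgue norm, stationarity, and the symmetry condition \eqref{eq:compatible-symmetry}.

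\textbf{Spectral gap.} We need some $q_*\in(nk,\infty)$ such that $X$ satisfies \eqref{eq:SG-inequality} with the $L^{q^*}$-norm, where $\frac1{q_*}+\frac1{q^*}=1$. By Proposition~\ref{prop:GF}, the inequality holds with the $L^s$-norm for every $s>\frac{2n}{2n-\gamma}$. It therefore suffices to choose $q_*$ with
\[
q^*>\frac{2n}{2n-\gamma}
\quad\iff\quad
\frac1{q_*}=1-\frac1{q^*}<1-\frac{2n-\gamma}{2n}=\frac{\gamma}{2n}
\quad\iff\quad
q_*>\frac{2n}{\gamma}\;.
\]
We also need $q_*>nk$. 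Since $\gamma<2/k$, we have $2n/\gamma>nk$. Hence any $q_*\in(2n/\gamma,\infty)$ works. Such $q_*$ exists because $\gamma>0$, so $2n/\gamma<\infty$. This choice gives $q^*\in(1,\frac{2n}{2n-\gamma})$, and the inequality $q^*>\frac{2n}{2n-\gamma}$ needs to be checked directly from the equivalences above. In fact $q_*>2n/\gamma$ gives $1/q^*=1-1/q_*>1-\gamma/(2n)$, that is, $q^*<\frac{2n}{2n-\gamma}$.

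This is the wrong direction, and it is the main point requiring care. The resolution is that duality makes the $L^{q^*}$-norm appear on the derivative, and a larger $s$ gives a \emph{weaker} requirement, by monotonicity $|\cdot|_{L^s}\le|\cdot|_{L^{s'}}$ on the torus for $s\le s'$. So we need $q^*\ge s$ for some admissible $s$, which means $q^*>\frac{2n}{2n-\gamma}$. Equivalently we need $q_*<\frac{2n}{\gamma}$, combined with $q_*>nk$. Such a $q_*$ exists precisely because $nk<2n/\gamma$, which is equivalent to $\gamma<2/k$. The condition $\gamma<n$ guarantees $\frac{2n}{2n-\gamma}<2$, so that Proposition~\ref{prop:GF} applies in its stated range. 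I would double-check these inequality directions carefully, since this is where the subcriticality $\gamma<2/k$ enters.

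\textbf{Stationarity and symmetry.} Stationarity is part of Assumption~\ref{ass:GF}. For the symmetry condition, Lemma~\ref{lem:GF-symmetry} gives both $X\eqdist-X$ and $\CR X\eqdist X$. Combining them yields $\CR X\eqdist X\eqdist -X$, so \eqref{eq:compatible-symmetry} holds for both $\sigma=0$ and $\sigma=1$. In particular the parity requirement $\sigma\equiv k \pmod 2$ is satisfied whatever $k$ is.
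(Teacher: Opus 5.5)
Your final argument is correct and matches the paper's proof: the symmetries come from Lemma~\ref{lem:GF-symmetry}, and Proposition~\ref{prop:GF} gives the spectral gap with $L^{q^*}$-norm for any $q_*\in(nk,2n/\gamma)$, an interval that is nonempty exactly because $\gamma<2/k$. Your initial claim $q_*>2n/\gamma$ is just an algebra slip: from $1/q^*<(2n-\gamma)/(2n)$ one gets $1/q_*=1-1/q^*>\gamma/(2n)$, not $<$. That passage should therefore simply be deleted rather than ``resolved'' via monotonicity of $L^s$-norms, which plays no role in this step.
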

	
	\begin{proof}
		$X$ satisfies the symmetry conditions in \assu{ass:field} by \lem{lem:GF-symmetry}.
		For every
		$q_* < 2n/\gamma$,
		its conjugate exponent $q^*=q_*/(q_*-1)$ satisfies
		$
		q^*>\frac{2n}{2n-\gamma}
		$
		so \prop{prop:GF} implies that $X$ satisfies the spectral gap inequality with 
		$L^{q^*}$-norm.
		It remains to remark that $nk<2n/\gamma
		\iff
		\gamma<2/k$,
		which holds by \assu{ass:GF},
		so we can take $q_*>nk$ as required in \assu{ass:field}.
	\end{proof}
	
	\subsection{A non-Gaussian example}
	\label{sec:non-Gaussian}
	\begin{example}
		\label{ex:random-phase-field}
		Let $E=\C\simeq\R^2$, viewed as a real inner product space with $\langle z,w\rangle_\C=\Re(z\overline w)$.
		Fix $\gamma\in(0,n\wedge 2/k)$.
		Let $(\theta_m)_{m\in\Z^n\setminus\{0\}}$ be independent random
		variables uniformly distributed on $[0,2\pi)$, and set
		\[
		a_m
		=
		(1+|m|^2)^{-\frac{n-\gamma}{4}}\,,
		\qquad m\in\Z^n\setminus\{0\}\;.
		\]
		Define the $\C$-valued random Fourier field
		\begin{equ}\label{eq:random-phase-field}
			X(x)
			=
			\sum_{m\in\Z^n\setminus\{0\}}
			a_m \mre^{\mri\theta_m}\mre^{2\pi\mri m\cdot x}\;.
		\end{equ}
		Then $X$ satisfies the spectral gap inequality
		\eqref{eq:SG-inequality} with $L^s$-norm for every
		$s>\frac{2n}{2n-\gamma}$.
		Consequently, $X$ satisfies Assumption~\ref{ass:field}.
	\end{example}
	
	\begin{remark}
		Random Fourier series as in \eqref{eq:random-phase-field} are classical and usually referred to as
		\emph{Steinhaus series}, see, e.g.
		\cite[Chapters~1,~5]{Kahane85} and
		\cite[Chapter~I]{MarcusPisier81}, which trace these series back to Paley--Zygmund \cite{Paley_Zygmund_1930}.
		
		The spectral gap argument below is also classical at the level of the
		underlying phases.  The uniform probability measure on $S^1$ satisfies
		the Poincar\'e inequality, and Poincar\'e inequalities are stable under
		products; see \cite[Proposition~4.3.1]{BakryGentilLedoux14}.
		The only point specific to the present example is to express the
		resulting product Dirichlet form in terms of the functional derivative and to bound the resulting Fourier multiplier by an
		$L^s$-norm.
	\end{remark}
	
	\begin{proof}
		Since the coefficients $a_m$ have polynomial decay,
		\eqref{eq:random-phase-field} defines a random element of
		$\CD'(\T^n;\C)$.  Moreover, it is simple to verify that
		\[
		\tau_aX\eqdist X\;,\qquad
		X\eqdist-X\;,\qquad
		\CR X\eqdist X\;,
		\]
		for any $a\in\T^n$, so the symmetry conditions in \assu{ass:field} are satisfied.
		
		Once we show that $X$ satisfies \eqref{eq:SG-inequality} with $L^s$-norm for every $s>\frac{2n}{2n-\gamma}$, it follows that $X$ satisfies \assu{ass:field} by the same argument as in the proof of \cor{cor:GF-implies-field}.
		It thus remains to prove \eqref{eq:SG-inequality} with $L^s$-norm.
		Let $F(\xi)
		=
		\Phi\big(
		\langle\xi,\phi_1\rangle,\ldots,
		\langle\xi,\phi_N\rangle
		\big)$
		be a bounded cylindrical functional.
		For $M\ge1$, let
		\[
		X^M(x)
		=
		\sum_{\substack{m\in\Z^n\setminus\{0\}\\ |m|\le M}}
		a_m \mre^{\mri\theta_m}\mre^{2\pi\mri m\cdot x}\;.
		\]
		The uniform probability measure on the circle satisfies the Poincar\'e
		inequality
		\[
		\Var(f)
		\le
		\int_0^{2\pi}|f'(\theta)|^2
		\frac{\mrd\theta}{2\pi}\;.
		\]
		Tensorisation \cite[Proposition~4.3.1]{BakryGentilLedoux14} therefore yields
		\begin{equ}\label{eq:random-phase-Poincare}
			\Var(F(X^M))
			\le
			\sum_{\substack{m\neq0\\|m|\le M}}
			\E\big|\partial_{\theta_m}F(X^M)\big|^2\;.
		\end{equ}
		Denote
		\[
		v_M
		=
		\left.\frac{\delta F}{\delta\xi}\right|_{\xi=X^M}\;,
		\qquad
		v
		=
		\frac{\delta F}{\delta X}
		\in \CC^\infty(\T^n;\C)\;.
		\]
		Since
		$\partial_{\theta_m}X^M(x)
		=
		\mri a_m \mre^{\mri\theta_m} \mre^{2\pi\mri m\cdot x}
		$,
		the chain rule implies
		\[
		\partial_{\theta_m}F(X^M)
		=
		a_m
		\int_{\T^n}
		\left\langle
		v_M(x),
		\mri \mre^{\mri\theta_m}\mre^{2\pi\mri m\cdot x}
		\right\rangle_\C
		\mrd x\;,
		\]
		Consequently,
		\[
		\big|\partial_{\theta_m}F(X^M)\big|
		\le
		a_m |\widehat v_M(m)|\;,
		\]
		where $\widehat{f}(m) = \int_{\T^n} f(x) \mre^{-2\pi\mri m\cdot x} \mrd x$ denotes the Fourier transform.
		Hence
		\begin{equ}\label{eq:random-phase-Fourier-SG}
			\Var(F(X^M))
			\le
			\E
			\sum_{\substack{m\neq0\\|m|\le M}}
			a_m^2|\widehat v_M(m)|^2\;.
		\end{equ}
		Let $C$ be the periodic distribution with Fourier coefficients
		\[
		\widehat C(0)=0\;,
		\qquad
		\widehat C(m)
		=
		a_m^2
		=
		(1+|m|^2)^{-\frac{n-\gamma}{2}}\,,
		\quad m\neq0\;.
		\]
		Then
		\[
		\sum_{m\neq0}
		a_m^2|\widehat v(m)|^2
		=
		\langle v,C*v\rangle_{L^2(\T^n;\C)}\;.
		\]
		Up to constants and the zero Fourier mode, 
		$C$ is the periodic Bessel kernel of order $n-\gamma$ and satisfies
		\begin{equ}[eq:C_integrable]
			|C(x)|
			\lesssim
			|x|^{-\gamma}\;,
			\qquad x\neq0\;.
		\end{equ}
		This follows, for example, from the representation
		\[
		(1+|m|^2)^{-\frac{n-\gamma}{2}}
		=
		\frac1{\Gamma((n-\gamma)/2)}
		\int_0^\infty
		t^{\frac{n-\gamma}{2}-1}
		\mre^{-t} \mre^{-|m|^2t}\,\mrd t
		\]
		together with the Fourier representation of the periodic heat kernel and the bound
		\[
		G_t(x)
		\lesssim
		t^{-n/2}
		\exp\left(-c\frac{|x|^2}{t}\right)\;,
		\qquad t\in (0,1]\;.
		\]
		We have $X^M\to X$ in $\CD'(\T^n;\C)$ as $M\to\infty$, thus
		$F(X^M)\to F(X)$ and $v_M\to v$ almost surely.
		Moreover, the numbers $\scal{X^M,\phi_i}$ and $\scal{X,\phi_i}$ all lie in a sufficiently large interval in $\R$, uniformly in $M,i$, due to the boundedness of the coefficients $a_m \mre^{\mri\theta_m}$ in \eqref{eq:random-phase-field} and smoothness of $\phi_i$.
		Boundedness of
		$\Phi$ and its derivatives on any ball allows passage to the limit in \eqref{eq:random-phase-Fourier-SG} by dominated convergence, yielding
		\begin{equ}\label{eq:random-phase-SG-K}
			\Var(F(X))
			\le
			\E\langle v,C*v\rangle_{L^2(\T^n;\C)}\;.
		\end{equ}
		Since $C\in L^r(\T^n)$ for every $r<\frac n\gamma$ due to \eqref{eq:C_integrable},
		the rest of the proof proceeds exactly like that of \prop{prop:GF}, with \eqref{eq:random-phase-SG-K} replacing the use of \lem{lem:SG-for-Gaussians}.
	\end{proof}
	
	\section{Embedding and convolution estimates}\label{app:Besov}
	
	Recall the notation from Section~\ref{sec:notation} concerning function spaces $\CC^\eta$.
	
	\begin{lemma}\label{lem:convolution-estimate}
		Let $\eta\leq 0$ and $\kappa\in[0,1]$ such that $\eta+\kappa\leq 0$.
		Let $\chi^\e$ be a standard mollifier on $\T^n$ at scale $\e\in(0,1]$ and define $h^\e = h * \chi^\e$ and denote $h^0=h$. Then, uniformly in $\e,\bar\e\in[0,1]$,
		\begin{equ}
			|h^\e - h^{\bar\e}|_{\CC^\eta}
			\lesssim
			|h|_{\CC^{\eta+\kappa}} |\e-\bar\e|^\kappa\;.
		\end{equ}
	\end{lemma}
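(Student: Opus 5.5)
By interpolation it suffices to treat the endpoints $\kappa=0$ and $\kappa=1$. For general $\kappa\in[0,1]$ we write $|h^\e-h^{\bar\e}|_{\CC^\eta}\le \min\{2C|h|_{\CC^\eta},\,C|\e-\bar\e|\,|h|_{\CC^{\eta+1}}\}$. However, since $|h|_{\CC^{\eta+\kappa}}$ rather than a geometric mean of norms appears on the right, I would instead work directly at the level of test functions, which avoids any genuine interpolation-space theory.

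Fix $\phi\in\CB^r$, $x\in\T^n$, $\lambda\in(0,1]$, and write
\[
\scal{h^\e-h^{\bar\e},\phi^\lambda_x}=\scal{h,(\chi^\e-\chi^{\bar\e})*\phi^\lambda_x},
\]
using that $\chi$ is even. The task is to show that $\psi\eqdef(\chi^\e-\chi^{\bar\e})*\phi^\lambda_x$ is, up to a constant multiple of $|\e-\bar\e|^\kappa\lambda^{-\kappa}$, a finite superposition of admissible test functions at scale comparable to $\lambda$ (or at scale $\lambda\vee\e\vee\bar\e$). Testing $h$ against such functions is then controlled by $|h|_{\CC^{\eta+\kappa}}\lambda^{\eta+\kappa}$. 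I split into two regimes.

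\textbf{Regime $\e\vee\bar\e\le\lambda$.} Here $\chi^\e*\phi^\lambda_x$ is, up to a constant, of the form $\tilde\phi^\lambda_x$ with $\tilde\phi$ supported in a slightly larger ball and with $\CC^r$ norm bounded uniformly; a fixed partition of unity reduces larger supports back to $\CB^r$. Moreover,
\[
\partial_\e(\chi^\e*\phi^\lambda_x)=\e^{-1}(\zeta^\e*\phi^\lambda_x),
\qquad \zeta(y)=-n\chi(y)-y\cdot\nabla\chi(y),
\]
and $\zeta^\e*\phi^\lambda_x=\e\,\lambda^{-1}\,\tilde\phi^\lambda_x$ for a bounded test function $\tilde\phi$. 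This uses $\int\zeta=0$: since $\zeta=\nabla\cdot(-y\chi)$, one derivative can be moved onto $\phi^\lambda$. Integrating in $\e$ gives $\psi=|\e-\bar\e|\lambda^{-1}\Phi^\lambda_x$ with $\Phi$ bounded in $\CC^r$. Combined with the trivial bound $\psi=O(1)\cdot(\text{test functions})$, this gives the factor $(|\e-\bar\e|/\lambda)^{\kappa}$, because $\min\{1,a\}\le a^\kappa$.

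\textbf{Regime $\e\vee\bar\e>\lambda$.} Write $\chi^\e*\phi^\lambda_x$ as a test function at scale $\e$; its $\CC^r$ norm at that scale is bounded since $\phi^\lambda$ has integral at most $1$ in absolute value. Then $|\scal{h,\chi^\e*\phi^\lambda_x}|\lesssim \e^{\eta+\kappa}|h|_{\CC^{\eta+\kappa}}$, and similarly for $\bar\e$. If $|\e-\bar\e|\ge(\e\vee\bar\e)/2$, this gives
\[
\lesssim (\e\vee\bar\e)^{\eta+\kappa}\le \lambda^{\eta}(\e\vee\bar\e)^{\kappa}\lesssim\lambda^\eta|\e-\bar\e|^\kappa,
\]
using $\eta\le0$ and $\e\vee\bar\e>\lambda$. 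Otherwise $\e\asymp\bar\e$, and the derivative argument above at scale $\e$, with $\zeta^{s}$ in place of the roles of $\phi^\lambda$ and using $\int\zeta=0$, yields $\psi=|\e-\bar\e|\e^{-1}\Phi^{\e}_x$. Interpolating with the trivial bound gives $(|\e-\bar\e|/\e)^\kappa\e^{\eta+\kappa}\le|\e-\bar\e|^\kappa\lambda^\eta$.

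The endpoint $\e=0$ or $\bar\e=0$ follows by continuity of the estimate. The main obstacle is the bookkeeping showing that the rescaled kernels (e.g.\ $\zeta^\e*\phi^\lambda$) are uniformly bounded multiples of admissible $\CB^r$ test functions at the claimed scale, including the support-enlargement and periodisation issues. I expect this to be routine but tedious, handled by a fixed finite covering and the standard equivalence of $\CC^\eta$ norms under changes of test-function class.
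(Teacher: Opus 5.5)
Your proof is correct, but it takes a different route from the paper.

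\textbf{The paper's route.} The paper never looks at the scale of the mollifier. It writes $h^\e-h^{\bar\e}=\int\chi(y)\,(\tau_{\e y}h-\tau_{\bar\e y}h)\,\mrd y$ and uses translation invariance of $|\cdot|_{\CC^\eta}$ to reduce everything to the translation estimate $|\tau_z h-h|_{\CC^\eta}\lesssim|z|^\kappa|h|_{\CC^{\eta+\kappa}}$ with $z=(\e-\bar\e)y$. It proves that estimate exactly as in your first regime:
\begin{itemize}
\item for $|z|\le\lambda$, the function $\tau_{-z}\phi^\lambda_x-\phi^\lambda_x$ is $|z|/\lambda$ times a test function at scale $\lambda$;
\item the trivial bound covers $|z|>\lambda$;
\item $\min\{1,|z|/\lambda\}\le(|z|/\lambda)^\kappa$ concludes.
\end{itemize}
Only $|z|\lesssim|\e-\bar\e|$ enters, so the relative sizes of $\e$, $\bar\e$ and $\lambda$ play no role. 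The paper therefore needs no case distinction and no divergence structure of $\partial_\e\chi^\e$, and it covers $\bar\e=0$ automatically.

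\textbf{Your route.} Your identity $\partial_\e\chi^\e=\e^{-1}\zeta^\e$, with $\zeta=\nabla\cdot(-y\chi)$, replaces the translation step. The price is the extra regime $\e\vee\bar\e>\lambda$, which you handle correctly by testing at scale $\e$. In exchange, you get an explicit description of $(\chi^\e-\chi^{\bar\e})*\phi^\lambda_x$ as a rescaled test function at its natural scale $\lambda\vee\e$. The paper's argument is shorter because it outsources all of this to one translation estimate.

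\textbf{Minor bookkeeping points.}
\begin{itemize}
\item After you move the derivative onto $\phi$, the resulting $\Phi$ is only uniformly bounded in $\CC^{r-1}$, not $\CC^r$. Convolving $\nabla\phi$ with $g^{s/\lambda}$, $g(y)=-y\chi(y)$, gives no uniform gain of smoothness as $s/\lambda\to0$. This is harmless: for $\kappa>0$ one has $r-1=-\floor{\eta}>-(\eta+\kappa)$, so such functions are still admissible for testing $h\in\CC^{\eta+\kappa}$ by the equivalence of test-function classes. For $\kappa=0$ only the trivial bound is needed. The paper's translation step loses the same derivative and is justified the same way.
\item In the case $\e>\lambda\ge\bar\e$, the $\bar\e$-term must be tested at scale $\lambda$, not $\bar\e$. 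This gives $\lambda^{\eta+\kappa}\le\lambda^\eta\e^\kappa$, so your conclusion there stands.
\item In the sub-case $\e\asymp\bar\e$, the property $\int\zeta=0$ is not actually needed.
\item Your opening reduction to the endpoints $\kappa\in\{0,1\}$ is not valid as stated, since $|h|_{\CC^{\eta+1}}$ may be infinite. You rightly abandon it; the interpolation has to be done pairing by pairing, as both you and the paper ultimately do.
\end{itemize}
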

	
	\begin{remark}
		Lemma~\ref{lem:convolution-estimate} actually holds for all $\eta\in\R$ and $\kappa\in[0,1]$,
		but we only use it for $\eta+\kappa\le 0$, so we focus on this restricted case for simplicity.
	\end{remark}
	
	\begin{proof}
		The case $\eta=0$ is trivial, so assume $\eta<0$.
		We first prove the translation estimate
		\begin{equation}\label{eq:translation-Holder}
			|\tau_z h-h|_{\CC^\eta}
			\lesssim
			|z|^\kappa |h|_{\CC^{\eta+\kappa}}\;,
		\end{equation}
		where we recall $\tau_z h(x)=h(x-z)$.
		It suffices to consider $|z|\leq 1$.
		
		We recall that in the definition \eqref{eq:CC_eta_def} of $\CC^\eta$, one may equivalently require the test functions to have support in any fixed ball and have any fixed
		number of bounded derivatives sufficiently large compared with $|\eta|$.
		
		Fix an admissible test function $\phi_x^\lambda$,
		$\lambda\in(0,1]$.
		Write
		\begin{equation}\label{eq:h_translation}
			\langle\tau_z h-h,\phi_x^\lambda\rangle
			=
			\langle h,\tau_{-z}\phi_x^\lambda-\phi_x^\lambda\rangle\;.	
		\end{equation}
		If $|z|\le\lambda$, then the test function on the right is a rescaling at
		scale $\lambda$ of $\phi(\,\cdot-z/\lambda)-\phi$, whose admissible test-function
		norm is bounded by a constant times $|z/\lambda|\leq 1$,
		so the right-hand side of \eqref{eq:h_translation} is bounded by $\lesssim \lambda^{\eta+\kappa-1}|z| |h|_{\CC^{\eta+\kappa}}$.
		Moreover, in all cases, $\big| \langle\tau_z h-h,\phi_x^\lambda\rangle\big| \leq 2\lambda^{\eta+\kappa}|h|_{\CC^{\eta+\kappa}}$ by the triangle inequality.
		Therefore
		\[
		\big|
		\langle\tau_z h-h,\phi_x^\lambda\rangle
		\big|
		\lesssim
		\lambda^{\eta+\kappa}
		\min\{1,|z|/\lambda\}
		|h|_{\CC^{\eta+\kappa}}\;.
		\]
		Using $\min\{1,r\} \leq r^\kappa$ for $r\ge0$ and $\kappa\in[0,1]$, we obtain \eqref{eq:translation-Holder}.
		
		We now write
		\begin{equation}\label{eq:mollifier-translation}
			h^\e-h^{\bar\e}
			=
			\int_{\R^n}\chi(y)
			\big(
			\tau_{\e y}h-\tau_{\bar\e y}h
			\big)\,\mrd y
		\end{equation}
		and obtain
		\[
		|h^\e-h^{\bar\e}|_{\CC^\eta}
		\le
		\int_{\R^n}
		|\chi(y)|
		\,
		|\tau_{(\e-\bar\e)y}h-h|_{\CC^\eta}
		\,\mrd y
		\lesssim
		|\e-\bar\e|^\kappa
		|h|_{\CC^{\eta+\kappa}}\;,
		\]
		where we used \eqref{eq:mollifier-translation} and translation invariance of the $\CC^\eta$-norm in the first bound and 
		\eqref{eq:translation-Holder} in the second bound.
	\end{proof}
	
	\begin{lemma}\label{lem:L-HB-embedding}
		Let $p \in [1,\infty)$. Then
		$L^{p}(\T^n) \hookrightarrow \CC^{-n/p}(\T^n)$.
	\end{lemma}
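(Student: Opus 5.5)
We prove $|h|_{\CC^{-n/p}}\lesssim |h|_{L^p}$ by testing $h$ directly against the functions in the definition \eqref{eq:CC_eta_def}. By density of smooth functions in $L^p$ and lower semicontinuity of the $\CC^\eta$ seminorm under distributional convergence, it suffices to treat $h\in\CC^\infty(\T^n)$. Set $\eta=-n/p<0$ and $r=-\floor{\eta}+1$. Fix $\phi\in\CB^r$, $x\in\T^n$ and $\lambda\in(0,1]$; we need
\[
|\scal{h,\phi^\lambda_x}|\lesssim \lambda^{-n/p}|h|_{L^p}
\]
uniformly in these parameters.

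\textbf{Step 1: H\"older.} Let $p'$ be the conjugate exponent, with $p'=\infty$ when $p=1$. H\"older's inequality on $\T^n$ gives
\[
|\scal{h,\phi^\lambda_x}|\le |h|_{L^p}\,|\phi^\lambda_x|_{L^{p'}}.
\]

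\textbf{Step 2: scaling the test function.} Since $\phi$ is supported in $\{|z|<\frac14\}$ and $\lambda\le1$, the periodic extension of $\phi^\lambda_x$ has no overlapping copies. Its $L^{p'}(\T^n)$ norm therefore equals the $L^{p'}(\R^n)$ norm of $\lambda^{-n}\phi((\cdot-x)/\lambda)$. The change of variables $z=(y-x)/\lambda$ gives
\[
|\phi^\lambda_x|_{L^{p'}}=\lambda^{-n+n/p'}|\phi|_{L^{p'}}=\lambda^{-n/p}|\phi|_{L^{p'}}.
\]
The same formula holds for $p'=\infty$.

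\textbf{Step 3: uniform bound on $\phi$.} Because $|\phi|_{\CC^r}\le1$ and $\phi$ has support in a fixed ball, $|\phi|_{L^{p'}}\lesssim |\phi|_\infty\le 1$, uniformly over $\CB^r$.

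\textbf{Conclusion.} Combining the three steps gives $\lambda^{n/p}|\scal{h,\phi^\lambda_x}|\lesssim |h|_{L^p}$. Taking the suprema in \eqref{eq:CC_eta_def} yields the embedding.

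There is no real obstacle here. The one point to check is that periodisation does not change the $L^{p'}$ norm, and this follows from the support condition $|z|<\frac14$ together with $\lambda\le 1$.
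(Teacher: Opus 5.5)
Your proof is correct and follows the paper's argument: H\"older's inequality, then the scaling identity $|\phi^\lambda_x|_{L^{p'}(\T^n)}=\lambda^{-n/p}|\phi|_{L^{p'}(\R^n)}$, which relies on the small support of $\phi$, then a uniform bound on $|\phi|_{L^{p'}}$. Your preliminary density and lower-semicontinuity reduction is harmless but unnecessary, since H\"older applies directly to $h\in L^p$ paired with the smooth test function $\phi^\lambda_x$.
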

	
	\begin{proof}
		Let $f \in L^p(\T^n)$ and denote $\eta = -n/p$ and $r=-\floor{\eta}+1$.
		Then uniformly in $\phi\in\CB^r$, $x\in\T^n$ and $\lambda\in (0,1]$,
		\[
		|\scal{f,\phi^\lambda_x}|
		\leq
		|f|_{L^p}
		|\phi^\lambda_x|_{L^{p'}(\T^n)}
		\lesssim
		\lambda^{-n/p}|f|_{L^p}\;,
		\]
		where $p'$ is the H\"older conjugate of $p$
		and the first bound follows by H\"older's inequality while the second follows from $|\phi|_{L^{p'}(\R^n)}\lesssim 1$ and the scaling property
		$|\phi^\lambda_x|_{L^{p'}(\T^n)} = \lambda^{-n/p}|\phi|_{L^{p'}(\R^n)}$ due to the small support of $\phi$.
	\end{proof}

	\endappendix

	\subsection*{Acknowledgements}

I.C. acknowledges support from the European Research Council (ERC) via the Starting Grant SQGT 101116964.
H.M. was supported by an Engineering and Physical Sciences Research Council (EPSRC) doctoral studentship.
\medskip

\noindent
For the purpose of open access, the authors have applied a CC BY public copyright licence to any author accepted manuscript arising from this submission.

	\bibliographystyle{Martin}
	\bibliography{./refs}
	
\end{document}